\documentclass[dvipsnames,reqno,10pt]{amsart}

\usepackage{amsmath, amssymb ,amsthm, amsfonts, amsgen, color}
\usepackage{bbm}
\usepackage{tikz}
\usepackage{subcaption}
\usetikzlibrary{intersections,calc,patterns}
\usepackage[colorlinks=true, linkcolor=blue, citecolor=blue]{hyperref}

\definecolor{mix}{rgb}{0.6, 0.745, 0.4}

\numberwithin{equation}{section}

\setlength{\textheight}{22cm} \setlength{\textwidth}{16cm}
\setlength{\parindent}{0.5cm} \setlength{\topmargin}{0.5cm}
\setlength{\oddsidemargin}{0cm} \setlength{\evensidemargin}{0cm}

\newcommand{\R}{{\mathbb{R}}}

\newcommand{\beq}{\begin{equation}}
\newcommand{\eeq}{\end{equation}}

\newcommand{\weakly}{\rightharpoonup}

\newcommand{\Ical}{{\mathcal{I}}}

\newcommand{\Scal}{{\mathcal{S}}}

\newcommand{\dd}{{\,\rm d}}

\newcommand{\Acal}{{\mathcal{A}}}
\newcommand{\Ystiff}{Y_{\rm stiff}}
\newcommand{\Zstiff}{Z_{\rm stiff}}
\newcommand{\Ysoft}{Y_{\rm soft}}
\newcommand{\Ysoftr}{Y_{\rm soft}^r}
\newcommand{\eYstiff}{\eps Y_{\rm stiff}}
\newcommand{\jYstiff}{\eps_j Y_{\rm stiff}}
\newcommand{\eYsoft}{\eps Y_{\rm soft}}

\newcommand{\eps}{ \varepsilon}
\newcommand{\Z}{{\mathbb{Z}}}
\newcommand{\N}{{\mathbb{N}}}
\newcommand{\ffi}{\varphi}
\newcommand{\one}{\mathbbm{1}}
\newcommand{\ui}[1]{^{(#1)}}
\newcommand{\qc}{{\rm qc}}

\newcommand{\Wstiff}{W_{\rm stiff}}
\newcommand{\Wrig}{W_{\rm rig}}
\newcommand{\Wsoft}{W_{\rm soft}}
\newcommand{\Whom}{W_{\rm hom}}
\newcommand{\Wcell}{W_{\rm cell}}
\newcommand{\eWstiff}{W_{\rm stiff, \eps}}
\DeclareMathOperator{\Id}{Id}
\DeclareMathOperator{\Tr}{Tr}

\newcommand{\vertiii}[1]{{\left\vert\kern-0.25ex\left\vert\kern-0.25ex\left\vert #1 
\right\vert\kern-0.25ex\right\vert\kern-0.25ex\right\vert}}

\newcommand*\circled[1]{\tikz[baseline=(char.base)]{
            \node[shape=circle,draw,inner sep=2pt] (char) {#1};}}

\def\x{{\times}}


\makeatletter
\def\rightharpoonupfill@{\arrowfill@\relbar\relbar\rightharpoonup}

\newcommand{\xrightharpoonup}[2][]{\ext@arrow
0359\rightharpoonupfill@{#1}{#2}} \makeatother


\newtheoremstyle{thmlemcorr}{10pt}{10pt}{\itshape}{}{\bfseries}{.}{10pt}{{\thmname{#1}\thmnumber{ #2}\thmnote{ (#3)}}}
\newtheoremstyle{thmlemcorr*}{10pt}{10pt}{\itshape}{}{\bfseries}{.}\newline{{\thmname{#1}\thmnumber{ #2}\thmnote{ (#3)}}}
\newtheoremstyle{defi}{10pt}{10pt}{\itshape}{}{\bfseries}{.}{10pt}{{\thmname{#1}\thmnumber{ #2}\thmnote{ (#3)}}}
\newtheoremstyle{remexample}{10pt}{10pt}{}{}{\bfseries}{.}{10pt}{{\thmname{#1}\thmnumber{ #2}\thmnote{ (#3)}}}
\newtheoremstyle{ass}{10pt}{10pt}{}{}{\bfseries}{.}{10pt}{{\thmname{#1}\thmnumber{ A#2}\thmnote{ (#3)}}}

\theoremstyle{thmlemcorr}
\newtheorem{theorem}{Theorem}
\numberwithin{theorem}{section}
\newtheorem{lemma}[theorem]{Lemma}
\newtheorem{corollary}[theorem]{Corollary}
\newtheorem{proposition}[theorem]{Proposition}

\theoremstyle{thmlemcorr*}
\newtheorem{theorem*}{Theorem}
\newtheorem{lemma*}[theorem]{Lemma}
\newtheorem{corollary*}[theorem]{Corollary}
\newtheorem{proposition*}[theorem]{Proposition}
\newtheorem{problem*}[theorem]{Problem}
\newtheorem{conjecture*}[theorem]{Conjecture}

\theoremstyle{defi}

\theoremstyle{remexample}

\theoremstyle{remexample}

\newenvironment{remark}
  {\pushQED{\qed}\remarkx}
  {\popQED\endremarkx}


\newcommand\restrict[1]{\raisebox{-.5ex}{$|$}_{#1}}
\renewcommand{\epsilon}{\varepsilon}
\newcommand{\norm}[1]{\|#1\|}
\DeclareMathOperator{\SO}{SO}

\DeclareMathOperator{\dist}{dist}

\def\Xint#1{\mathchoice
{\XXint\displaystyle\textstyle{#1}}%
{\XXint\textstyle\scriptstyle{#1}}%
{\XXint\scriptstyle\scriptscriptstyle{#1}}%
{\XXint\scriptscriptstyle\scriptscriptstyle{#1}}%
\!\int}
\def\XXint#1#2#3{{\setbox 0=\hbox{$#1{#2#3}{\int}$}
\vcenter{\hbox{$#2#3$}}\kern-.5\wd0}}
\def\dashint{\Xint-}
\newcommand{\qand}{\quad\text{and}\quad}


\title[A variational perspective on auxetic metamaterials]{A variational perspective on auxetic metamaterials of checkerboard-type}

\author{Wolf-Patrick D\"ull}
\address{Institut f\"ur Analysis, Dynamik und Modellierung, Universit\"at Stuttgart, Pfaffenwaldring 57, 70569 Stuttgart, Germany}
\email{duell@mathematik.uni-stuttgart.de}

\author{Dominik Engl}
\address{Mathematisch-Geographische Fakult\"at, Katholische Universit\"at Eichst\"att-Ingolstadt, Ostenstra{\ss}e 28, 85071 Eichst\"att, Germany}
\email{dominik.engl@ku.de}

\author{Carolin Kreisbeck}
\address{Mathematisch-Geographische Fakult\"at, Katholische Universit\"at Eichst\"att-Ingolstadt, Ostenstra{\ss}e 28, 85071 Eichst\"att, Germany}
\email{carolin.kreisbeck@ku.de}

\begin{document}

\maketitle

\begin{abstract}

The main result of this work is a homogenization theorem via variational convergence for elastic materials with stiff checkerboard-type heterogeneities under the assumptions of physical growth and non-self-interpenetration. While the obtained energy estimates are rather standard, determining the effective deformation behavior, or in other words, characterizing the weak Sobolev limits of deformation maps whose gradients are locally close to rotations on the stiff components, is the challenging part. To this end, we establish an asymptotic rigidity result, showing that, under suitable scaling assumptions, the attainable macroscopic deformations are affine conformal contractions. This identifies the composite as a mechanical metamaterial with a negative Poisson's ratio. Our proof strategy is to tackle first an idealized model with full rigidity on the stiff tiles to acquire insight into the mechanics of the model and then transfer the findings and methodology to the model with diverging elastic constants. The latter requires, in particular, a new quantitative geometric rigidity estimate for non-connected squares touching each other at their vertices and a tailored Poincar\'e type inequality for checkerboard structures.

\medskip

\noindent\textsc{MSC (2020):} 49J45 (primary); 35B40, 74E30, 74Q20

\medskip
 
\noindent\textsc{Keywords:} Asymptotic analysis; quantitative rigidity; homogenization; auxetic metamaterials; composites

\medskip
 
\noindent\textsc{Date:} \today.
\end{abstract}

\section{Introduction}

When speaking of metamaterials, one usually refers to engineered and artificially fabricated materials tailored to show specific desirable properties that are rare to find naturally. Among the many different types of metamaterials are electrical, magnetic, acoustic, and mechanical. We focus here on the latter, specifically on those characterized by a negative Poisson's ratio, meaning a positive ratio of transversal and axial strains, which are called auxetic. In contrast to standard materials, like a piece of rubber, they respond to stretching in uniaxial direction by thickening in the direction orthogonal to the applied force. Among the special characteristics of auxetics are 
enhanced shear moduli, increased fracture resistance, and higher shock absorption capacity, which renders them beneficial for numerous industrial applications.
Even though the roots of auxetics are reported to date back already to the 1920s~\cite{Voi28}, the topic started to attract increased attention in the materials science and engineering communities only decades later, when Lakes \cite{Lak87} was the first to manufacture foams with negative Poisson's ratio in 1987. Several mechanisms have since been presented in the literature that give rise to auxetic material behavior, for instance, re-entrant~honeycomb or bow tie structures in~\cite{ENHR91} (where also the term 'auxetic' from the ancient Greek word for 'stretchable' was coined), multiscale laminates in~\cite{Mil92} and, most relevant for this work, rotating rigid squares connected by hinges at the vertices, as introduced in \cite{GrE00} by Grima \& Evans. Also other rigid building blocks, such as triangles \cite{GCMACGE12,GrE06} or rectangles \cite{GAE04, GGAE05, GMA11}, have been used by these (and other co-) authors to produce a negative Poisson's ratio. More recently, there are thrusts of combing several geometric arrangements at the microscale to design state-of-the-art materials with unique characteristics~\cite{DMUK22,Mil13}. For more on the subject, we refer to the review article~\cite{GGLR11} and the references therein.
Compared with the research activities in the mechanics disciplines, the coverage of auxetic structures from the standpoint of mathematics seems rather  sporadic. The works by Borcea \& Streinu, e.g., \cite{BoS15, BoS18}, approach the problem by recoursing to algebraic geometry. They investigate what crystalline and artificial structures give rise to auxetic behavior and devise design principles, based on their earlier graph-theoretical papers on the deformation of periodic frameworks with rigid edges~\cite{BoS10}.

This paper contributes to the mathematical theory of auxetic metamaterials, approaching the problem from a new perspective, namely that of 
asymptotic variational analysis.
We study a class of composites in a two-dimensional setting of nonlinear elasticity that show a small-scale pattern of stiff and soft tiles arranged into a checkerboard structure as illustrated in Figure~\ref{fig:checkerboard} (cf.~also~\cite{GrE00, KoV13}). Working with a variational model (detailed in Section~\ref{subsec:setup} below), the task is to rigorously determine the effective material behavior and, in particular, characterize the attainable macroscopic deformations along with their energetic cost. To this end, we resort to homogenization via $\Gamma$-convergence (for a general introduction to $\Gamma$-convergence, see~\cite{Bra05,Dal93}). Our main result (Theorem~\ref{theo:homogenization_elastic_intro}) is a homogenization result that is non-standard compared to classical papers like~\cite{Bra85, Mul87} and the works on high-contrast media~\cite{ChC12, CCN17, DGP22, DKP21}. Instead, it can be interpreted in the context of asymptotic rigidity statements for other reinforcing elements like layers~\cite{ChK17, ChK20, DFK21} and  fibers~\cite{EKR22}.  This means, generally speaking, that the models are governed by an interesting interplay between the specific geometric pattern of the heterogeneities and a strong contrast in the elasticity constants, which leads to global effects and overall, to  a strongly restricted macroscopic material response. For the checkerboard composites under consideration, we prove in a suitable scaling regime between the stiffness and length scale parameters that the macroscopic deformations are given by affine maps describing conformal contraction, confirming a negative Poisson's ratio. 
Since the identified effective behavior coincides with that of an idealized version of the model with fully rigid elasticity of the stiff components (Theorem \ref{theo:homogenization_rigid}), one may also view our main theorem as part of a robustness analysis, which is significant with a view to the practical applicability of these metamaterials.  
As a closely related issue relevant for the manufacturing process,  which is, however, beyond the scope of this work, is a solid understanding of the sensitivity of imperfections and perturbations in the geometry of the small-scale structures. 
Further interesting research directions, which we plan to address in future work, include the study of metamaterial with rotating triangle structures and the optimal design of the stiff components. 
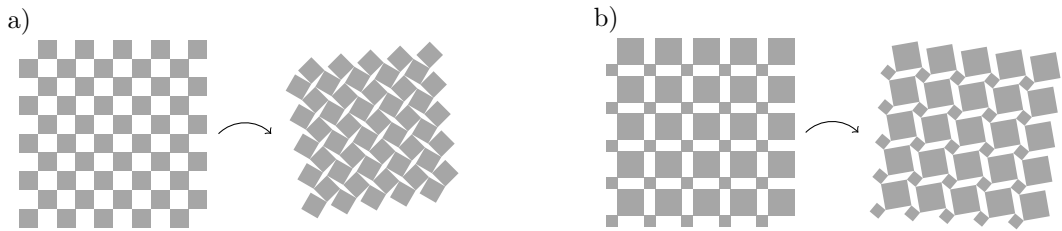
\begin{figure}
	\centering
	\begin{subfigure}{.48\linewidth}
		\begin{tikzpicture}[scale=.5]
			\def\glob{-30}
			\def\angle{75}
			\def\length{0.5}
			\pgfmathsetmacro\lengthb{1-\length}
			
			\draw (0,5.5) node {a)};
			\foreach \x in {0,...,4}{
				\foreach \y in {0,...,4}{
					\fill[black!35!white] (\x,\y) rectangle ($(\x,\y)+(\length,\length)$);  			
					\fill[black!35!white] ($(\x,\y)+(\length,\length)$) rectangle ($(\x,\y)+(1,1)$);	
			}}
			\draw [->] (5.3,2.5) to [out=45,in=135] (6	.7,2.5);
			\begin{scope}[shift = {(7.5,.5)}, rotate = {\glob}]
			\foreach \x in {0,...,4}{
				\foreach \y in {0,...,4}{
					\begin{scope}[shift={($(\length*\x,\length*\y) + (\angle:\x*\lengthb) + (\angle+90:\y*\lengthb)$)}]	
						\fill[black!35!white] (0,0) rectangle (\length,\length);						
						\begin{scope}[shift={($(\length,\length)-(\angle:\length) - (\angle+90:\length)$)}, rotate={\angle}]	
							\fill[black!35!white] (\length,\length) rectangle (1,1);					
						\end{scope}
					\end{scope}
			}}
			\end{scope}
		\end{tikzpicture}
	\end{subfigure}
	\begin{subfigure}{.48\linewidth}
		\begin{tikzpicture}[scale=.5]
			\def\glob{-40}
			\def\angle{50}
			\def\length{0.3}
			\pgfmathsetmacro\lengthb{1-\length}
		
			\draw (0,5.5) node {b)};
			\foreach \x in {0,...,4}{
				\foreach \y in {0,...,4}{
					\fill[black!35!white] (\x,\y) rectangle ($(\x,\y)+(\length,\length)$);  			
					\fill[black!35!white] ($(\x,\y)+(\length,\length)$) rectangle ($(\x,\y)+(1,1)$);	
			}}
			\draw [->] (5.3,2.5) to [out=45,in=135] (6	.7,2.5);
			\begin{scope}[shift = {(7,.4)}, rotate = {\glob}]
			\foreach \x in {0,...,4}{
				\foreach \y in {0,...,4}{
					\begin{scope}[shift={($(\length*\x,\length*\y) + (\angle:\x*\lengthb) + (\angle+90:\y*\lengthb)$)}]	
						\fill[black!35!white] (0,0) rectangle (\length,\length);						
						\begin{scope}[shift={($(\length,\length)-(\angle:\length) - (\angle+90:\length)$)}, rotate={\angle}]	
							\fill[black!35!white] (\length,\length) rectangle (1,1);					
						\end{scope}
					\end{scope}
			}}
			\end{scope}
		\end{tikzpicture}
	\end{subfigure}
	\caption{Illustrations of the auxetic deformation behavior of checkerboard-type composites with differently sized stiff squares (colored in gray).}\label{fig:checkerboard}
\end{figure}

\subsection{Setup of the problem}\label{subsec:setup}
Let $\Omega\subset \R^2$ be a bounded Lipschitz domain that models the reference configuration of a two-dimensional elastic body.   Deformations of that body are described by maps $u:\Omega\to \R^2$, which - unless mentioned otherwise - are taken to lie in $W^{1,p}(\Omega;\R^2)$ with $p>2$, and are thus, in particular, continuous by Sobolev embedding; note that some of our results also extend to $p=2$. 
We generally require our deformations to be orientation preserving, meaning with positive Jacobi-determinant almost everywhere, and forbid self-interpenetration of matter by imposing the Ciarlet-Ne\v{c}as condition~\cite{CiN87},
\begin{align}\label{ciarlet_necas} 
	\int_\Omega |\det \nabla u| \dd x \leq |u(\Omega)|,\tag{CN}
\end{align}
which corresponds to injectivity of $u$ a.e. in $\Omega$; for more on the topic of global invertibility of Sobolev maps, we refer, for instance, to the classical works~\cite{Bal81, MuS95} or to~\cite{BHM20, HMO21, Kro20} for some recent developments. 
With these assumptions, we introduce the class of admissible deformations as
\begin{align}\label{Acal}
	\Acal=\{u\in W^{1,p}(\Omega;\R^2): \det \nabla u>0 \text{ a.e.~on $\Omega$ and $u$ satisfies \eqref{ciarlet_necas}}\}.
\end{align}

Next, we formalize the geometry of the material heterogeneities, arranged in a checkerboard-like fashion. To this end, the periodicity cell $Y=(0,1]^2$ is subdivided into four tiles, precisely,
\begin{align}\label{tiles}
	Y_1=(0,\lambda]^2,\  Y_2=(0, \lambda]\times (\lambda, 1], \ Y_3 = (\lambda,1]^2,\  Y_4=(\lambda, 1]\times (0, \lambda]
\end{align}
for a given parameter $\lambda\in (0,1)$, and we define
\begin{center}
	$\Ystiff = Y_1\cup Y_3$ \quad and \quad $\Ysoft=Y_2\cup Y_4$, 
\end{center}
so that $Y = \Ystiff \cup \Ysoft$.
\begin{figure}
	\begin{tikzpicture}
		\def\length{.6}
		\pgfmathsetmacro\lengthb{1-\length}
		\begin{scope}[scale=3]
			\draw (0,0) rectangle (1,1);
			\draw [fill=black!25!white] (0,0) rectangle (\length,\length);
			\draw [fill=black!25!white] (\length,\length) rectangle (1,1);
			\draw (\length/2,\length/2) node {$Y_1$};
			\draw (\length/2,{\length+\lengthb/2}) node {$Y_2$};
			\draw ({\length+\lengthb/2},{\length+\lengthb/2}) node {$Y_3$};
			\draw ({\length+\lengthb/2},\length/2) node {$Y_4$};
			\draw (0,1) node [anchor=south east] {$Y$};
			\draw [<->] (0,-.1) --++(\length/2,0) node[anchor=north] {$\lambda$} --++ (\length/2,0); 
			\draw [<->] (-.1,0) --++(0,\length/2) node[anchor=east] {$\lambda$} --++ (0,\length/2);
			\draw [<->] (1.1,0) --++(0,.5) node[anchor=west] {$1$} --++ (0,.5);
			\draw (-.25,1) node [anchor=east] {$a)$};
			\draw (1.5,1) node [anchor=west] {$b)$};
		\end{scope}
		\begin{scope}[shift={(5,1)},scale=.8]
			\draw (0,0) to [out=90,in=180] (2,2) to [out=0,in=200] (3,2.25) to [out=20,in=225] (4,3) to [out=45, in=135] (6,3) to [out=-45, in=90] (6.3,2) to [out=-90,in=45] (5,0) to [out=225,in=0] (2,-1) to [out=180,in=-90] (0,0);
			\draw [<->](1.6,-1.2)--++(.4,0) node[anchor=north] {$\eps$} --++(.4,0);
			\draw (2,2.5) node {$\Omega$};
			\clip (0,0) to [out=90,in=180] (2,2) to [out=0,in=200] (3,2.25) to [out=20,in=225] (4,3) to [out=45, in=135] (6,3) to [out=-45, in=90] (6.3,2) to [out=-90,in=45] (5,0) to [out=225,in=0] (2,-1) to [out=180,in=-90] (0,0);
			
			\foreach \x in {-10,...,10}{
				\foreach \y in {-10,...,10}{
				\begin{scope}[scale=.8]
					\draw [fill=black!25!white] ($(0,0)+(\x,\y)$) rectangle ($(\length,\length)+(\x,\y)$);
					\draw [fill=black!25!white] ($(\length,\length)+(\x,\y)$) rectangle ($(1,1)+(\x,\y)$);
				\end{scope}
				}
			}
		\end{scope}
	\end{tikzpicture}
	\caption{a) The partition of the unit cell $Y$ into the four tiles $Y_1,\ldots,Y_4$ as in \eqref{tiles} with $\Ystiff=Y_1\cup Y_3$ colored in gray and $\Ysoft=Y_2\cup Y_4$ in white. b) The reference configuration $\Omega$ with its stiff components $\Omega\cap\eYstiff$ marked in gray.}\label{fig:setup}
\end{figure}
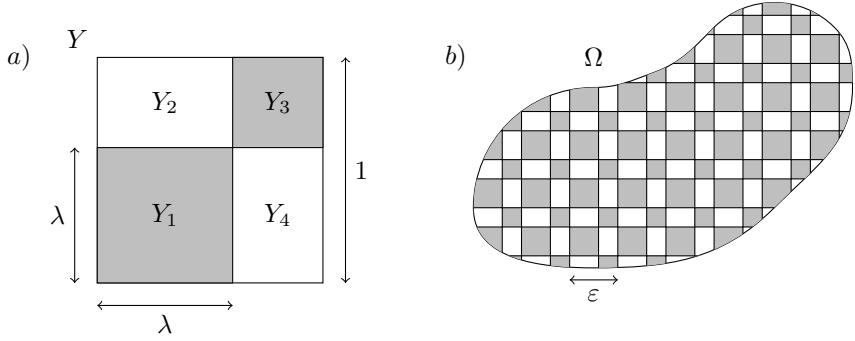
Note that, without further mentioning, the sets $\Ystiff, \Ysoft, Y_1, \ldots, Y_4$ will also be identified throughout with its $Y$-periodic extensions.
The stiff and soft components of the elastic body forming a periodic pattern at length scale $\eps>0$ are then described by the intersection of $\Omega$ with 
$\eYstiff$ and $\eYsoft$, respectively.  For an illustration of the geometric setup, we refer to Figure \ref{fig:setup}.

The material properties of the composite are modeled by the two energy elastic densities $\eWstiff$ and $\Wsoft$. 
On the stiff parts, we take $\eWstiff: \R^{2\times 2}\to [0, \infty]$ for $\eps>0$ as a continuous function such that
\begin{align}\label{Wrig}
	\begin{split}
		 \eWstiff = 0 \text{ on } \SO(2) \qand \frac{1}{c\eps^\beta} \dist^{p}(F,\SO(2))\leq \eWstiff(F)&\quad \text{ if } \det F >0,\\
		\eWstiff(F) = \infty &\quad \text{ if } \det F\leq 0,
	\end{split}
\end{align}
with a constant $c>0$ and a parameter $\beta>0$. While rotations do not cost any energy, deviations from $\SO(2)$ are energetically penalized with diverging elastic constants as $\eps$ tends to zero, i.e., the stiff material is asymptotically rigid. Qualitatively, this means that the stiff components become stiffer and stiffer as the length scale shrinks. The tuning parameter $\beta$ controls the degree of increasing stiffness and will be chosen later to be sufficiently large.

On the soft components, we consider a continuous function $\Wsoft:\R^{2\times 2}\to [0,\infty]$ that satisfies for $p\geq 2$,
\begin{align}\label{Wsoft}
	\begin{split}
		\frac{1}{c}|F|^p + \frac{1}{c}\theta(\det F) - c \leq \Wsoft(F) \leq c|F|^p + c\theta(\det F) + c\quad&\text{ if } \det F>0,\\
		\Wsoft(F) = \infty\quad&\text{ if } \det F\leq 0,
	\end{split}
\end{align}
where $c>0$  and $\theta:(0,\infty)\to[0,\infty)$ is a convex function such that
\begin{align*}
	\theta(st) \leq c(1+\theta(s))(1+\theta(t))\quad\text{ for all } s,t\in(0,\infty),
\end{align*}
cf.~also \cite[Equations (2.1), (2.2)]{CoD15}.

Merging the modeling assumptions introduced above gives rise to a variational problem with the elastic energy functional (defined on deformations with zero mean value)
\begin{align}\label{energy}
	\Ical_\eps: L^p_0(\Omega;\R^2)\to [0,\infty],\quad u\mapsto 
	\begin{cases}
		\displaystyle\int_\Omega W_\eps\big(\tfrac{x}{\eps},\nabla u(x)\big) \dd x	&\text{ for } u\in\Acal,\\
		\infty	&\text{ otherwise,}
	\end{cases}
\end{align}
with the inhomogeneous energy density
\begin{align}\label{density}
	W_\eps: \R^2\times \R^{2\times2}\to [0,\infty],\quad (y,F)\mapsto \Wsoft(F)\one_{\Ysoft}(y) +  \eWstiff(F) \one_{\Ystiff}(y). 
\end{align}
Hence, the observed deformations of the composite with checkerboard structure at scale $\eps$, correspond to minimal energy states of $\Ical_\eps$ - up to accounting for external forces, which we do not explicitly include here, as they can be handled via continuous perturbations. In the following, we focus on capturing the effective material behavior through the convergence of minimizers in the limit of vanishing length scale.

\subsection{The main result} With this setup at hand, we can now state the main contribution of this work, the following homogenization result by $\Gamma$-convergence for the elastic energies $(\Ical_\eps)_\eps$ as $\eps\to 0$. 
\begin{theorem}[Homogenization of checkerboard structures]\label{theo:homogenization_elastic_intro}
 Let $\Omega\subset \R^2$ be a bounded Lipschitz domain, $p \geq 2$, and let $\Ical_\eps$ for $\eps>0$ as in~\eqref{energy}, \eqref{Acal}, and \eqref{density} with $\eWstiff$ as in \eqref{Wrig} and $\Wsoft$ as in \eqref{Wsoft} such that $\Wsoft^\qc$ is polyconvex.
Then, the family of functionals $(\Ical_\eps)_\eps$ $\Gamma$-converges for $\eps\to 0$ with respect to the strong $L^p(\Omega;\R^2)$-topology to
\begin{align}\label{Gamma_limit}
	\Ical_{\rm hom}: L^p_0(\Omega;\R^2) \to [0,\infty],\ u\mapsto
		\begin{cases} 
			|\Omega| \Whom(F) & \text{if $\nabla u=F\in K$,}\\
			\infty & \text{otherwise,}
		\end{cases}
\end{align} 
where
\begin{align}\label{Klambda}
	K:=\{\lambda S + (1-\lambda)R : R,S\in \SO(2), Re_1\cdot Se_1 \geq 0\}=\{\alpha Q: \sqrt{|\Ystiff|}\leq \alpha\leq 1, Q\in \SO(2)\}
\end{align}
and the homogenized density is given for $F\in K$ by
\begin{align}\label{Whom}
	\Whom(F)=\frac{1}{2}|\Ysoft|\min_{R, S\in \SO(2), \lambda S+(1-\lambda)R=F, Re_1\cdot Se_1\geq 0}\big(\Wsoft^\qc(Se_1|Re_2) + \Wsoft^\qc(Re_1|Se_2)\big).
\end{align} 

Moreover, any sequence $(u_\eps)_\eps\subset L^p_0(\Omega;\R^2)$ with $\sup_{\eps}\Ical_\eps(u_\eps)<\infty$ has a subsequence that converges weakly in $W^{1,p}(\Omega;\R^2)$ to an affine function $u:\Omega\to \R^2$ with vanishing mean value and $\nabla u\in K$.
\end{theorem}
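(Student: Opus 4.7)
The proof follows the classical three-step architecture for $\Gamma$-convergence --- compactness, liminf inequality, and construction of a recovery sequence --- and is guided throughout by the idealized rigid model of Theorem~\ref{theo:homogenization_rigid}, which already establishes that fully rigid stiff components force macroscopic deformations into the conformal-contraction set $K$. The passage from the idealized to the asymptotically rigid model rests on the two new technical tools announced in the abstract: a quantitative geometric rigidity estimate adapted to the checkerboard geometry, where the stiff squares meet only at isolated vertices, and a tailored Poincar\'e-type inequality that allows the passage from piecewise rotation fields on the stiff component to a global control of the deformation.

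For compactness, let $(u_\eps)_\eps$ satisfy $\sup_\eps \Ical_\eps(u_\eps)<\infty$. The lower bound in \eqref{Wrig} yields the stiff-side estimate
\[
\int_{\Omega\cap \eYstiff}\dist^p(\nabla u_\eps,\SO(2))\,\dd x \leq C\eps^\beta.
\]
A Friesecke--James--M\"uller rigidity argument on each individual stiff square produces a rotation $R_\eps^k\in\SO(2)$ that approximates $\nabla u_\eps$ on that tile. Since $\Omega\cap\eYstiff$ is not connected, these rotations cannot be tied together by classical means, which is where the new checkerboard rigidity estimate enters: using the bounded soft-energy contribution and the kinematic constraints forced by the vertex contacts, it controls the oscillation between rotations on neighboring stiff squares. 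Combined with the checkerboard Poincar\'e inequality, this produces weak $W^{1,p}$-compactness of $(u_\eps)_\eps$ (up to subsequences, fixed by the zero-mean condition) and reduces the asymptotic two-scale structure to two rotation fields $R_\eps, S_\eps$ supported on the $Y_1$- and $Y_3$-tiles respectively. Transferring the rigid-case analysis of Theorem~\ref{theo:homogenization_rigid} to the asymptotically rigid model, the pair $(R_\eps,S_\eps)$ must converge to a single constant pair $(R,S)\in\SO(2)^2$ with $Re_1\cdot Se_1\geq 0$ (the orientation constraint being inherited from the Ciarlet--Ne\v{c}as condition), yielding the constant macroscopic gradient $F=\lambda S+(1-\lambda)R\in K$ and hence an affine limit $u$.

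For the liminf inequality, decompose the soft contribution according to the two families $Y_2$ and $Y_4$. The rigidity description ensures that $\nabla u_\eps$ is $L^p$-close on $\eps(k+Y_2)$-tiles to the matrix $(S_\eps e_1|R_\eps e_2)$ and on $\eps(k+Y_4)$-tiles to $(R_\eps e_1|S_\eps e_2)$, the frozen columns being enforced by boundary traces from the surrounding stiff tiles. A two-scale / periodic-unfolding argument, together with the weak lower semicontinuity of $v\mapsto\int \Wsoft^\qc(\nabla v)\,\dd x$ (valid since $\Wsoft^\qc$ is polyconvex), then produces
\[
\liminf_{\eps\to 0}\Ical_\eps(u_\eps) \geq \tfrac{1}{2}|\Ysoft|\,|\Omega|\,\bigl(\Wsoft^\qc(Se_1|Re_2)+\Wsoft^\qc(Re_1|Se_2)\bigr),
\]
and minimization over admissible $(R,S)$ compatible with $F=\lambda S+(1-\lambda)R$ returns $|\Omega|\,\Whom(F)$. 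For the limsup, fix $F\in K$ together with a minimizing pair $(R,S)$ and build a recovery sequence periodically: on stiff tiles prescribe $\nabla u_\eps=S$ on $Y_1$-type and $\nabla u_\eps=R$ on $Y_3$-type squares (the assignment dictated by the rigid-case geometry of Theorem~\ref{theo:homogenization_rigid}), and on the soft tiles interpolate via almost-optimal test fields for the quasiconvex envelopes $\Wsoft^\qc(Se_1|Re_2)$ and $\Wsoft^\qc(Re_1|Se_2)$, obtained from the standard infimum formula for the quasiconvex envelope. Boundary-layer corrections along the stiff/soft interfaces guarantee continuity of $u_\eps$, compliance with (CN) and the zero-mean condition, and negligible energetic cost.

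The main obstacle lies in the compactness step, specifically in establishing the quantitative rigidity estimate on the union of stiff squares with vertex-only connectivity. Standard Friesecke--James--M\"uller rigidity applies only to connected Lipschitz domains, and vertex contacts are exactly the degenerate configuration beyond its reach; the finite soft energy must therefore be used as the bridge that couples rotations on adjacent stiff cells. This coupling is simultaneously the mechanism that restricts macroscopic deformations to the narrow conformal-contraction set $K$, and it explains why the tuning parameter $\beta$ in \eqref{Wrig} must be chosen sufficiently large: the $\eps^\beta$ stiff penalty has to dominate the fluctuations permitted by the soft energy for the rigidity estimate and the tailored Poincar\'e-type inequality to combine into effective macroscopic compactness.
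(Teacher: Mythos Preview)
Your outline has the right three-step architecture, but there are genuine gaps in each step that diverge from what the paper actually does and that would not work as stated.

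\textbf{Compactness.} You write that the coupling of rotations on neighboring stiff squares is obtained ``using the bounded soft-energy contribution and the kinematic constraints forced by the vertex contacts,'' and later that ``the finite soft energy must therefore be used as the bridge that couples rotations on adjacent stiff cells.'' This is the central misconception. The paper's quantitative rigidity lemma for cross structures (Lemma~\ref{lem:luftschloss}) uses no soft energy whatsoever: it takes as input only $u\in W^{1,p}(E;\R^2)$ with $p>2$ (hence continuous), the Ciarlet--Ne\v{c}as condition on the union $E'$ of the four stiff squares, and $\delta=\norm{\dist(\nabla u,\SO(2))}_{L^p(E')}$. The coupling of opposite rotations comes from a geometric argument: continuity of $u$ pins the images of the four joint corners, and the approximate Ciarlet--Ne\v{c}as condition (Lemma~\ref{lem:approx_ciarlet}) rules out the non-parallelogram configurations by an overlap--area argument. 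The $\delta^{1/2}$ error arises from a Pythagoras-type bound on the intersection of a thin tube with an annulus. The soft energy enters only for the global $W^{1,p}$-bound on $u_\eps$, not for the rotation coupling.

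\textbf{Liminf.} You claim that ``$\nabla u_\eps$ is $L^p$-close on $\eps(k+Y_2)$-tiles to the matrix $(S_\eps e_1|R_\eps e_2)$.'' This is not available: the rigidity estimate controls $\nabla u_\eps$ only on the stiff tiles, and nothing forces closeness of the gradient on the soft tiles. The paper avoids this entirely. It writes $\Wsoft^\qc(F)=g(F,\det F)$ with $g$ convex and lower semicontinuous, applies Jensen's inequality to the averages $\dashint_{\Omega_4\cap\eps Y_i}(\nabla u_\eps,\det\nabla u_\eps)\,\dd x$, and then replaces $u_\eps$ by an extension $\tilde u_\eps=L(u_\eps|_{\Omega_2\cap\eYstiff})$ of its stiff-side values (Lemma~\ref{lem:extension2}). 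Since gradients and determinants are Null-Lagrangians, these averages are unchanged under this replacement, and for the extension one has the quantitative comparison with the piecewise affine $w_\eps$ via the checkerboard Poincar\'e inequality (Lemma~\ref{lem:poincare2}). Your two-scale/unfolding route does not supply the missing soft-tile control.

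\textbf{Limsup.} ``Almost-optimal test fields for the quasiconvex envelopes'' with ``boundary-layer corrections'' is not enough to guarantee $\det\nabla u_\eps>0$ and the Ciarlet--Ne\v{c}as condition simultaneously. The paper invokes the Conti--Dolzmann construction \cite[Theorem~2.1]{CoD15} precisely to obtain orientation-preserving near-minimizers of $\int\Wsoft$ with prescribed affine boundary values on each soft tile; global injectivity then follows from Ball's theorem because the underlying piecewise affine map is injective.
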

This theorem~shows rigorously that the effective behavior of materials with high-contrast checkerboard structures, governed by the variational problem with functional $\Ical_{\rm hom}$, is restricted to affine conformal contractions. Indeed, the macroscopically attainable deformations correspond exactly to the domain of the limit energy $\Ical_{\rm hom}$, which comprises all affine functions with zero mean value and whose gradients are suitable positive scalar multiples of rotation matrices. The latter implies that the Poisson's ratio of the composites under consideration is $-1$, which reflects their auxetic nature, see Remark~\ref{rem:Klambda} b). A comparison inspired by classical homogenization results like \cite{Bra85, Mul87} reveals that the homogenized density $\Whom$ coincides essentially (that is, up to maximal compressions) with the cell formula associated to the related model where the stiff tiles are fully rigid; we refer to Remark~\ref{rem:cell_formula} for more details. 
 
Further, two comments about the technical hypotheses in the previous theorem are in order.  
\begin{remark}
a) Note that the statement of Theorem~\ref{theo:homogenization_elastic_intro} is sensitive to the regularity of the admissible functions and fails for $p<2$.  The intuition is that the material can break up at the connecting joints between two stiff neighboring squares, when the deformations, here $W^{1,p}$-functions, can have discontinuities in isolated points, so that a large class of limit maps can be reached, cf.~Proposition~\ref{prop:anythingcanhappen}.
Interestingly, this observation about the critical role of the integrability parameter is in contrast to related homogenization results for materials with strict soft inclusions~\cite{BrG95, DuG16} or layered materials \cite{ChK17, ChK20}, which are valid for any $p>1$.

\smallskip

b) The condition $\beta>2p-2$ on the tuning parameter emerges naturally from our approach (see Section~\ref{subsec:approach}), but it is currently not clear whether this scaling regime is optimal. Answering this question remains an interesting open problem. 
In particular,  we are not aware of explicit constructions of bounded energy sequences converging to limit deformations other than affine conformal contractions when $\beta\in (0, 2p-2)$.
\end{remark}

\subsection{Approach and methodology} \label{subsec:approach}
The stepping stone for our analysis is a solid understanding of the related model with rigid elasticity on the stiff tiles, which results formally by replacing the density $\eWstiff$ in~\eqref{density} by
\begin{align}\label{Wrig2}
	\Wrig(F) = 
		\begin{cases}
			0 &\text{ for } F\in \SO(2),\\
			\infty &\text{ otherwise, } 
		\end{cases}\qquad \text{$F\in \R^{2\times 2}$.}
\end{align}
Due to this stricter assumption, the macroscopically attainable deformations are easier to characterize, since
the possible deformations even for structures at scale $\eps>0$, that is, $u_\eps \in \Acal$ with 
\begin{align}\label{exactinclusion_intro}
	\nabla u_\eps \in \SO(2)\qquad \text{ on $\Omega\cap\eps\Ystiff$,}
\end{align}
which by well-known rigidity results (e.g.,~\cite{Res67}) is equivalent to $\nabla u_\eps$ coinciding with a single rotation on each connected component of $\eps\Ystiff$,
are rather limited. Indeed, each such $u_\eps$ can be characterized as the sum of a function that is piecewise affine on the tiles with at most four different gradients, uniquely determined by two rotations, and local modulations on each soft tile with a Sobolev function with zero boundary values (Corollary~\ref{cor:preservation}). 
This follows from  basic geometric  considerations that allow us to determine the rigid motions acting on the boundaries of the rigid components, while accounting for orientation preservation. 
The next step of identifying the weak $W^{1,p}$-limits of sequences $(u_\eps)_\eps$ is then standard (see Proposition~\ref{prop:K_lambda}) and yields the affine deformations with gradients in the set $K$ in~\eqref{Klambda} - so exactly the finite-energy states of the homogenized functional~\eqref{Gamma_limit}.

Besides the insight into the case with fully rigid components, we wish to highlight two technical ingredients that are substantial for the proof of Theorem~\ref{theo:homogenization_elastic_intro}. They are both 
embedded in a general proof strategy of asymptotic rigidity results (cf.~\cite{ChK20, DFK21, EKR22} and also~\cite{FJM02})
essential for expanding the observations on the asymptotic behavior of sequences $(u_\eps)_\eps\subset \Acal$ when the exact differential inclusion~\eqref{exactinclusion_intro} is weakened to the approximate version
\begin{align}\label{exactinclusion_intro}
	\int_{\Omega\cap \eps\Ystiff}\dist^{p}(\nabla u_\eps,\SO(2)) \leq C\eps^\beta
\end{align}
with a constant $C>0$. 

The first key tool is a quantitative rigidity estimate in the spirit of  the seminal work by Friesecke, James \& M\"uller \cite{FJM02} applicable to cross structures, as stated in Lemma~\ref{lem:luftschloss}; by an (unscaled) cross structure $E'$, we understand a non-connected open set contained in $\Ystiff$ consisting of the four stiff neighboring squares of a single soft rectangle.
If one applies~\cite[Theorem~3.1]{FJM02} individually to a function $u\in W^{1, p}(E';\R^2)$ restricted to each of the connected components of $E'$, this yields four potentially different rotation matrices close to the gradient of $u$. Lemma~\ref{lem:luftschloss} states that only two rotations are in fact enough. We prove by careful geometric arguments in combination with an approximate version of the non-interpenetration condition (see~Lemma~\ref{lem:approx_ciarlet}) that the rotations on opposite squares can be chosen identical while preserving suitable control on the error terms. More precisely, the $L^p$-error between the rotations and the gradients of $u$ is given terms of $\delta^{1/2}$ with $\delta=\norm{\dist(\nabla u, \SO(2))}_{L^p(E')}$; note that the square root is due to our technical approach and comes in through Pythagoras' theorem. For the scaling analysis associated with Lemma~\ref{lem:luftschloss}, we refer to Remark~\ref{rem:scaling}. 
 
The second tool is a Poincar\'e-type inequality with uniform constants for checkerboard structures, which has  -  in contrast to Lemma~\ref{lem:luftschloss} - a global character. Roughly speaking, we show that a function $u\in W^{1,p}(\Omega;\R^2)$ with vanishing mean value on the stiff parts $\Omega\cap\eps \Ystiff$ and the property that the values of $u$ in interior of $\Omega$ control $u$ also in a boundary layer, then the $L^p$-norm of $u$ can be estimated by $\norm{\nabla u}_{L^p(\Omega\cap\eps \Ystiff;\R^2)}$ multiplied with a constant independent of $\epsilon$; the precise statement can be found in~Lemma~\ref{lem:poincare2}.
Our proof is inspired by a classical extension result in the literature. Based on~\cite{ACDP92} by Acerbi, Chiad\`o Piat, Dal Maso \& Percivale, we derive an approximate extension result tailored for our purposes, which then allows us to mimic the usual indirect proof of Poincar\'e's inequality.
To handle the technicalities around the joints, where the sets of stiff tiles does not have Lipschitz boundary, we proceed in two steps. We first extend our functions partially from the stiff to the soft parts by standard reflection arguments, leaving out small balls around the corners, and then fill them via an extension according to~\cite{ACDP92}.

\subsection{Outline} This paper is organized as follows. Section~\ref{sec:rigid} is concerned with the analysis of the auxiliary model with full rigid tiles.
After establishing  the deformation behavior of the individual soft components on the local level in Section~\ref{sec:rigid_aux}, we characterize in Section~\ref{sec:rigid_deform} the set of attainable macroscopic deformations in terms of affine conformal contractions. The corresponding homogenization result via variational convergence, which gives rise to the effective energy $\Ical_{\rm hom}$ as $\Gamma$-limit, is proven in Section \ref{sec:rigid_hom2}. We conclude this first part of the paper in Section~\ref{sec:rigid_assumptions} with a detailed discussion of our various modeling assumptions, including the effects of requiring orientation preservation, the Ciarlet-Ne\v{c}as condition and $p>2$.
The core of this work is Section \ref{sec:elastic}, where we investigate the model with diverging elastic energy contribution on the stiff parts as introduced in~Section~\ref{subsec:setup}.  
We provide the technical basis in Section~\ref{sec:elastic_aux} by proving the two technical key tools, a quantitative rigidity estimate for cross structures and a Poincar\'e-type inequality for checkerboard structures.   Section~\ref{sec:elastic_deform} then covers the proof of the compactness statement in Theorem \ref{theo:homogenization_elastic_intro} and determines the possible effective deformations through the weak closure of the admissible deformations of small-scale checkerboard structures.
Finally, the remaining parts of the proof of the main result Theorem \ref{theo:homogenization_elastic_intro} can be found in Section~\ref{sec:elastic_hom}.

\subsection{Notation}
The standard unit vectors in $\R^2$ are denoted by $e_1$ and $e_2$. For two vectors $a,b\in\R^2$, we write $a\cdot b$ for their scalar product. 
The one-dimensional unit sphere $\Scal^1$ consists of all vectors in $\R^2$ with unit length. 
For $a\in \R^{2}$, let $a^\perp := -a_2e_1 +a_1e_2$, while for $A\in \R^{2\times 2}$, we define $A^\perp=(Ae_2|-Ae_1)$. 
We equip $\R^{m\times n}$ for $m,n\in\{1,2\}$ with the standard Frobenius norm, that is, $|A|=\sqrt{\Tr(A^TA)}$ for $A\in\R^{m\times n}$ where $A^T$ is the transpose of $A$ and $\Tr$ denotes the trace operator.
We write $\Id$ for the identity matrix in $\R^{2\times 2}$ and $\SO(2)$ stands for the special orthogonal group of matrices in $\R^{2\times 2}$.

If $U,V\subset \R^2$, then $U+V:=\{u+v:u\in U,v\in V\}$ describes their Minkowski sum.
The notation $A\Subset B$ for two sets $A,B\subset \R^{2}$ means that $A$ is compactly contained in $B$.
We refer to a non-empty, open, connected set as a domain.
Given $x_0\in \R^2$ and $R,r>0$, we set $B(x_0,R) = \{x\in \R^2: |x-x_0|<R\}$ as the ball around $x_0$ with radius $R$, and 
\begin{align}\label{annulus}
	A(x_0,R,r)=\{x\in \R^2 : r<|x-x_0|<R\}
\end{align}
as the annulus around $x_0$ with outer radius $R$ and inner radius $r$.
We write $|\cdot|$ for the Lebesgue measure and use $\sharp(\cdot)$ for the counting measure.

For an open set $U\subset \R^2$ and $1\leq p \leq \infty$, we use the standard notation for Lebesgue and Sobolev spaces $L^p(U;\R^m)$, $W^{1,p}(U;\R^m)$ and $W^{1,p}_0(U;\R^m)$ with vanishing boundary values in the sense of traces, and define $L^p_0(U;\R^2):=\{u\in L^p(U;\R^2) : \int_U u(x) \dd x= 0\}$. 
For functions $f:\R^{2\times 2}\to [0,\infty]$, we briefly write $f(Ae_1|Ae_2)$ instead of $f((Ae_1|Ae_2))$ for $A=(Ae_1|Ae_2)$.
The indicator function $\one_U$ of a set $U\subset \R^2$ is identical to $1$ on $U$ and vanishes everywhere else.
Furthermore, we define
\begin{align}\label{qc_envelope}
	f^\qc(F) := \inf_{\ffi\in W^{1,\infty}_0(D;\R^2)} \dashint_D f(F+\nabla \ffi) \dd x,
\end{align}
where $D\subset \R^2$ is an arbitrary bounded open set and $\dashint$ describes the mean integral, as the quasiconvex envelope of $f$.
We say that $f$ is polyconvex if there exists a convex and lower semicontinuous function $g:\R^{2\times 2}\times \R\to [0,\infty]$ such that $f(F) = g(F,\det F)$ for all $F\in \R^{2\times 2}$.

Throughout the document, we use $C>0$ for generic constants which may differ from term to term; if we want to highlight the dependence of certain quantities, we include them in parentheses. 
Finally, families indexed with a continuous parameter $\eps>0$ refer to any sequence $(\eps_j)_j$ with $\eps_j\to 0$ as $j\to \infty$.

\section{Analysis of the model with fully rigid tiles}\label{sec:rigid}

\subsection{Auxiliary results}\label{sec:rigid_aux}
The following lemmas identify local restrictions on neighboring rotations of the stiff parts in the checkerboard structure 
and shows that the boundary values of a deformation of any single soft tile coincide with those of a piecewise affine function.

\begin{lemma}\label{lem:affine}
Let $E\subset \R^2$ be an open rectangle  
with two sides of length $l$ parallel to $e_1$, two sides of length $\mu l$ and $\partial_iE=\Gamma_i$ for $i=1, \ldots, 4$ the linear pieces of the boundary $\partial E$, numbered clockwise, starting in the lower left corner. If $u\in W^{1,p}(E;\R^2)$ with $p \geq 2$ is such that 
\begin{align} \label{boundary}
u|_{\Gamma_i} = R_i x+b_i\quad\text{ with $R_i\in \SO(2)$ and $b_i\in \R^2$ for $i=1, \ldots, 4$,}
\end{align}
 the following two statements hold:

a) There exist matrices $R, S \in \SO(2)$ depending only on $u|_{\partial E}$ as well as functions $F_{\pm}, G_{\pm}: (\SO(2))^{2} \times (0,1) \to \SO(2)$ being independent of $E$ and $u$ 
such that 
\begin{align*}
\bigcup_{i=1}^4 R_i
\subset \{R, S, F_{\pm}(R,S,\mu), G_{\pm}(R,S,\mu)\}
\subset \SO(2).
\end{align*} 
In particular, it holds that $F_{\pm}(R,S,1) = \pm R^\perp $, $G_{\pm}(R,S,1) = \pm S^\perp$.

b) There exist $\varphi\in W_0^{1,p}(E;\R^2)$ and a piecewise affine function $v:E\to \R^2$ with at most two different gradients in the set 
\begin{align*}
\{(Se_1|Re_2), (F_{+}(R,S,\mu)e_1|G_{-}(R,S,\mu)e_2), (F_{-}(R,S,\mu)e_1|G_{+}(R,S,\mu)e_2)\} \subset \R^{2\times 2}
\end{align*}
such that
\begin{align}\label{splitting}
u=v+\varphi.
\end{align}
\end{lemma}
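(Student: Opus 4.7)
The plan is to extract a closure equation for the four rotations $R_i$ from the continuity of $u$ at the vertices of $E$, and then to build the piecewise affine $v$ by cutting $E$ along its anti-diagonal.

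First, I will write out the four corner-matching identities implied by~\eqref{boundary}: at the lower-left corner, $b_1 = b_4$, and at the other three corners, analogous affine relations connect the $R_i$ and $b_i$. Eliminating the $b_i$ leaves the single vector constraint
\[
(R_2 - R_4)\,e_1 \;=\; \mu\,(R_3 - R_1)\,e_2,
\]
which algebraically encodes the closedness of the image polygon $u(\partial E)$. Setting $R := R_1$ and $S := R_4$, this becomes $R_2 e_1 = S e_1 + \mu (R_3 e_2 - R e_2)$, and the requirement $|R_2 e_1| = 1$ forces the unit vector $v := R_3 e_2 \in \Scal^1$ to lie on the affine line $v \cdot (Se_1 - \mu Re_2) = Se_1 \cdot Re_2 - \mu$. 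Generically this line meets $\Scal^1$ in exactly two points: the parallelogram solution $v = R e_2$ (giving $R_2 = S$, $R_3 = R$) and a non-parallelogram one selected by a $\pm$ sign in the line-circle intersection. Using this sign I will define $F_\pm(R,S,\mu)$ and $G_\pm(R,S,\mu)$ as the corresponding $R_3$ and $R_2$ on each branch; in the square case $\mu = 1$, a direct trigonometric check reduces them to $\pm R^\perp$ and $\pm S^\perp$, proving (a).

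For (b), I split $E$ along the anti-diagonal from $(0, \mu l)$ to $(l, 0)$ into closed triangles $T_1 \supset \Gamma_1 \cup \Gamma_4$ and $T_2 \supset \Gamma_2 \cup \Gamma_3$. On $T_1$, let $v_1$ be the affine map with gradient $(Se_1 | Re_2)$ and $v_1(0,0) = b_1$; then $v_1 = u$ on $\Gamma_1 \cup \Gamma_4$ by direct check. On $T_2$, let $v_2$ be the affine map with gradient $(R_2 e_1 | R_3 e_2)$ and $v_2(l, \mu l) = u(l, \mu l)$; then $v_2 = u$ on $\Gamma_2 \cup \Gamma_3$, and by the previous step the gradient on $T_2$ is either $(Se_1 | Re_2)$ (parallelogram case, hence $v_1 = v_2$) or of the form $(F_\pm e_1 | G_\mp e_2)$ (non-parallelogram case), which sits in the prescribed set. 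To glue $v_1, v_2$ into a continuous $v$ on $E$ I need agreement along the anti-diagonal; affinity reduces this to the two endpoints $(0, \mu l)$ and $(l, 0)$, where the required identities are exactly the corner-continuity conditions of $u$ at the upper-left and lower-right vertices, and hence automatic. Since $v|_{\partial E} = u|_{\partial E}$ by construction, $\varphi := u - v$ lies in $W_0^{1,p}(E; \R^2)$.

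The main obstacle is in the second step: the line-circle intersection must be carried out carefully so that $F_\pm, G_\pm$ depend only on $(R, S, \mu)$ (not on $E$ or $u$) and match the claimed values at $\mu = 1$. The subsequent piecewise-affine construction is then geometrically natural, since the anti-diagonal is exactly the cut separating the two pairs of sides coupled by the closure equation.
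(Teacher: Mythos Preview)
Your algebraic route via the closure equation $(R_2-R_4)e_1=\mu(R_3-R_1)e_2$ is correct and, for $\mu\in(0,1)$, arguably cleaner than the paper's argument. The paper instead classifies the possible shapes of $u(\partial E)$ geometrically (parallelogram, degenerate line, or a parallelogram with two adjacent sides reflected across a diagonal), computes $F_\pm,G_\pm$ explicitly as reflections, and then builds $v$ by cutting along whichever diagonal matches the reflection --- sometimes the main diagonal, sometimes the anti-diagonal. Your observation that the anti-diagonal cut \emph{always} produces a continuous piecewise-affine $v$ matching $u$ on $\partial E$ (because the endpoint conditions are exactly the corner continuity of $u$) is a genuine simplification: you avoid the paper's case split over diagonals, and your line--circle analysis yields only a single non-parallelogram branch, so two functions suffice rather than four. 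You do have to be careful with the bookkeeping so that the second gradient $(R_2e_1|R_3e_2)$ lands in the lemma's set; with your convention this matches $(F_-e_1|G_+e_2)$ rather than $(F_+e_1|G_-e_2)$, and indeed your limiting values at $\mu=1$ come out as $-R^\perp$ and $S^\perp$, i.e., the minus--plus pair.

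There is, however, a real gap at $\mu=1$. Your line $v\cdot(Se_1-\mu Re_2)=Se_1\cdot Re_2-\mu$ degenerates to the trivial identity precisely when $Se_1=Re_2$, i.e., when $R_4=R_1^\perp$; this is your ``generically'' caveat, and it is not vacuous. In that case the closure equation reduces to $R_2e_1=R_3e_2$, so $R_3$ is a free rotation and cannot lie in a six-element set $\{R,S,F_\pm(R,S,1),G_\pm(R,S,1)\}$ built from $R=R_1$, $S=R_4$ alone. The paper escapes exactly this trap by making the choice of $(R,S)$ case-dependent: in this ``hook'' configuration it sets $S:=R_2$ instead of $R_4$, and correspondingly cuts along the main diagonal, so that $R_3$ and $R_4$ become $G_-(R,S,1)=-S^\perp$ and $F_+(R,S,1)=R^\perp$. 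Your sketch needs an analogous switch (e.g., swap to $S=R_2$ whenever $R_4=R_1^\perp$) to cover the square case completely; for strict rectangles $\mu<1$ the normal $Se_1-\mu Re_2$ never vanishes and your argument goes through as written.
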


\begin{proof}
Case 1: $E$ is an open square.
\\[-3mm]

After scaling and shifting, we may assume without loss of generality that $E=(0, 1)^{2}$ and $u(0)=0$. 
Due to $p \geq 2$, the trace of $u$ is continuous on $\partial E$. For $p >2$, this is a direct consequence of the fact that $W^{1,p}(\Omega;\R^2)$ embeds into the H\"older space $C^{0, 1-\frac{2}{p}}(\overline{\Omega};\R^2)$. For $p=2$, this follows from the fact that the boundary values \eqref{boundary} satisfy the assertion (c) of Theorem~1.5.2.3 in \cite{Gr85} if and only if   
they are continuous on $\partial E$.
Because of \eqref{boundary}, the continuity of $u(\partial E)$ and the fact that triangles which correspond in their three side lengths are congruent, it follows that $u(\partial E)$ has to be either
\begin{itemize}
\item[$i)$] the boundary of a rhombus with side length $1$ or 
\item[$ii)$] a straight line of length $2$ or
\item[$iii)$] a hook with two arms of length $1$ each or
\item[$iv)$] a straight line of length $1$,
\end{itemize}
cf.~Figure~\ref{fig2}.

\begin{figure}
\begin{tikzpicture}
	\begin{scope}[xshift = -6.5cm, yshift = 0cm, scale = 1]
		\filldraw[fill=Cerulean!50!white, draw=black]  (0, -1.5) --  (-1,0) -- (0,1.5)  -- (1,0) -- (0,-1.5); 
		\draw (-1,1) node {$i)$};
		\draw (-0.7,-0.9) node {$1$}; 
		\draw (0.7,-0.9) node {$1$};
	\end{scope}

	\begin{scope}[xshift = -3cm, yshift = 0cm, scale = 1]
		\filldraw[fill=Cerulean!50!white, draw=black]  (-1.5, 0) --  (1.5,0); 
		\draw[color=black] (0,-0.1)--(0, 0.1);
		\draw (-1,1) node {$ii)$};
		\draw (-0.7,-0.3) node {$1$}; 
		\draw (0.7,-0.3) node {$1$};
	\end{scope}

	\begin{scope}[xshift = 0.5cm, yshift = 0cm, scale = 1]
		\fill[Cerulean!50!white]  (0, 1) --  (0,-0.5) -- (1.5,-0.5)  -- (0,1); 
		\draw[color=black] (0, 1) -- (0, -0.5) --  (1.5,-0.5); 
		\draw (-1,1) node {$iii)$};
		\draw (-0.3,0.2) node {$1$}; 
		\draw (0.7,-0.8) node {$1$};
	\end{scope}

	\begin{scope}[xshift = 4cm, yshift = 0cm, scale = 1]
		\draw[color=black] (-0.5,0) -- (1,0) -- (-0.5,0); 
		\draw (-1,1) node {$iv)$};
		\draw (0.3,-0.3) node {$1$};
	\end{scope}
\end{tikzpicture}
\caption{ Illustration of the boundary deformations $u(\partial E)$ for $\mu =1$.}\label{fig2}
\end{figure}
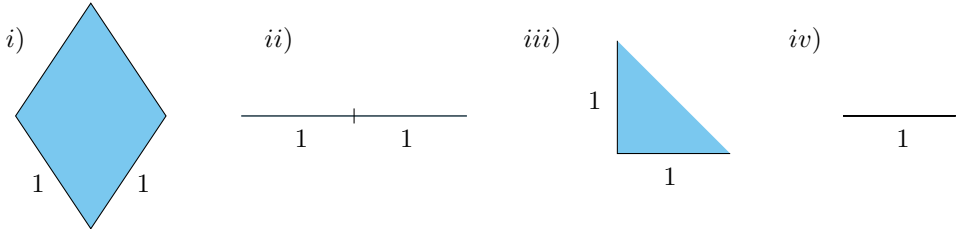

\begin{figure}
\begin{tikzpicture}
	\begin{scope}[xshift = -6.5cm, yshift = -4cm, scale = 1]
		\filldraw[fill=Cerulean!50!white, draw=black]  (-1, -0.5) --  (-1,0.5) -- (1,1.5)  -- (1,0.5) -- (-1,-0.5);
		\draw (-1,1) node {$i)$};
		\draw (-1.2,0) node {$\mu$}; 
		\draw (0.1,-0.2) node {$1$};
	\end{scope}

	\begin{scope}[xshift = -3cm, yshift = -4cm, scale = 1]
		\filldraw[fill=Cerulean!50!white, draw=black]  (-1.5, 0) --  (1.5,0); 
		\draw[color=black] (-0.5,-0.1)--(-0.5, 0.1);
		\draw (-1,1) node {$ii)$};
		\draw (-1,-0.3) node {$\mu$}; 
		\draw (0.7,-0.3) node {$1$};
	\end{scope}

	\begin{scope}[xshift = 1cm, yshift = -4cm, scale = 1]
		\filldraw[fill=Cerulean!50!white, draw=Cerulean!50!white] (-0.5, -0.5) -- (0.5, 1.5) --  (1.5, 1.5)  ; 
		\filldraw[fill=Cerulean!50!white, draw=black] (-0.5, -0.5) --  (-0.5,0.5) -- (1.5,1.5)  -- (0.5, 1.5) -- (-0.5, -0.5); 
		\draw (-1,1) node {$iii)$};
		\draw (-0.8,0) node {$\mu$}; 
		\draw (0.2,0.5) node {$1$};
	\end{scope}

	\begin{scope}[xshift = 4cm, yshift = -4cm, scale = 1]
		\filldraw[fill=Cerulean!50!white, draw=Cerulean!50!white] (-1, 0.5) -- (1,0.5) --  (1, -0.5)  ; 
		\filldraw[fill=Cerulean!50!white, draw=black]  (-1, -0.5) --  (-1,0.5) -- (1,-0.5)  -- (1,0.5) -- (-1,-0.5);
		\draw (-1.2,0) node {$\mu$}; 
		\draw (-0.3,-0.4) node {$1$};
	\end{scope}
\end{tikzpicture}
\caption{ Illustration of the boundary deformations $u(\partial E)$ for $0<\mu<1$.}\label{fig2mu}
\end{figure}

For $i)$ and $ii)$, we observe that $R_1=R_3=:R$ and $R_2=R_4=:S$. Hence, the affine map $v:E\to \R^2$ with $\nabla v=(Se_1|Re_2)$ and $v(0)=0$ satisfies $u(\partial E)=v(\partial E)$. 

The situations $iii)$ and $iv)$ imply that 
\begin{align}
 R_4 e_1&= R_1e_2 \;\text{ and }\; R_3e_2=R_2e_1 \label{case_square_a}
\end{align}
or
\begin{align} 	
R_2e_1&= -  R_1e_2 \;\text{  and }\; R_3e_2=- R_4e_1. \label{case_square_b}
\end{align}

For \eqref{case_square_a}, let $R:=R_1$, $S:=R_2$ and $E_{\rm nw}=\{x\in (0,1)^2: x_2>x_1\}$, $E_{\rm se}=\{x\in (0,1)^2:x_2<x_1\}$ be the open triangles that result from cutting $E$ at the diagonal. We define $v:E\to \R^2$ via $v(0)=0$ and 
\begin{align}\label{p1}
\nabla v=\begin{cases} (Se_1|Re_2) & \text{in $E_{\rm nw}$,}\\
(Re_2|Se_1) & \text{in $E_{\rm se}$.}
\end{cases}
\end{align} 
By construction, $v$ is compatible along the diagonal, hence $v\in W^{1, \infty}(E;\R^2)$ and $v(\partial E)=u(\partial E)$.

We argue similarly for \eqref{case_square_b}, setting $R:=R_1$, $S:=R_4$, $E_{\rm sw}=\{x\in (0,1)^2: x_2<1-x_1\}$ and $E_{\rm ne}=\{x\in (0,1)^2:x_2>1-x_1\}$. Defining a continuous function $v:E\to \R^2$  by
$v(0)=0$ and
\begin{align}\label{p2}
\nabla v=\begin{cases} (Se_1|Re_2) & \text{in $E_{\rm sw}$,}\\
- (Re_2|Se_1) & \text{in $E_{\rm ne}$}
\end{cases}
\end{align} 
yields a piecewise affine function with $v(\partial E) = u(\partial E)$. 

Hence, we obtain the statements of the lemma for $\mu=1$ by defining
\begin{align} \label{Fpm2}
F_{\pm}(R,S,1) & = \pm R^\perp   , \\ \label{Gpm2}
G_{\pm}(R,S,1) & = \pm S^\perp .
\end{align} 
\medskip
\\
Case 2: $E$ is an open rectangle and $0 < \mu <1$. 
\\[-3mm]

After scaling and shifting, we may assume without loss of generality that $E=(0, 1) \times (0,\mu)$ and that $u(0)=0$. 
Because of $p \geq 2$, the trace of $u$ is again continuous on $\partial E$. For the same reasons as above it follows that $u(\partial E)$ is either 
\begin{itemize}
\item[$i)$] the boundary of a parallelogram with side lengths $1$ and $\mu$ or 
\item[$ii)$] a straight line of length $1+\mu$ or
\item[$iii)$] the union of the two sides below or above one diagonal of a parallelogram  from $i)$ with the reflection of the two other sides on that diagonal,
\end{itemize}
cf.~Figure~\ref{fig2mu}. 

For $i)$ and $ii)$, we observe that $R_1=R_3=:R$ and $R_2=R_4=:S$. Hence, the affine map $v:E\to \R^2$ with $\nabla v=(Se_1|Re_2)$ and $v(0)=0$ satisfies $u(\partial E)=v(\partial E)$. 

The situation $iii)$ implies that 
\begin{align}
	R_4 e_1 &= F_{+}(R_1,R_2,\mu)e_1 \;\text{ and }\; R_3e_2= G_{-}(R_1,R_2,\mu)e_2,\label{case_rectangle_a}
\end{align}
or
\begin{align}
	R_2 e_1 &= F_{-}(R_1,R_4,\mu)e_1 \;\text{ and }\; R_3e_2=G_{+}(R_1,R_4,\mu)e_2,\label{case_rectangle_b}
\end{align}
where $F_{\pm}, G_{\pm}$ are given by 
\begin{align} \label{Fpm1}
	 F_{\pm}(R,S,\mu) & = \frac{\pm 2 \mu + 2\mu^{2}\, Se_1 \cdot Re_2 }{1+ \mu^{2} \pm 2 \mu \, Se_1 \cdot Re_2} \, R^\perp +
		\frac{1-\mu^{2}}{1+\mu^{2} \pm 2 \mu\, Se_1 \cdot Re_2} \, S , \\[2mm] \label{Gpm1}
	G_{\pm}(R,S,\mu) & = \frac{\pm 2 \mu - 2\, Se_1 \cdot Re_2 }{1+ \mu^{2} \mp 2 \mu \, Se_1 \cdot Re_2} \, S^\perp -
		\frac{1-\mu^{2}}{1+\mu^{2} \mp 2 \mu\, Se_1 \cdot Re_2} \, R , 
\end{align}
which can be directly computed by using the facts that the reflection $\mathcal{R}_u v$ of a vector $v$ across a line $\{\lambda u: \lambda \in \mathbb{R}\}$ is determined by
\begin{align*}
\mathcal{R}_u v = 2\, \frac{v \cdot u}{u \cdot u}\, u - v
\end{align*}
and that $|Se_1|=|Re_2|=1$.
We remark that the denominators in \eqref{Fpm1}--\eqref{Gpm1} cannot be equal to $0$ for $0 < \mu <1$ and that in the case of $\mu=1$, \eqref{Fpm1}--\eqref{Gpm1} coincide with \eqref{Fpm2}--\eqref{Gpm2} if $Se_1 \cdot Re_2 \neq \mp 1$. 

For \eqref{case_rectangle_a}, let $R:=R_1$, $S:=R_2$ and $E_{\rm nw}=\{x\in (0,1)\times (0,\mu): x_2>\mu x_1\}$, $E_{\rm se}=\{x\in (0,1)^2:x_2<\mu x_1\}$ be the open triangles that result from cutting $E$ at the diagonal. We define $v:E\to \R^2$ via $v(0)=0$ and 
\begin{align}\label{p1mu}
\nabla v=\begin{cases} (Se_1|Re_2) & \text{in $E_{\rm nw}$,}\\
(F_{+}(R,S,\mu)e_1|G_{-}(R,S,\mu)e_2) & \text{in $E_{\rm se}$.}
\end{cases}
\end{align} 
By construction, $v$ is compatible along the diagonal, hence $v\in W^{1, \infty}(E;\R^2)$, and $v(\partial E)=u(\partial E)$.

We argue similarly for \eqref{case_rectangle_b}, setting $R:=R_1$, $S:=R_4$, $E_{\rm sw}=\{x\in (0,1)\times (0,\mu): x_2<\mu(1-x_1)\}$ and $E_{\rm ne}=\{x\in (0,1)\times (0,\mu):x_2>\mu(1-x_1)\}$. Defining a continuous function $v:E\to \R^2$ by
$v(0)=0$ and
\begin{align}\label{p2mu}
\nabla v=\begin{cases} (Se_1|Re_2) & \text{in $E_{\rm sw}$,}\\
 (F_{-}(R,S,\mu)e_1|G_{+}(R,S,\mu)e_2) & \text{in $E_{\rm ne}$}
\end{cases}
\end{align} 
yields a piecewise affine function with $v(\partial E) = u(\partial E)$. 
\end{proof}

The next result specializes the previous two lemmas to the case of orientation and locally volume-preserving maps.

\begin{corollary}[Decomposition on a single soft tile]\label{cor:preservation}
Let $E$ and $u\in W^{1,p}(E;\R^2)$ be as in Lemma~\ref{lem:affine}.

a) If $\det \nabla u>0$ a.e.~in $E$, then there exist $R, S\in \SO(2)$ with $\det(Se_1|Re_2) = Se_1\cdot Re_1>0$, $b\in \R^2$ and $\varphi\in W_0^{1,p}(E;\R^2)$ such that
\begin{align}\label{two_rot}
	u(x) = (Se_1|Re_2)x+ b + \varphi(x) \quad \text{for a.e.~$x\in E$.}
\end{align} 

b) If additionally, $\det \nabla u=1$ a.e.~in $E$, then there exists $R\in \SO(2)$,
such that 
\begin{align*}
	u(x) = Rx+ b + \varphi(x) \quad \text{for a.e.~$x\in E$.}
\end{align*} 
\end{corollary}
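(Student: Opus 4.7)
The plan is to combine Lemma~\ref{lem:affine} with the fact that, for $p\geq 2$, the Jacobian $\det\nabla\cdot$ is a null Lagrangian, so that $\int_E\det\nabla u\,dx$ depends only on the boundary trace of $u$ and equals the signed area enclosed in $\R^2$ by the closed curve $u(\partial E)$.

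Starting from the decomposition $u=v+\varphi$ supplied by Lemma~\ref{lem:affine}, the null Lagrangian property (valid since $u-v\in W^{1,p}_0(E;\R^2)$ and $p\geq 2$) yields
\begin{align*}
	\int_E\det\nabla u\,dx \;=\; \int_E\det\nabla v\,dx \;=\; \oint_{\partial E} u_1\,du_2.
\end{align*}
The next step is to revisit the boundary configurations identified in the proof of Lemma~\ref{lem:affine}. In the non-degenerate case~(i), $v$ has the single gradient $(Se_1|Re_2)$ and the enclosed signed area equals $|E|\cdot\det(Se_1|Re_2)=|E|\cdot Se_1\cdot Re_1$. In the degenerate configurations (straight line, hook, or folded parallelogram), the curve $u(\partial E)$ is traced out and back along itself, so its enclosed signed area vanishes, yielding $\int_E\det\nabla v\,dx=0$.

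For part~(a), the orientation hypothesis $\det\nabla u>0$ a.e. forces $\int_E\det\nabla u\,dx>0$, which rules out all degenerate configurations and leaves only case~(i) with $Se_1\cdot Re_1>0$. The decomposition $u(x)=(Se_1|Re_2)x+b+\varphi(x)$ can then be read off the form of $v$. For part~(b), the stronger requirement $\det\nabla u=1$ a.e. gives $|E|=|E|\cdot\det(Se_1|Re_2)$ and hence $\det(Se_1|Re_2)=1$; since both columns of this matrix are unit vectors (as $R,S\in\SO(2)$), it itself lies in $\SO(2)$, and this is only compatible with $R=S$, yielding the claimed form $u(x)=Rx+b+\varphi(x)$.

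The main obstacle will be justifying the vanishing of the enclosed signed area in the folded configurations of Lemma~\ref{lem:affine}, most notably case~(iii) for rectangles where the explicit rotations $F_\pm(R,S,\mu)$ and $G_\pm(R,S,\mu)$ intervene. Rather than checking algebraic cancellation of the triangle-wise determinants, I would argue geometrically: a planar closed curve that traces an arc and then its reversal encloses zero oriented area, which is precisely the situation for $u(\partial E)$ in all the degenerate configurations. This sidesteps any direct manipulation of the intricate $F_\pm,G_\pm$ formulas.
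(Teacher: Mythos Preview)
Your approach is essentially the paper's: both invoke the null-Lagrangian property of the Jacobian to replace $\int_E\det\nabla u$ by $\int_E\det\nabla v$, then show the latter vanishes in the degenerate boundary configurations of Lemma~\ref{lem:affine} and equals $|E|\,Se_1\cdot Re_1$ in case~(i). The paper carries out this last step by summing the constant values of $\det\nabla v$ over the two triangles of $E$ and appealing to the identity $\det(F_\pm(R,S,\mu)e_1\mid G_\mp(R,S,\mu)e_2)=-\det(Se_1\mid Re_2)$, which it extracts from the explicit formulas~\eqref{Fpm1}--\eqref{Gpm1}.

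Your geometric shortcut has a gap, however. In the rectangle case~(iii) of Lemma~\ref{lem:affine}, the boundary curve $u(\partial E)$ is \emph{not} an arc traced and then reversed: it is a genuine self-intersecting quadrilateral --- two sides of a parallelogram joined to the reflection of the remaining two across a diagonal (see Figure~\ref{fig2mu}). The enclosed signed area does vanish, but for a different reason: the two triangular lobes are mirror images across that diagonal, hence have equal area and opposite orientation. This observation is straightforward once stated and is geometrically equivalent to the paper's algebraic cancellation, so your plan to sidestep the $F_\pm,G_\pm$ formulas is salvageable; but the ``arc and its reversal'' justification you gave covers only the straight-line and hook configurations, not this one.
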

\begin{proof}
The statement a) follows from the observation that the situations $iii)$ and $iv)$ in the cases $1$ and $2$ in the proof of Lemma~\ref{lem:affine} can be ruled out since $u$ is orientation preserving. 
Indeed, assume to the contrary that there exists $p$ as constructed in~\eqref{p1} or~\eqref{p1mu}. In these cases, the identity~\eqref{splitting} and the Null-Lagrangian property of the determinant yield the contradictions
\begin{align*}
	0<\int_{E} \det \nabla u\dd x= \int_E \det (\nabla u-\nabla \varphi)\dd x= \frac{1}{2}|E|\det(Se_1|Re_2) + \frac{1}{2}|E|\det(Re_2|Se_1)=0
\end{align*}
or
\begin{align*}
	 0<\int_{E} \det \nabla u\dd x &= \int_E \det (\nabla u-\nabla \varphi)\dd x \\
		&= \frac{1}{2}|E|\det(Se_1|Re_2) + \frac{1}{2}|E|\det(F_{\pm}(R,S,\mu)e_1|G_{\mp}(R,S,\mu)e_2) \\
		 &= \frac{1}{2}|E|\det(Se_1|Re_2) - \frac{1}{2}|E|\det(Se_1|Re_2) =0\,,
\end{align*}
where the second to last equality follows by \eqref{Fpm2}--\eqref{Gpm2}, \eqref{Fpm1}--\eqref{Gpm1} and basic algebraic properties of the determinant.
The cases~\eqref{p2} and~\eqref{p2mu} can be handled analogously.
 
The only remaining possible boundary values of $u$ are described by the situations $i)$ and $ii)$ in the proof of Lemma \ref{lem:affine}. 
Hence, there exist two rotations $R,S\in \SO(2)$ and $\ffi\in W^{1,p}_0(E;\R^2)$, such that \eqref{two_rot} is satisfied.
Note that $\det (Se_1|Re_2) = Se_1\cdot Re_1$ and distinguish three cases: If $\det(Se_1|Re_2) >0$, then there is nothing to prove; otherwise $\det(Se_1|Re_2)\leq 0$ (equality corresponds to the case $ii)$) and it holds that
\begin{align}\label{det_scalar_product}
	0<\int_E \det(\nabla u)\dd x = \int_E \det (\nabla u-\nabla \varphi)\dd x = |E|\det (Se_1|Re_2)  \leq 0,
\end{align}
which produces a contradiction.
 
\medskip
 
b) In case $\det \nabla u=1$ a.e.~in $E$, then we obtain analogously to \eqref{det_scalar_product} the identity
\begin{align*}
	|E|=\int_E \det(\nabla u)\dd x =|E|Re_1\cdot Se_1,
\end{align*}
from which we conclude that $Re_1$ is identical to $Se_1$. The desired equality then follows from \eqref{two_rot}.
\end{proof}

\subsection{Macroscopic deformation behavior}\label{sec:rigid_deform}
In this section, we focus on maps $u_\eps\in\Acal$ with $\nabla u_\eps\in \SO(2)$ on $\Omega\cap \eYstiff$.
We prove via Corollary \ref{cor:preservation} that $\nabla u_\eps$ can essentially only attain two different values $S_\eps,R_\eps\in\SO(2)$ with $S_\eps e_1\cdot R_\eps e_1>0$ on $\Omega\cap \eYstiff$, which suggests an affine limit with gradient in the set $K$ as in \eqref{Klambda}.
The next proposition proves this statement (on any compactly contained subset) and serves as the compactness result for the homogenization in Theorem \ref{theo:homogenization_rigid} later in this section.

\begin{proposition}[Characterization of limit deformations]\label{prop:K_lambda}
Let $p \geq 2$.

a) If a sequence $(u_\eps)_\eps\subset \Acal$ (recall \eqref{Acal}) satisfies
\begin{align}\label{diffinclusion}
	\nabla u_\eps \in \SO(2)\  \text{a.e.~in $\Omega\cap\eYstiff$}
\end{align} 
and $u_\eps\weakly u$ in $W^{1,p}(\Omega;\R^2)$, then $u$ is affine with
\begin{align*}
	\nabla u = F \in  K:=\{\lambda S + (1-\lambda)R : R,S\in \SO(2), Re_1\cdot Se_1 \geq 0\}.
\end{align*}

b) For every affine function $u:\Omega\to \R^2$ with $\nabla u \in K$ there exists a sequence of piecewise affine functions $(u_\eps)_\eps\subset \Acal$ satisfying~\eqref{diffinclusion} and $\int_\Omega u_\eps \dd{x} = \int_\Omega u\dd{x}$ such that $u_\eps\weakly u$ in $W^{1,p}(\Omega;\R^2)$.
\end{proposition}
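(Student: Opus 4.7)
The plan is to combine the rigidity on each stiff square with the local decomposition on each soft tile from Corollary~\ref{cor:preservation} and to propagate the resulting rotations across the connected checkerboard. Fix an open connected $\Omega'\Subset\Omega$ and take $\eps$ small enough that every tile at scale $\eps$ meeting $\Omega'$ lies fully in $\Omega$. On each connected stiff component, \eqref{diffinclusion} forces $\nabla u_\eps$ to be constant in $\SO(2)$, so $u_\eps$ is a rigid motion there; on each interior soft tile $E$, Corollary~\ref{cor:preservation}(a) provides rotations $R_E,S_E\in\SO(2)$ with $R_E e_1\cdot S_E e_1>0$ and $\varphi_E\in W^{1,p}_0(E;\R^2)$ such that $u_\eps|_E=(S_E e_1\,|\,R_E e_2)\,x+b_E+\varphi_E$. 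Matching tangential traces along each of the four sides of $E$ with those of the rigid motion on the adjacent stiff neighbor identifies $S_E$ with the common rotation of the two stiff tiles meeting $E$ along its horizontal sides and $R_E$ with that of the two neighbors on its vertical sides. Since the tile-adjacency graph inside $\Omega'$ is connected, this couples all $Y_1$-type tiles into a common rotation $Q_{1,\eps}\in\SO(2)$ and all $Y_3$-type tiles into a common rotation $Q_{3,\eps}\in\SO(2)$ with $Q_{1,\eps}e_1\cdot Q_{3,\eps}e_1>0$. This propagation step, which hinges on the tangential-trace matching, is the core of the argument.

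\textbf{Passing to the limit.} Along a subsequence, $Q_{i,\eps}\to Q_i$ in the compact group $\SO(2)$, and the cell-average identity
\begin{align*}
|Y_1|Q_{1,\eps}+|Y_3|Q_{3,\eps}+|Y_2|(Q_{1,\eps}e_1\,|\,Q_{3,\eps}e_2)+|Y_4|(Q_{3,\eps}e_1\,|\,Q_{1,\eps}e_2) = \lambda Q_{1,\eps}+(1-\lambda)Q_{3,\eps}
\end{align*}
follows by direct computation. Combined with $\sum_E \varphi_E\one_E\to 0$ in $L^p(\Omega';\R^2)$ by rescaled Poincar\'e on each soft tile (gaining a factor $\eps$) and the weak $L^p$-convergence of $\eps$-periodic data to their cell average, this yields $\nabla u_\eps\weakly \lambda Q_1+(1-\lambda)Q_3$ on $\Omega'$. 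Identifying this with the hypothesized weak limit $\nabla u$ and invoking arbitrariness of $\Omega'\Subset\Omega$ together with the connectedness of $\Omega$ forces $u$ to be affine with $\nabla u\in K$.

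\textbf{Plan for part (b).} Given $F=\lambda S+(1-\lambda)R\in K$ with $Re_1\cdot Se_1>0$, I let $u_\eps$ be the globally continuous piecewise affine map (unique up to an additive constant fixed by $\int_\Omega u_\eps\dd x=\int_\Omega u\dd x$) whose gradient equals $S$, $R$, $(Se_1|Re_2)$, $(Re_1|Se_2)$ on $\eps(Y_1+k)$, $\eps(Y_3+k)$, $\eps(Y_2+k)$, $\eps(Y_4+k)$, respectively. Continuity across each interior edge is automatic since the tangential derivatives of the two adjacent gradients coincide (e.g., both equal $Se_1$ across any $Y_1$--$Y_2$-edge, and analogously for the other three edge types). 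Then $\det\nabla u_\eps>0$ a.e.\ because $\det S=\det R=1$ and $\det(Se_1|Re_2)=\det(Re_1|Se_2)=Re_1\cdot Se_1>0$; the Ciarlet-Ne\v{c}as condition follows from global injectivity of the classical rotating-squares auxetic motion realized by $u_\eps$, verified by a direct geometric argument that the images of the rigidly rotated stiff squares together with the parallelogram images of the soft tiles form a non-overlapping tiling --- this is the main obstacle in part (b). The degenerate case $Re_1\cdot Se_1=0$ is obtained by approximating $F$ by $F_n\in K$ with strict inequality and passing to a diagonal subsequence. Finally, $\nabla u_\eps$ is $\eps$-periodic with cell average $F$, so $\nabla u_\eps\weakly F$ in $L^p$; a telescoping estimate through adjacent tiles gives $\|u_\eps-Fx-c_\eps\|_{L^\infty(\Omega)}\leq C\eps$ for some constant $c_\eps$, and the normalization of the mean then yields $u_\eps\to u$ strongly in $L^p$, whence $u_\eps\weakly u$ in $W^{1,p}(\Omega;\R^2)$.
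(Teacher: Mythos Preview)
Your proposal is correct and follows essentially the same approach as the paper: both parts rely on Corollary~\ref{cor:preservation}\,a) to force opposite stiff neighbors of each soft tile to share a rotation, propagate this through the connected tile graph to get two global rotations $S_\eps,R_\eps$, split $u_\eps$ into a piecewise affine part plus a $W^{1,p}_0$-perturbation on the soft tiles, and then pass to the limit via Riemann--Lebesgue and compactness of $\SO(2)$; the recovery sequence in (b) is likewise the explicit rotating-squares construction with the degenerate case $Re_1\cdot Se_1=0$ handled by approximation. The only minor deviation is that you deduce $\nabla\varphi_\eps\weakly 0$ from the strong $L^p$-convergence $\varphi_\eps\to 0$ obtained via a scaled Poincar\'e inequality on each soft tile (combined with the $W^{1,p}$-bound), whereas the paper argues directly by testing $\nabla\varphi_\eps$ against characteristic functions of cubes and using Gauss--Green; both routes are equally short.
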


\begin{proof} 
a) \textit{Step 1: Local-global rigidity effects.} 
First, we prove that the set of rotation matrices which emerge from Reshetnyak's rigidity theorem on all the connected components of $\Omega\cap\eYstiff$ has at most two different elements.

Let $\Omega'\Subset\Omega$ and set
\begin{align}\label{J_eps}
	J_\eps'=\{k\in \Z^2: \Omega'\cap\eps(k+Y)\neq \emptyset\}.
\end{align}
Then, it holds that $\Omega'\subset\bigcup_{k\in J_\eps'}\eps(k+Y)\subset\Omega$ for $\eps$ sufficiently small.
By Reshetnyak's rigidity theorem, we conclude that for each $k\in J_\eps$ there exist rotations $S_\eps^k, R_\eps^k \in \SO(2)$ such that $\nabla u_\eps=S_\eps^k$ on $\eps(k+Y_1)$ and $\nabla u_\eps=R_\eps^k$ on $\eps(k+Y_{3})$, respectively. 
Applying Corollary~\ref{cor:preservation}\;a) on each rectangle $\eps(k+(0, \lambda)\times (\lambda, 1))$ and $\eps(k+(\lambda, 1)\times (0, \lambda))$ yields that $S_\eps^k = S_\eps^{l}$ and $R_\eps^k = R_\eps^{l}$ for all $k, l\in J_\eps'$. 
Thus, $\nabla u_\eps$ attains at most two different values, say $S_\eps\in \SO(2)$ on $\Omega'\cap\eps(k+Y_1)$ and $R_\eps\in \SO(2)$ on $\Omega'\cap\eps(k+ Y_3)$ with $R_\eps e_1 \cdot S_\eps e_1 >0$ for all $k\in \Z^2$.
\medskip

\textit{Step 2: Characterization of the weak limit.}
In light of Step 1 and \eqref{two_rot}, we can now write $u_\eps\restrict{\Omega'}$ in the form
\begin{align}\label{splitting1}
	u_\eps = v_\eps + \varphi_\eps\quad \text{on $\Omega'$,}
\end{align} 
where $v_\eps:\R^2\to\R^2$ is a $\eps Y$-periodic continuous and piecewise affine function with gradients
\begin{align}\label{nablav_eps}
	\nabla v_\eps =	 \begin{cases}
						S_\eps & \text{on $\eps Y_1$,}\\
						R_\eps &  \text{on $\eps Y_3$,}\\
						(S_\eps e_1 | R_\eps e_2) &  \text{on $\eps Y_2$,}\\
						(R_\eps e_1 | S_\eps e_2) & \text{on $\eps Y_4$,}
					\end{cases}
\end{align}
and $\varphi_\eps\in W^{1, p}(\Omega';\R^2)$ with $\varphi_\eps = 0$ on $\Omega'\cap\eYstiff$.

In the following, we show that
\begin{align}\label{weaklimit_varphi}
	\nabla \varphi_\eps \weakly 0 \quad \text{in $L^p(\Omega';\R^{2\times 2})$.}
\end{align}
We first observe that $(\nabla \ffi_\eps)_\eps$ is bounded in $L^p(\Omega';\R^{2\times 2})$ since $(u_\eps)_\eps$ is weakly convergent in $W^{1,p}(\Omega;\R^2)$ and $|\nabla v_\eps| = \sqrt{2}$ for every $\eps>0$ and a.e.~on $\Omega'$. Since piecewise constant functions on a grid are dense in $L^q(\Omega';\R^{2\times 2})$ with $\frac{1}{p} + \frac{1}{q} =1$, it suffices to test the weak convergence with characteristic functions of squares.
Hence, we set for an arbitrary open square $Q\subset \Omega'$ the set $Q_\eps:=\bigcup_{k\in I_\eps^{\partial Q}} \eps(k + \Ysoft)$ with $I_\eps^{\partial Q} = \{k\in \Z^2: \partial Q\cap \eps(k+\Ysoft) \neq \emptyset \}$, and obtain with the help of the Gauss-Green theorem and H\"older's inequality that
\begin{align*}
	\Bigl|\int_Q \nabla \varphi_\eps \dd{x}\Bigr|\leq \Bigl|\int_{Q_\eps \cap Q} \nabla \varphi_\eps\dd{x}\Bigr| 
		\leq  \|\nabla \ffi_\eps\|_{L^p(\Omega')} |Q_\eps|^{1-\frac{1}{p}} \leq C\|\nabla \ffi_\eps\|_{L^p(\Omega')} \eps^{1-\frac{1}{p}}.
\end{align*}
In the last line, we used the fact that $\# I_\eps^{\partial Q} \leq C \frac{1}{\eps}$ for a constant $C>0$ independent of $\eps$, and $|\eps(k+\Ysoft)| = 2\lambda(1-\lambda)\eps^2$. This proves the desired convergence \eqref{weaklimit_varphi}.

We now address the the weak convergence of $(\nabla v_\eps)_\eps$. First, we find $R,S\in \SO(2)$ and (non-relabeled) subsequences of $(S_\eps)_\eps$ and $(R_\eps)_\eps$ such that $S_\eps \to S$ and $R_\eps \to R$ as $\eps\to 0$; the limits then satisfy $Re_1 \cdot Se_1\geq 0$ since $R_\eps e_1 \cdot S_\eps e_1 >0$ for all $\eps$.
We now show that
\begin{align}\label{weaklimit_v}
	\nabla v_\eps\weakly \lambda S + (1-\lambda)R \qquad \text{in $L^p(\Omega;\R^{2\times 2})$.}
\end{align}
To see this, we define for $\eps$ the auxiliary functions $w_\eps(x) = \eps w(\frac{x}{\eps})$ for $x\in \R^2$, where $w:\R^2\to \R^2$ is continuous and piecewise affine with the $Y$-periodic arrangement of gradients
\begin{align}\label{nablaw}
	\nabla w  = 	\begin{cases}
						S & \text{on $Y_1$,}\\
						R &  \text{on $Y_3$,}\\
						(Se_1 | R e_2) &  \text{on $Y_2$,}\\
						(Re_1 | S e_2) & \text{on $Y_4$.}
					\end{cases}
\end{align} 
It follows with the help of the Riemann-Lebesgue lemma that 
\begin{align}\label{weaklimit_w}
	\nabla w_\eps\weakly \int_Y \nabla w \dd{x} = \lambda^2S +(1-\lambda)^2R + \lambda(1-\lambda)(S+R) = \lambda S + (1-\lambda)R \quad\text{in }L^p(\Omega;\R^{2\times 2}).
\end{align}
Moreover, it holds that
\begin{align}\label{v_eps-w_eps}
	\|\nabla v_\eps -\nabla w_\eps\|_{L^p(\Omega;\R^{2\times 2})} \leq C(|R_\eps-R| + |S_\eps - S|)
\end{align}
for a constant $C>0$ independent of $\epsilon$ and $\Omega'$. Combining \eqref{weaklimit_w} with \eqref{v_eps-w_eps} then produces \eqref{weaklimit_v}.

In view of \eqref{splitting1}, we then obtain that
\begin{align}\label{affine_limit}
	\nabla u=  \lambda S + (1-\lambda)R \quad \text{a.e.~on $\Omega'$,}
\end{align}
by the uniqueness of weak limits. 
The arbitrariness of $\Omega'\Subset\Omega$ implies that~\eqref{affine_limit} is true on all of $\Omega$.

\medskip 

b) For the proof of the approximation result we use an explicit construction of continuous and piecewise affine functions, ensuring first the orientation preservation and \eqref{diffinclusion}.
Let $u:\Omega\to \R^2$ be affine such that
\begin{align*}
	\nabla u = \lambda S+(1-\lambda)R
\end{align*} 
with $S, R\in \SO(2)$ satisfying $Re_1 \cdot Se_1\geq 0$. 
If the latter is an equality, then we choose a sequence $(\hat{S}_\eps)_\eps\subset \SO(2)$ such that $Re_1\cdot \hat{S}_\eps e_1 >0$ and $\hat{S}_\eps \to S$ as $\eps\to 0$.
We then define the continuous and piecewise affine approximating sequence $(u_\eps)_\eps$ as
\begin{align}\label{sequence1}
	u_\eps(x) =	v_\eps(x) + \dashint_\Omega u(y) - v_\eps(y)\dd{y},\quad x\in\R^2
\end{align}
where $v_\eps:\R^2\to\R^2$ is chosen as in \eqref{nablav_eps} with
\begin{align}\label{rot_replacement}
	\begin{cases}
		R_\eps=R\text{ and } S_\eps = \hat{S}_\eps\quad &\text{if } Re_1\cdot S_\eps e_1 =0\\
		R_\eps=R\text{ and } S_\eps = S\quad &\text{if } Re_1\cdot S_\eps e_1 >0
	\end{cases}
\end{align}
By design, the sequence $(u_\eps)_\eps$ has the same mean value as $u$, satisfies \eqref{diffinclusion}, and converges to $u$ in $W^{1,p}(\Omega;\R^2)$ due to \eqref{weaklimit_v}.
It remains to prove that this sequence also satisfies the Ciarlet-Ne\v{c}as condition on $\Omega$ so that $(u_\eps)_\eps\subset \Acal$, cf.~\eqref{Acal}.
Since each $u_\eps$ fulfills $\det \nabla u_\eps>0$ a.e.~in $\Omega$ this task is equivalent to establishing the injectivity of $u_\eps$, see e.g.,~\cite[Proposition 4.2]{GiP08}.
As $u_\eps$ and $v_\eps$ differ differ only by a global translation, it suffices to show that $v_\eps$ is injective. 
This can be seen directly by considering the explicit construction
\begin{align*}
	v_\eps(\eps k + x) = d_\eps + \eps(\lambda S_\eps + (1-\lambda)R_\eps)k +
		\begin{cases}
			S_\eps x &\text{ if } x\in \eps Y_1,\\
			R_\eps x + \eps\lambda (S_\eps-R_\eps)(e_1+e_2) &\text{ if } x\in \eps Y_3,\\
			(S_\eps e_1|R_\eps e_2) x + \eps\lambda (S_\eps-R_\eps)e_2 &\text{ if } x\in \eps Y_2,\\
			(R_\eps e_1|S_\eps e_2) x + \eps\lambda (S_\eps-R_\eps)e_1 &\text{ if } x\in \eps Y_4,\\
		\end{cases}
\end{align*}
for a suitable global translation $d_\eps\in\R^2$, and that $\lambda S_\eps +(1-\lambda) R_\eps$, $(S_\eps e_1|R_\eps e_2)$, $(R_\eps e_1|S_\eps e_2)$ have positive determinants, since $S_\eps e_1 \cdot R_\eps e_1 >0$ and
\begin{align*}
	\det(\lambda S_\eps + (1-\lambda) R_\eps) = \lambda^2 + (1-\lambda)^2 + 2\lambda(1-\lambda) S_\eps e_1 \cdot R_\eps e_1 = |\Ystiff| + |\Ysoft|S_\eps e_1 \cdot R_\eps e_1 > |\Ystiff|.
\end{align*}
\end{proof}

\begin{remark}[Discussion of \boldmath{$K$}]\label{rem:Klambda}
	
	a) In Proposition \ref{prop:K_lambda}, we established that weak limits of sequences in $\Acal$ that satisfy \eqref{diffinclusion} are characterized by affine functions with gradient in 
	$K$ defined as in \eqref{Klambda}. 
	In the following, we shall prove the second identity in this equation.
	We first observe that 
	\begin{align*}
		\SO(2) \subset K \subset \lambda \SO(2) + (1-\lambda)\SO(2) & \subset \bigcup_{\mu \in [0,1]} \mu \SO(2) + (1-\mu) \SO(2)\\
		& =  \{F\in \R^{2\times 2}: |Fe_1|\leq 1, Fe_2=(Fe_1)^\perp\} = \SO(2)^{\rm c},
	\end{align*}
	which shows that every $F\in K$ is a conformal contraction.
	Furthermore, the set can be simplified to
	\begin{align*}
		K = \{\alpha Q: |\Ystiff|\leq \alpha^2\leq 1, Q\in \SO(2)\},
	\end{align*}
	since for every $R,S\in \SO(2)$ with $Se_1\cdot Re_1\geq 0$ it holds that
	\begin{align}\label{det_Klamba}
		\det(\lambda S + (1-\lambda) R) = |\Ystiff| + |\Ysoft|Se_1 \cdot Re_1 \geq |\Ystiff|.
	\end{align}	
	
	\smallskip
	
	b) The Poisson's ratio $\nu$ corresponding to every non-trivial affine deformation with gradient $\alpha Q$ for $\sqrt{|\Ystiff|}\leq \alpha< 1$ and $Q\in\SO(2)$ satisfies
	\begin{align*}
		\nu = - \frac{\alpha - 1}{\alpha - 1} = -1.
	\end{align*}
	This is a confirmation of the calculations in \cite{GrE00, KoV13} via a variational perspective.
\end{remark}

\subsection{Homogenization}\label{sec:rigid_hom2}
Now that the set of admissible limit deformations in the fully rigid setting is characterized, we are in the position to prove a corresponding $\Gamma$-convergence result.
Here, we consider energy functionals of integral type with integrand $W_\eps$ as in \eqref{density} with $\Wsoft$ as in \eqref{Wsoft} and $\eWstiff$ replaced by \eqref{Wrig2}.
Note that in this scenario $W_\eps$ does, in fact, not depend on $\eps$, which is why we write $W$ instead of $W_\eps$ throughout this section.

\begin{theorem}[Homogenization of rigid checkerboard structures]\label{theo:homogenization_rigid}
Let $\Omega\subset \R^2$ be a bounded Lipschitz domain, $p \geq 2$, and let $\Ical_\eps$ for $\eps>0$ as in~\eqref{energy}, \eqref{Acal}, and \eqref{density} with $\eWstiff$ replaced by \eqref{Wrig2} and $\Wsoft$ as in \eqref{Wsoft} such that $\Wsoft^\qc$ is polyconvex. 
Then, the family of functionals $(\Ical_\eps)_\eps$ $\Gamma$-converges for $\eps\to 0$ with respect to the strong $L^p(\Omega;\R^2)$-topology to $\Ical_{\rm hom}$ as in \eqref{Gamma_limit}-\eqref{Whom}.

Moreover, any sequence $(u_\eps)_\eps\subset L^p_0(\Omega;\R^2)$ with $\sup_{\eps}\Ical_\eps(u_\eps)<\infty$ has a subsequence that converges weakly in $W^{1,p}(\Omega;\R^2)$ to some affine function $u:\Omega\to \R^2$ with vanishing mean value and $\nabla u\in K$, cf.~\eqref{Klambda}.
\end{theorem}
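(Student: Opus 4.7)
The plan is to establish the three $\Gamma$-convergence ingredients---compactness, the $\liminf$ inequality, and a recovery sequence---in turn, building on the analysis in Sections~\ref{sec:rigid_aux} and~\ref{sec:rigid_deform}.

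Compactness is essentially immediate: finite energy of $u_\eps$ forces $\nabla u_\eps \in \SO(2)$ a.e.~on $\Omega\cap\eps\Ystiff$, while~\eqref{Wsoft} yields a uniform $L^p$-bound of $\nabla u_\eps$ on $\Omega\cap\eps\Ysoft$. Together with the zero-mean condition and Poincaré's inequality this provides a uniform $W^{1,p}(\Omega;\R^2)$-bound, and Proposition~\ref{prop:K_lambda}\;a) identifies any weak subsequential limit as an affine map with gradient in $K$.

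For the $\liminf$ inequality, I would pass to a subsequence along which $\Ical_\eps(u_\eps)$ is bounded and $u_\eps\to u$ in $L^p(\Omega;\R^2)$, so that $u$ is affine with $\nabla u = F = \lambda S + (1-\lambda)R \in K$ by the compactness step. Fixing $\Omega'\Subset\Omega$, the local-global rigidity argument in Step~1 of the proof of Proposition~\ref{prop:K_lambda}\;a) produces rotations $S_\eps,R_\eps\in\SO(2)$ with $S_\eps e_1\cdot R_\eps e_1>0$ such that $\nabla u_\eps = S_\eps$ on every $\eps(k+Y_1)\subset\Omega'$ and $\nabla u_\eps = R_\eps$ on every $\eps(k+Y_3)\subset\Omega'$. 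Corollary~\ref{cor:preservation} then gives $\nabla u_\eps = (S_\eps e_1\,|\,R_\eps e_2) + \nabla \varphi_\eps^{(k)}$ on each soft tile $\eps(k+Y_2)\subset\Omega'$ with $\varphi_\eps^{(k)}\in W^{1,p}_0(\eps(k+Y_2);\R^2)$, and analogously on $\eps(k+Y_4)$. Because $\Wsoft^\qc$ is quasiconvex (being polyconvex) and $\Wsoft\geq\Wsoft^\qc$, definition~\eqref{qc_envelope} yields the tile-wise bound
\begin{align*}
	\int_{\eps(k+Y_2)} \Wsoft(\nabla u_\eps)\,\dd x \geq |\eps Y_2|\,\Wsoft^\qc(S_\eps e_1\,|\,R_\eps e_2),
\end{align*}
and an analogous inequality on $Y_4$-tiles. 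Summing over all tiles in $\Omega'$, extracting further subsequences with $S_\eps\to S$ and $R_\eps\to R$ satisfying $Se_1\cdot Re_1\geq 0$ and $\lambda S + (1-\lambda)R = F$, invoking the continuity of $\Wsoft^\qc$ together with $|Y_2|=|Y_4|=\tfrac{1}{2}|\Ysoft|$, and letting $\Omega'\nearrow\Omega$ gives $\liminf_{\eps\to 0}\Ical_\eps(u_\eps) \geq |\Omega|\Whom(F)$ after minimizing over the admissible pair $(R,S)$.

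For the $\limsup$, given $u$ affine with $\nabla u = F\in K$, I would select $(R,S)\in\SO(2)^2$ attaining the minimum in~\eqref{Whom} and start from the piecewise affine admissible sequence $(v_\eps)_\eps\subset\Acal$ supplied by Proposition~\ref{prop:K_lambda}\;b), which already carries the correct gradient structure on every tile. Onto each soft tile of type $Y_2$ (respectively $Y_4$) I would superimpose a rescaled, periodically replicated $W^{1,p}_0$-perturbation that nearly attains the infimum defining $\Wsoft^\qc(Se_1\,|\,Re_2)$ (respectively $\Wsoft^\qc(Re_1\,|\,Se_2)$). The main obstacle will be to keep the resulting sequence inside $\Acal$, i.e.~to preserve positivity of the Jacobian and the Ciarlet-Ne\v{c}as condition globally; since the stiff tiles retain $\SO(2)$-gradients and $v_\eps$ is injective, this is a purely local issue on the soft tiles, which can be resolved by density of admissible test fields under the polyconvexity assumption on $\Wsoft^\qc$, following the relaxation framework of~\cite{CoD15}. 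A diagonal extraction then produces a sequence $u_\eps\to u$ in $L^p(\Omega;\R^2)$ with $\Ical_\eps(u_\eps)\to |\Omega|\Whom(F)$, and the closing subsequential convergence statement of the theorem is a direct restatement of the compactness step.
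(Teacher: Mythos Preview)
Your proposal is correct and follows essentially the same route as the paper: compactness via Proposition~\ref{prop:K_lambda}\,a), the lower bound by the tile-wise quasiconvexity estimate on the soft cells followed by passage to the limit in $S_\eps,R_\eps$, and the upper bound by augmenting the piecewise affine sequence of Proposition~\ref{prop:K_lambda}\,b) with the orientation-preserving relaxation of Conti--Dolzmann~\cite{CoD15} on each soft tile, then diagonalizing. One small correction: in the lower bound you should invoke \emph{lower semicontinuity} of $\Wsoft^\qc$ (which is all that polyconvexity guarantees for an extended-valued density) rather than continuity, since $\Wsoft^\qc$ may blow up as $\det F\to 0^+$; this is exactly what the paper does and is all that is needed for the $\liminf$ inequality.
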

\begin{proof}
\textit{Step 1: The lower bound.}
Let $(u_\eps)_\eps\subset L^p_0(\Omega;\R^2)$ be strongly convergent with limit $u\in L^p_0(\Omega;\R^2)$ and
\begin{align*}
	\lim_{\eps\to 0}\Ical_\eps(u_\eps) = \liminf_{\eps\to 0}\Ical_\eps(u_\eps)<\infty.
\end{align*} 
In particular, it holds that $(u_\eps)_\eps\subset \Acal$, the sequence satisfies \eqref{diffinclusion}, and has a (non-relabeled) subsequence with $u_\eps \weakly u$ in $W^{1,p}(\Omega;\R^2)$ for some $u\in W^{1,p}(\Omega;\R^2)$ due to \eqref{Wsoft} and the specific choice \eqref{Wrig2} for $\Wrig$.
In view of Proposition~\ref{prop:K_lambda}\,a), we find that $u$ is affine with $\nabla u=F\in K$.

To show the liminf-inequality, let $\Omega'\Subset \Omega$ be an arbitrary subset and let $\eps$ be sufficiently small. 
Exploiting the non-negativity of $\Wsoft$, the splitting~\eqref{splitting1} together with \eqref{nablav_eps}, and the fact that $W^\qc$ is $W^{1,p}$-quasiconvex as a polyconvex function (see \cite[Lemma 2.5]{MVGSN20}) then produce
\begin{align*}
	\int_{\Omega} W(\tfrac{x}{\eps},\nabla u_\eps) \dd{x} &\geq \sum_{k\in J_\eps'}\int_{\eps(k+Y_2)} \Wsoft\big((S_\eps e_1|R_\eps e_2) + \nabla \varphi_\eps\big) \dd x +  \int_{\eps(k+Y_4)} \Wsoft\big((R_\eps e_1|S_\eps e_2) + \nabla \varphi_\eps\big) \dd x\\ 
	&\geq \sum_{k\in J_\eps'}  \int_{\eps(k+Y_2)} \Wsoft^\qc\big((S_\eps e_1|R_\eps e_2) + \nabla \varphi_\eps\big)\dd x +  \int_{\eps(k+Y_4)} \Wsoft^\qc\big((R_\eps e_1|S_\eps e_2) + \nabla \varphi_\eps\big) \dd x \\
	&\geq \sum_{k\in J_\eps'}  \lambda(1-\lambda)\eps^2\big(\Wsoft^\qc(S_\eps e_1|R_\eps e_2) + \Wsoft^\qc(R_\eps e_1|S_\eps e_2)\big) \\
	&\geq \frac{1}{2}|\Ysoft| |\Omega'|\big(\Wsoft^\qc(S_\eps e_1|R_\eps e_2) + \Wsoft^\qc(R_\eps e_1|S_\eps e_2)\big),
\end{align*}
where $J_\eps'$ is taken as in \eqref{J_eps}; recall also that $\varphi_\eps\in W^{1,p}_0(\eps(k+Y_i);\R^2)$ for $i\in\{2,4\}$ and every $k\in J_\eps'$.

Now, let $S, R\in \SO(2)$ be the limits of $(S_\eps)_\eps$ and $(R_\eps)_\eps$ (up to a subsequence) as in the proof of Proposition \ref{prop:K_lambda} a), respectively.
Since $W^{\qc}$ is polyconvex and therefore lower semicontinuous by definition, we may pass to the limit $\epsilon\to0$ and obtain 
\begin{align*}
	\liminf_{\eps\to 0} \Ical_\eps(u_\eps)  \geq \frac{1}{2}|\Ysoft| |\Omega'| \bigl(\Wsoft^\qc(Se_1|Re_2) + \Wsoft^\qc(Re_1|Se_2)\bigr)\geq |\Omega'|\Whom(F).
\end{align*} 
Upon taking the supremum over all compactly contained $\Omega'\Subset\Omega$, we obtain the desired lower bound.

\medskip

\textit{Step 2: The upper bound.} The idea is to use the approximating sequence of Proposition~\ref{prop:K_lambda}\,b) and augment it with a suitable perturbation on the softer part to enforce optimal energy. Preserving orientation during this construction requires a subtle construction due to Conti \& Dolzmann~\cite{CoD15}.  

To be precise, let $u$ be affine with $\nabla u=F\in K$ and choose the energetically optimal $R,S\in\SO(2)$ with $Re_1\cdot Se_1\geq 0$ such that $F=\lambda S+(1-\lambda)R$, and for $\eps>0$ let $u_\eps:\R^2\to\R^2$ as in~\eqref{sequence1}, see also \eqref{rot_replacement} and \eqref{nablav_eps}.
For any $\eps$ and $k\in\R^2$, let $(\hat{u}^k_{\eps,j})_j\subset  W^{1,p}(\eps (k + Y_2);\R^2)$ be the orientation preserving sequences as in \cite[Theorem~2.1]{CoD15} such that 
\begin{align*}
	\hat{u}^k_{\eps,j}\weakly u_\eps\quad &\text{ in $W^{1,p}(\eps(k+Y_2);\R^2)$ as $j\to \infty$,}\qquad\text{and}\qquad \hat{u}^k_{\eps,j}=u_\eps\quad \text{ on }\partial(\eps k+\eps Y_2),
\end{align*}
as well as
\begin{align}\label{energy_conv}
	\limsup_{j\to \infty} \int_{\eps(k+Y_2)} \Wsoft(\nabla \hat{u}^k_{\eps,j})\dd{x}\leq \int_{\eps(k+Y_2)}\Wsoft^\qc(\nabla u_\eps) \dd{x}.
\end{align}
Analogously, we introduce $(\check{u}^k_{\eps,j})_j\subset W^{1,p}(\eps k +\eps Y_4;\R^2)$.

Let $\widetilde{\Omega}\subset\R^2$ an open set with $\Omega\Subset \widetilde{\Omega}$, and let $\tilde{J}_\eps=\{k\in\R^2 : \eps(k+Y)\subset \widetilde{\Omega}\}$. 
For sufficiently small $\eps>0$, it then holds that
\begin{align}\label{cover_Omega}
	\Omega\subset\bigcup_{k\in \tilde{J}_\eps} \eps(k+Y)\subset\widetilde{\Omega},
\end{align}
and we define for $j\in \N$ the functions
\begin{align*}
	u_{\eps,j} = \sum_{k\in \tilde{J}_\eps} \hat{u}^k_{\eps,j}\mathbbm{1}_{\eps(k+Y_2)} + \check{u}^k_{\eps,j}\mathbbm{1}_{\eps(k+Y_4)} + u_\eps\mathbbm{1}_{\eps(k+Y_1\cup Y_3)} \quad\text{on $\widetilde{\Omega}$.}
\end{align*}
Each $u_{\eps,j}$ is, by design, orientation preserving, and $u_{\eps,j}\weakly u_\eps$ in $W^{1,p}(\Omega;\R^2)$. 
Moreover, every $u_{\eps,j}$ satisfies the Ciarlet-Ne\v{c}as condition \eqref{ciarlet_necas} on every subset of $\R^2$ since $u_\eps\in\Acal$ and the perturbations $\hat{u}_{\eps,j}^k, \check{u}^k_{\eps,j}$ have a positive determinant and coincide with $u_\eps$ on the boundary of the soft parts. In light of \cite[Theorem 1]{Bal81}, the functions $u_{\eps,j}$ are globally injective and thus satisfy \eqref{ciarlet_necas} on every subset of $\R^2$, cf.~\cite[Proposition 4.2]{GiP08}.

Now, combining \eqref{cover_Omega} with the non-negativity of $\Wsoft$, $\Wrig=0$ on $\SO(2)$, with \eqref{energy_conv} produces the energy estimate
\begin{align*}
	\limsup_{j\to \infty}\int_{\Omega} &W(\nabla u_{\eps,j})\dd{x} \leq \limsup_{j\to \infty}\sum_{k\in \tilde{J}_\eps} \int_{\eps(k+\Ysoft)} \Wsoft(\nabla u_{\eps,j}) \dd{x}=\sum_{k\in \tilde{J}_\eps} \limsup_{j\to \infty}\int_{\eps(k+\Ysoft)} \Wsoft(\nabla u_{\eps,j}) \dd{x}\\
	&\leq\sum_{k\in \tilde{J}_\eps} \int_{\eps(k+\Ysoft)} \Wsoft^\qc(\nabla u_\eps)\dd{x} 
	= \lambda(1-\lambda)\sum_{k\in \tilde{J}_\eps} \eps^2 \bigl(\Wsoft^\qc(S_\eps e_1| R_\eps e_2) + \Wsoft^\qc(R_\eps e_1|S_\eps e_2)\bigr)
\end{align*}
and hence
\begin{align*}
	\limsup_{\eps\to 0}\limsup_{j\to \infty}\int_{\Omega} &W(\nabla u_{\eps,j})\dd{x} \leq \lambda(1-\lambda)|\widetilde{\Omega}| \Whom(F).
\end{align*}
Finally, we exploit that this estimate holds for arbitrary $\widetilde{\Omega}\Supset\Omega$, and we use a diagonalization argument in the sense of Attouch \cite{Att84} to select a diagonal sequence $(\bar{u}_\eps)_\eps$ with $\bar{u}_\eps=u_{\eps,j(\eps)}$ such that
\begin{align*}
	\limsup_{\eps\to 0}\int_{\Omega} &W(\nabla \bar{u}_{\eps})\dd{x} \leq \lambda(1-\lambda)|\Omega| \Whom(\nabla u)
\end{align*}
and $\bar{u}_\eps\weakly u$ in $W^{1,p}(\Omega;\R^2)$. 
Note that the uniform bounds (with respect to the index parameters) of $u_{\eps,j}$ are obtained via the coercivity of $\Wsoft$ as in \eqref{Wsoft} and the triviality of $\Wrig$ defined in \eqref{Wrig2}.
\end{proof}

\begin{remark}[Properties of \boldmath{$\Whom$}]\label{rem:limit_density}
a) The representation of $F\in K$ into $F=\lambda S + (1-\lambda)R$ for $R,S\in\SO(2)$ with $Re_1\cdot Se_1\geq 0$ is not unique.
A direct calculation based on the intersection of two circles with radii $\lambda$ and $1-\lambda$ shows that
\begin{align*}
	Se_1 = \frac{1}{2\lambda|Fe_1|^2}\big((|Fe_1|^2 + 2\lambda-1)Fe_1 \pm \sqrt{4\lambda^2|Fe_1|^2 - (|Fe_1|^2 + 2\lambda - 1)^2}Fe_2\big)
\end{align*}
and $Re_1=\frac{1}{1-\lambda}(Fe_1 - \lambda Se_1)$.

In fact, there exist exactly two choices for $R$ and $S$ if $|Fe_1|<1$, and the representation is unique if $|Fe_1|=1$.
For $\lambda=\frac{1}{2}$, this formula reduces to
\begin{align}\label{SandR}
	Se_1 = Fe_1 \pm \frac{\sqrt{1-|Fe_1|^2}}{|Fe_1|} Fe_2\qand Re_1 = Fe_1 \mp \frac{\sqrt{1-|Fe_1|^2}}{|Fe_1|}Fe_2.
\end{align}
\medskip

b) Note that if $\Wsoft$ is frame-indifferent or isotropic, i.e., $\Wsoft(QF)=\Wsoft(F)$ or $\Wsoft(FQ)=\Wsoft(F)$ for all $F\in \R^{2\times 2}$ and $Q\in \SO(2)$, then it is immediate that the quasiconvex envelope $\Wsoft^\qc$ (cf.~\eqref{qc_envelope}) is frame-indifferent or isotropic as well. 

In case $\Wsoft$ has one of these two properties then the limit density simplifies to
\begin{align*}
	\Whom(F)=\Whom( |Fe_1|\Id)
\end{align*} 
for $F\in K$. Moreover, if $\Wsoft$ is both frame-indifferent and isotropic, then 
\begin{align*}
	\Whom(F) = |\Ysoft|\min_{R, S\in \SO(2), \lambda S+(1-\lambda)R=|Fe_1|\Id, Re_1\cdot Se_1\geq 0}\Wsoft^\qc(Se_1|Re_2),
\end{align*}
since $(Re_1|Se_2) = R_{-\frac{\pi}{2}}(Se_1|Re_2)R_{\frac{\pi}{2}}$, where $R_\theta\in\SO(2)$ describes a rotation matrix by the angle $\theta\in\R$. 
The expression on the right-hand side reduces even further in the case $\lambda=\frac{1}{2}$, where we obtain the explicit formula
\begin{align*}
	\Whom(F) = |\Ysoft|\Wsoft^\qc\big(\big(|Fe_1|+ \sqrt{1-|Fe_1|^2}\big)\Id\big)
\end{align*}
with the help of \eqref{SandR}.
\end{remark}

\begin{remark}[Comparison with cell formula]\label{rem:cell_formula} 
Homogenization for integral-type functionals commonly gives rise to homogenized integrands that are defined by a (multi-)cell formula \cite{Bra85,Mul87}. 
In this remark, we explicitly compute the (multi-)cell formula corresponding to $W$ and compare the result with $\Whom$ as in \eqref{Whom}.

In the following, we consider the density
\begin{align}\label{cell}
	\Wcell(F) = \inf_{\psi\in W^{1, p}_{\#}(Y;\R^2)} \int_Y W(y, F+\nabla \psi) \dd{y} \quad \text{for $F\in \R^{2\times 2}$}
\end{align} 
taken from \cite[Equation (1.7)]{Mul87}, and prove that
\begin{align}\label{cell2}
	\Wcell =\begin{cases} \Whom & \text{on $K\setminus \sqrt{|\Ystiff|}\SO(2)$,} \\ \infty & \text{otherwise.}\end{cases} 
\end{align}
We shall point out that Theorem \ref{theo:homogenization_rigid} also holds if the Ciarlet-Ne\v{c}as condition is dropped, see Remark \ref{rem:other_constraints} a) later on. 
The identity \eqref{cell2} shows, in particular, that the two densities $\Wcell$ and $\Whom$ coincide on $K\setminus \sqrt{|\Ystiff|}\SO(2)$, but differ on $\sqrt{|\Ystiff|}\SO(2)$.
This observation stands in contrast to other homogenization results in the context of asymptotic rigidity, see \cite[Section 6]{ChK17} and \cite[Remark 5.5]{ChK20}, where the homogenized density and the cell formula coincide everywhere.

To prove \eqref{cell2}, let $F\in\R^{2\times 2}$ such that $\Wcell(F)<\infty$, which implies that there exist $\psi\in W^{1,p}_{\#}(Y;\R^2)$ and $S, R \in \SO(2)$ such that 
\begin{align*}
	F+\nabla \psi=S\quad\text{ on $Y_1$}\quad\text{and}\quad F+\nabla \psi=R\quad \text{ on $Y_3$.}
\end{align*}
By exploiting the periodicity of the boundary values of $\psi$, we can apply Corollary~\ref{cor:preservation} a) to $u(x)=Fx+\psi(x)$ for $x\in Y_2$, which produces
\begin{align*}
	F+\nabla \psi = (Se_1|Re_2) + \nabla \ffi_2\quad\text{on } Y_2
\end{align*} 
with $\ffi_2\in W_0^{1,p}(Y_2;\R^2)$, and $Se_1\cdot Re_1>0$. 
Similarly, we find $\varphi_4\in W_0^{1,p}(Y_4;\R^2)$ such that $F+\nabla \psi = (Re_1| Se_2) + \nabla \ffi_4$ on $Y_4$.

By choosing $\hat \psi =\psi-\ffi_2-\ffi_4\in W^{1,p}_{\#}(\Omega;\R^2)$, we obtain that 
\begin{align*}
	F+\nabla \hat \psi = \begin{cases}
								S & \text{on $Y_1$,}\\
								R &  \text{on $Y_3$,}\\
								(Se_1|Re_2) &  \text{on $Y_2$,}\\
								(Re_1|Se_2) & \text{on $Y_4$.}
							\end{cases}
\end{align*}
Then the periodicity of $\hat{\psi}$ yields that
\begin{align*}
	\lambda Fe_1 = \int_{Y_1\cup Y_4} Fe_1 + \partial_1\hat \psi \dd{x} = \lambda^2 Se_1 + \lambda(1-\lambda) Re_1,
\end{align*}
and hence, $Fe_1=\lambda Se_1 + (1-\lambda) Re_1$. Similarly, one can show that $Fe_2=\lambda Se_2 + (1-\lambda) Re_2$, which implies that $F\in K \setminus \sqrt{|\Ystiff|}\SO(2)$ since $Se_1\cdot Re_1>0$.

It remains to compare the values of the two functions in \eqref{cell} and \eqref{cell2} for $F\in K\setminus \sqrt{|\Ystiff|}\SO(2)$. 
Indeed, let $F=\lambda S + (1-\lambda)R$ for $S, R\in \SO(2)$ with $Se_1\cdot Re_1 > 0$ and $\Whom(F)=\Wsoft^\qc(Se_1|Re_2) + \Wsoft^\qc(Re_1|Se_2)$, then the previous calculations show that
\begin{align*}
	\Wcell(F) & = \inf_{\psi\in W_{\#}^{1,p}(Y;\R^2)} \int_{Y_{\rm soft}} \Wsoft(F+\nabla \psi) \dd{x} \\ 
		& = |Y_2|\inf_{\varphi\in W_{0}^{1,p}(Y_2;\R^2)} \dashint_{Y_2} \Wsoft\bigl((Se_1|Re_2) + \nabla \ffi\bigr) \dd{x}  \\
		& \qquad \qquad \qquad + |Y_4|  \inf_{\varphi\in W_{0}^{1,p}(Y_4;\R^2)} \dashint_{Y_4} \Wsoft\bigl((Re_1|Se_2) + \nabla \ffi\bigr) \dd{x}\\ 
		& =\frac{1}{2}|\Ysoft|\bigl( \Wsoft^\qc(Se_1|Re_2) + \Wsoft^\qc(Re_1|Se_2)\bigr) = \Whom(F).
\end{align*}
This concludes the proof of \eqref{cell2}.
The results presented above dot not change if $\Wcell$ is replaced by the multi-cell formula
\begin{align*}
	W_{\rm multi-cell}(F) := \inf_{k\in \N} \inf_{\psi\in W_{\#}^{1,p}(kY;\R^2)} \dashint_{kY} W_\eps(F+\nabla \psi) \dd{x},\quad F\in\R^{2\times 2},
\end{align*}
cf.~\cite[Equation (2.7)]{Mul87}.
\end{remark}

\subsection{Discussion of the assumptions}\label{sec:rigid_assumptions}
In this chapter, we present a critical discussion of the necessity of several model assumptions made in Section \ref{sec:rigid}.
First, we address the set of admissible functions $\Acal$, cf.~\eqref{Acal}, which consists of all Sobolev functions satisfying the Ciarlet-Ne\v cas condition \eqref{ciarlet_necas} and orientation preservation. While the macroscopic deformation behavior stays intact when dropping either of the two assumptions, see Remark \ref{rem:other_constraints} a) and b), the material can undergo infinite compression if both conditions are dropped, see Proposition \ref{prop:without_orientation}.
Second, we prove that the elastic material becomes much more flexible in the case $p<2$ due to the occurrence of microfractures at the hinges. 
This section is then concluded with two final remarks about the geometric setup of the model:  the porous case and the case of rigid rectangles.

\begin{remark}[Orientation preservation and Ciarlet-Ne\v cas]\label{rem:other_constraints}
	a) Theorem \ref{theo:homogenization_rigid} and Proposition \ref{prop:K_lambda} remain true if the Ciarlet-Ne\v{c}as condition \eqref{ciarlet_necas} on $\Omega$ in the definition of $\Acal$, see \eqref{Acal}, is dropped.
	Indeed, the compactness and lower bound do not require non-interpenetration of matter at all, while the recovery sequences sequence designed in Proposition \ref{prop:K_lambda} b) and in Step 2 of the proof of Theorem \ref{theo:homogenization_rigid} satisfy this constraint automatically.
	
	\medskip
	
	b) For $p>2$ we shall also point out that Proposition \ref{prop:K_lambda} is true if the orientation preservation $\det \nabla u>0$ a.e.~in $\Omega$ is dropped instead of the Ciarlet-Ne\v{c}as condition \eqref{ciarlet_rigid} on $\Omega$.
	In fact, we merely need to replace Corollary \ref{cor:preservation} by a variant that also considers the full neighboring stiff squares, see Proposition \ref{lem:luftschloss} later in Section \ref{sec:elastic}.
	The result essentially stays the same with the minor adjustment, that the rotations $S,R\in\SO(2)$ in Corollary \ref{cor:preservation} a) satisfy $Se_1\cdot Re_1 \geq 0$ instead of $Se_1\cdot Re_1 >0$.	
	
	\medskip
	
	c) Exchanging the constraint of orientation preservation $\det \nabla u>0$ a.e.~in $\Omega$ in the definitions of $\Acal$ and $\Wsoft$, cf.~\eqref{Acal} and \eqref{Wsoft}, by incompressibility, that is,
	\begin{align*}
		\det \nabla u =1 \ \text{a.e.~in $\Omega$},
	\end{align*}  
	results in a fully rigid limit set and a trivial $\Gamma$-limit. 
	Precisely, the energy sequence $(\Ical_\eps)_\eps$ $\Gamma$-converges with respect to the strong topology in $L^p_0(\Omega;\R^2)$ to
	\begin{align*}
		\Ical_{\rm hom}: L^p_0(\Omega;\R^2) \to [0,\infty],\ u\mapsto \begin{cases} 
						0 & \text{if $\nabla u=R\in\SO(2)$,}\\
						\infty & \text{otherwise.}
					\end{cases}
	\end{align*}
	This is a direct consequence of Corollary \ref{cor:preservation} b) and the proof of Proposition \ref{prop:K_lambda} a).
	In fact, for any sequence $(u_\eps)_\eps$ of bounded energy there is $R\in\SO(2)$ such that $\nabla u_\eps \weakly \lambda R+(1-\lambda)R =R$.
	As for the energetic adjustment of the approximating sequence in Proposition \ref{prop:K_lambda} b), we invoke \cite[Theorem 2.4]{CoD15} instead of Theorem \cite[Theorem 2.1]{CoD15}.
\end{remark}

As discussed in Remark \ref{rem:other_constraints} a) and b), the macroscopic deformation behavior (see Proposition \ref{prop:K_lambda}) still holds true if either the Ciarlet-Ne\v{c}as condition \eqref{ciarlet_necas} or the orientation preservation is dropped. 
We shall now discuss the scenario where all functions in $W^{1,p}(\Omega;\R^2)$ are admissible. In this setting, the set of admissible limit deformations can become larger, even allowing for infinite conformal compression, as the following result proves.

\begin{proposition}\label{prop:without_orientation}
Let $\lambda=\tfrac{1}{2}$ and $p\geq 2$.

a) If $(u_\eps)_\eps\subset W^{1,p}(\Omega;\R^2)$ converges weakly in $W^{1,p}(\Omega;\R^2)$ to some $u\in W^{1,p}(\Omega;\R^2)$, and satisfies the inhomogeneous constraint \eqref{diffinclusion}, then $u$ is affine with
\begin{align}\label{Klambda_without}
	\nabla u\in \lambda\SO(2) + (1-\lambda)\SO(2) = \{\alpha Q:  0=|\Ystiff| - |\Ysoft|\leq \alpha \leq 1,\, Q\in \SO(2)\}.
\end{align}

b) For every affine $u:\Omega\to \R^2$ with gradient in $\lambda\SO(2) + (1-\lambda)\SO(2)$ there exists a sequence of piecewise affine functions $(u_\eps)_\eps\subset  W^{1,p}(\Omega;\R^2)$ satisfying~\eqref{diffinclusion} and $\int_\Omega u_\eps \dd{x} = \int_\Omega u\dd{x}$ such that $u_\eps\weakly u$ in $W^{1,p}(\Omega;\R^2)$.

\end{proposition}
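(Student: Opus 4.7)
The plan is to follow the structure of Proposition~\ref{prop:K_lambda}\,a), replacing Corollary~\ref{cor:preservation} by the more permissive Lemma~\ref{lem:affine} since orientation preservation is no longer available. First, apply Reshetnyak's rigidity theorem on each connected component of $\Omega \cap \eYstiff$ to obtain rotations $S_\eps^k \in \SO(2)$ on $\eps(k+Y_1)$ and $R_\eps^k \in \SO(2)$ on $\eps(k+Y_3)$. Then invoke Lemma~\ref{lem:affine}\,a) on each soft tile; since $\lambda = \tfrac{1}{2}$ puts us in Case~1 of the lemma with $\mu=1$, the admissible boundary rotations lie in $\{R, S, \pm R^\perp, \pm S^\perp\}$ (using $F_\pm(R,S,1)=\pm R^\perp$, $G_\pm(R,S,1)=\pm S^\perp$), which allows both the rhombus/line cases $i), ii)$ and the reflected cases $iii), iv)$ that were excluded under orientation preservation.

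The main obstacle --- and the core of the proof --- is to show that the joint compatibility of the cases at $Y_2$ and $Y_4$ across neighboring cells still forces the rotation field $k \mapsto (S_\eps^k, R_\eps^k)$ to collapse (at each fixed $\eps$) to two globally constant rotations $S_\eps, R_\eps \in \SO(2)$, thereby recovering the structure of Proposition~\ref{prop:K_lambda}\,a) but without the sign constraint $Se_1 \cdot Re_1 \geq 0$. The underlying mechanism is that rhombus cases $i), ii)$ equate opposite rotations, while reflected cases $iii), iv)$ relate adjacent rotations by fixed $\pm\pi/2$-rotations. A careful combinatorial bookkeeping then shows that any isolated occurrence of a reflected configuration on some soft tile propagates a $\pm\pi/2$-shift which, at a neighboring soft tile, produces a boundary configuration incompatible with every case of Lemma~\ref{lem:affine}\,a) --- yielding a contradiction. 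The special role of $\lambda=\tfrac{1}{2}$ enters decisively here: both soft tiles are squares with identical dimensions, so the constraints at $Y_2$ (linking rotations along $e_1$ and $e_2$) and at $Y_4$ (linking them along the swapped directions) are rigidly intertwined and forbid persistent ``fault-line'' configurations.

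Once the reduction to two constants $S_\eps, R_\eps$ is in place, Step~2 of the proof of Proposition~\ref{prop:K_lambda}\,a) transfers verbatim: decompose $u_\eps = v_\eps + \varphi_\eps$ with $v_\eps$ as in~\eqref{nablav_eps} and $\varphi_\eps$ vanishing on $\Omega \cap \eYstiff$, obtain $\nabla \varphi_\eps \weakly 0$ by the Gauss--Green argument tested against characteristic functions of squares, and derive $\nabla v_\eps \weakly \tfrac{1}{2}(S+R)$ by Riemann--Lebesgue for subsequential limits $S, R \in \SO(2)$ of $(S_\eps)_\eps, (R_\eps)_\eps$. Hence $u$ is affine with $\nabla u = \tfrac{1}{2}(S+R) \in \tfrac{1}{2}\SO(2) + \tfrac{1}{2}\SO(2)$. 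For Part~b), given such $F = \tfrac{1}{2}(S+R)$, the recovery sequence is built via~\eqref{nablav_eps} and~\eqref{sequence1} with $S_\eps = S$ and $R_\eps = R$; since orientation preservation and the Ciarlet--Ne\v{c}as condition are dropped, the sign condition $Se_1 \cdot Re_1 \geq 0$ and the perturbation~\eqref{rot_replacement} are no longer needed, and the weak convergence to $u$ (with the mean value adjusted as in \eqref{sequence1}) is again a direct consequence of Riemann--Lebesgue.
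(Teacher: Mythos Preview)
Your proposal is correct and follows essentially the same route as the paper. One nuance in the combinatorial core worth flagging: not every reflected configuration yields an outright contradiction---in the paper's case analysis (organizing the local possibilities at $Y_2$ and $Y_4$ into three classes) some mixed cases, with one soft tile in a reflected configuration and the other not, are internally consistent but force the additional relation $R_\eps^k = \pm(S_\eps^k)^\perp$, which still collapses to the two-global-rotation structure; only the doubly-reflected configurations lead to contradictions in every subcase.
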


\begin{proof}
a) Let $\Omega'\Subset\Omega$ be arbitrary. For the characterization of limit deformations, one needs to understand the large scale compatibilities between the basic building blocks resulting from the proof of Lemma~\ref{lem:affine}. The latter can be classified in three classes: 
\begin{center}
	$\circled{1} \ S \to R \to S \to  R$, $\circled{2}\ R^\perp\to R \to S \to  -S^\perp$, $\circled{3}\ S\to R \to -R^\perp \to S^\perp$, 
\end{center} 
see Figure~\ref{fig3} (rotations listed in clockwise direction, starting at the bottom). 
\begin{figure}
	\centering
	\begin{tikzpicture}
		\begin{scope}[xshift = -6.5cm, yshift = 0cm, scale = 1]
			\draw[fill=yellow]  (0, 1) --  (0,0) -- (1,0)  -- (1,1) -- (0,1); 
			\draw[fill=ForestGreen!70!white]  (1, 1) --  (1,2) -- (2,2)  -- (2,1) -- (1,1); 
			\draw[fill=yellow]  (2, 1) --  (3,1) -- (3,0)  -- (2,0) -- (2,1); 
			\draw[fill=ForestGreen!70!white]  (1, 0) --  (2,0) -- (2,-1)  -- (1,-1) -- (1,0); 
			
			\draw (0.5,0.5) node {$R$}; 
			\draw (1.5,1.5) node {$S$};
			\draw (2.5,0.5) node {$R$}; 
			\draw (1.5,-0.5) node {$S$};
			
			\draw (0,2) node {$\circled{1}$};
		\end{scope}
		
		\begin{scope}[xshift = -2cm, yshift = 0cm, scale = 1]
			\draw[fill=yellow]  (0, 1) --  (0,0) -- (1,0)  -- (1,1) -- (0,1); 
			\draw[fill=ForestGreen!70!white]  (1, 1) --  (1,2) -- (2,2)  -- (2,1) -- (1,1); 
			\draw[fill=ForestGreen!40!white]  (2, 1) --  (3,1) -- (3,0)  -- (2,0) -- (2,1); 
			\draw[fill=yellow!70!orange]  (1, 0) --  (2,0) -- (2,-1)  -- (1,-1) -- (1,0); 
			
			\draw (0.5,0.5) node {$R$}; 
			\draw (1.5,1.5) node {$S$};
			\draw (2.5,0.5) node {$-S^\perp$}; 
			\draw (1.5,-0.5) node {$R^\perp$};
			
			\draw (0,2) node {$\circled{2}$};
		\end{scope}
		
		\begin{scope}[xshift = 2.5cm, yshift = 0cm, scale = 1]
			\draw[fill=yellow]  (0, 1) --  (0,0) -- (1,0)  -- (1,1) -- (0,1); 
			\draw[fill=yellow!40!orange]  (1, 1) --  (1,2) -- (2,2)  -- (2,1) -- (1,1); 
			\draw[fill=ForestGreen!100!white]  (2, 1) --  (3,1) -- (3,0)  -- (2,0) -- (2,1); 
			\draw[fill=ForestGreen!70!white]  (1, 0) --  (2,0) -- (2,-1)  -- (1,-1) -- (1,0); 
			
			\draw (0.5,0.5) node {$R$}; 
			\draw (1.5,1.5) node {$-R^\perp$};
			\draw (2.5,0.5) node {$S^\perp$}; 
			\draw (1.5,-0.5) node {$S$};
			\draw (0,2) node {$\circled{3}$};
		\end{scope}
	\end{tikzpicture}
	\caption{Possible configurations of rotation matrices arranged around a square for $\lambda=\frac{1}{2}$}\label{fig3}
\end{figure}
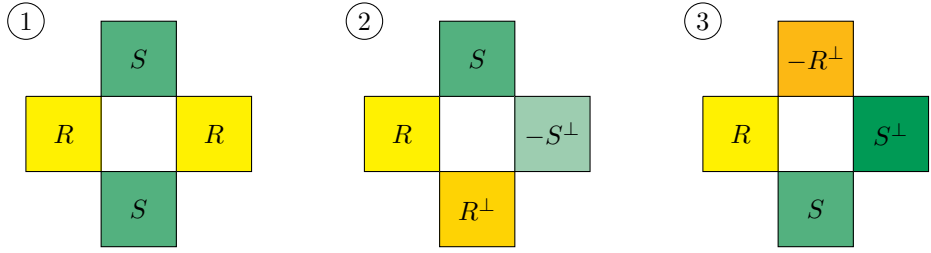

Hence, we have that for any $\eps(k+Y_2)$ and $\eps(k+Y_4)$ with $k\in J_\eps'$ (see \eqref{J_eps}), that $\nabla u_\eps$ restricted to $\eps(k+Y)$
fits in one of the three scenarios described above, where $[k+Y_2:i]$ means that the rotations on neighboring squares of $\eps(k+Y_2)$ behave like $\circled{i}$ for $i\in \{1,2,3\}$, and analogously for $[k+Y_4:i]$ on $\eps(k+Y_4)$. 
In the following, we use the notation $S_\eps^k, R_\eps^k$ for the rotation matrices satisfying $\nabla u_\eps= R_\eps^k$ on $\eps(k+Y_3)$ and $\nabla u_\eps = S_\eps^k$ on $\eps(k+Y_1)$ , respectively.

\begin{itemize}
	\item \textit{Class I:} $[k+Y_2:1]$ \& $[k+Y_4:1]$. In this case, $S_\eps^{k} = S_{\eps}^{k+e_1} = S_{\eps}^{k+e_2}$ and $R_\eps^{k} = R_\eps^{k-e_1} = R_\eps^{k-e_2}$.  
	\item \textit{Class II:} $[k+Y_2:1]$ \& $[k+Y_4:2]$, $[k+Y_2:2]$ \& $[k+Y_4:1]$, $[k+Y_2:3]$ \& $[k+Y_4:1]$, $[k+Y_2:1]$ \& $[k+Y_4:3]$, $[k+Y_2:2]$ \& $[k+Y_4:3]$, $[k+Y_2:3]$ \& $[k+Y_4:2]$.
	
	In the following, we provide a detailed explanation of the first of the above-mentioned cases. All other scenarios can be handled analogously.
	We assume that $[k+Y_2:1]$ \& $[k+Y_4:2]$, i.e.,
	\begin{align}\label{puzzle}
		R_\eps^{k-e_2} = (S_\eps^k)^\perp\qand S_\eps^{k+e_1} = -(R_\eps^k)^\perp,
	\end{align}		
	and perform a case study. 
	Suppose first that $[k-e_2+Y_2:1]$, then the first equation in \eqref{puzzle} produces
	\begin{align}\label{puzzle0}
		S_\eps^{k-e_2} = S_{\eps}^k = S_\eps^{k+e_2}\qand R_\eps^{k-(1,1)} = R_\eps^{k-e_2} = (S_\eps^{k})^\perp.
	\end{align}
	If $[k-e_1+Y_4:1]$, we derive from \eqref{puzzle} and \eqref{puzzle0} the equations
	\begin{align}\label{puzzle1}
		(S_\eps^{k})^\perp = R_\eps^k = R_\eps^{k-e_1} = R_\eps^{k-(1,1)} = R_\eps^{k-e_2} \qand (-R_\eps^k)^\perp = S_\eps^{k} = S_\eps^{k+e_2} = S_\eps^{k-e_2}.
	\end{align}
	In the case $[k-e_1+Y_4:2]$ the identities in \eqref{puzzle1} also hold true. In these two cases, we obtain the class I case $[k+Y_2:1]$ \& $[k+Y_4:1]$ with $R_\eps^k = (S_\eps^k)^\perp$.
	If $[k-e_1+Y_4:3]$, then \eqref{puzzle0} yields that
	\begin{align*}
		S_\eps^k = (R_\eps^{k-(1,1)})^\perp = ((S_\eps^{k})^\perp)^\perp = -S_\eps^k,
	\end{align*}
	which is a contradiction.
	Suppose second that $[k-e_2+Y_2:2]$, then \eqref{puzzle0} generates the next contradiction
	\begin{align*}
		S_\eps^k = (R_\eps^{k-(1,1)})^\perp = ((S_\eps^k)^\perp)^\perp = - S.
	\end{align*}
	Finally, assume that $[k-e_2+Y_2:3]$, then it holds that
	\begin{align*}
		R_\eps^{k-(1,1)} = (S_\eps^k)^\perp\qand S_\eps^{k-e_2} = - (R_\eps^{k-e_2})^\perp,
	\end{align*}
	which we combine with \eqref{puzzle} to produce \eqref{puzzle1} again, since $[k-e_1+Y_2:1]$ is automatically satisfied.
	
	All the other scenarios mentioned above can be handled analogously and reduced to the special case of class I with one of the two additional relations $S_\eps^k = (R_\eps^k)^{\perp}$ or $R_\eps^k=(S_\eps^k)^{\perp}$.
	\item \textit{Class III:} $[k+Y_2:2]$ \& $[k+Y_4:2]$, $[k+Y_2:3]$ \& $[k+Y_4:3]$. Here, checking the different combinations of $\circled{i}$ for the restriction of $u_\eps$ to $\eps(k - e_2 +Y_2)$  and $\eps(k-e_1+Y_4)$ yields a contradiction in each case.
\end{itemize}

In summary, the only relevant class to consider is class I, and applying the implications for any $k\in J_\eps'$ yields that $S_\eps^k =S_\eps$ and $R_\eps^k=R_\eps$ for all $k\in J_\eps'$ and suitable $S_\eps, R_\eps\in \SO(2)$. 
We can now proceed as in Step 2 of the proof of Proposition \ref{prop:K_lambda} a).

The identity in \eqref{Klambda_without} can be shown as in \eqref{det_Klamba} considering that $Se_1 \cdot Re_1 \in [-1,1]$ for any $S,R\in \SO(2)$.
\medskip

b) Since we merely need to recover affine functions with gradient in $\lambda\SO(2) + (1-\lambda)\SO(2)$, the proof is almost identical with that of Proposition~\ref{prop:K_lambda} b). 
The only difference is that we can omit the scalar product $S_\eps e_1 \cdot R_\eps e_1>0$ since it does not appear in this context without orientation preservation.
\end{proof}

Whereas the deformations in the case $p \geq 2$ are strongly restricted, one observes, in accordance with intuition, much softer material behavior, as soon as microfracture in the form of discontinuities in the joints occur. 
For this next proposition, we require a suitable extension result, which we state and prove directly after.
 
\begin{proposition}[Affine limit deformations for \boldmath{$p<2$}]\label{prop:anythingcanhappen}
	Let $1<p<2$, then any affine map $u:\Omega\to \R^2$ can be approximated weakly in $W^{1,p}(\Omega;\R^2)$ by a sequence $(u_\eps)_\eps\subset W^{1,p}(\Omega;\R^2)$ in such a way that $\int_{\Omega} u_\eps\dd{x}=\int_\Omega u\dd{x}$ and
 	\begin{align*}
 		\nabla u_\eps \in \SO(2)  \text{a.e.~in $\Omega\cap\eYstiff$.}
 	\end{align*}
\end{proposition}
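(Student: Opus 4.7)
The plan is to exploit the fact that, when $1<p<2$, isolated points in $\R^2$ have vanishing $p$-capacity, so that $W^{1,p}$-functions may accommodate jumps concentrated at isolated points without losing regularity. Since adjacent stiff squares of the checkerboard meet only at the cell vertices, this permits me to prescribe \emph{independent} rigid motions on every single stiff tile, even when neighbouring motions are mutually incompatible at the touching corners -- the microfracture-at-the-joints effect alluded to just before the proposition.

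Concretely, given the affine map $u(x)=Fx+c$, I would build $u_\eps$ tile by tile. On each connected stiff square $S=\eps(k+Y_i)$, $i\in\{1,3\}$, of centre $x_S$, set
\[
u_\eps(x):=u(x_S)+(x-x_S),
\]
so that $\nabla u_\eps=\Id\in\SO(2)$ on $\Omega\cap\eYstiff$ and $|u_\eps-u|\leq C\eps$ on $S$. On each soft rectangle $T$, the four sides already inherit piecewise-affine Dirichlet data from the four surrounding rigid motions, but these data generically disagree at every corner of $T$ by a jump of magnitude $\mathcal{O}(\eps)$. I extend the boundary data into $T$ by a polar-coordinate interpolation near each corner: at a corner $y_0$ with the two adjacent sides carrying traces $g_1(r)$ and $g_2(r)$ parametrised by the distance $r$ from $y_0$, I would set
\[
u_\eps(r,\theta):=g_1(r)\bigl(1-\phi(\theta)\bigr)+g_2(r)\phi(\theta),
\]
with $\phi$ a smooth monotone cut-off between the two angular boundary directions, and then glue the four local corner constructions on $T$ by a smooth partition of unity.

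The critical estimate lies near the corners: $\partial_r u_\eps$ is bounded, whereas the angular derivative
\[
\tfrac{1}{r}\partial_\theta u_\eps=\tfrac{1}{r}\bigl(g_2(r)-g_1(r)\bigr)\phi'(\theta)=\mathcal{O}(\eps/r)
\]
inherits the $\mathcal{O}(\eps)$ jump between the two boundary traces at $y_0$. Its $L^p$-norm on a sector $\{0<r<\eps/2\}$ scales as $\bigl(\int_0^{\eps/2}(\eps/r)^p\,r\dd r\bigr)^{1/p}\leq C\eps^{2/p}$, which is finite precisely because $p<2$; summing the $p$-th powers over the $\mathcal{O}(\eps^{-2})$ corners in $\Omega$ produces $\|\nabla u_\eps\|_{L^p(\Omega)}^p\leq C\eps^{-2}\cdot\eps^2=\mathcal{O}(1)$. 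Simultaneously, $u_\eps$ is $L^\infty$-bounded by its boundary data and satisfies $|u_\eps-u|\leq C\eps$ pointwise (both traces at any corner lie within $\mathcal{O}(\eps)$ of $u$ at that corner), so $u_\eps\to u$ uniformly and in particular in $L^p(\Omega;\R^2)$. Together with the uniform $W^{1,p}$-bound this forces $u_\eps\weakly u$ in $W^{1,p}(\Omega;\R^2)$.

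The hard part -- really the only technical obstacle -- is the scaling $\int_0^R r^{1-p}\dd r<\infty\iff p<2$, which keeps the $1/r$-type gradient singularities caused by the corner jumps $L^p$-integrable; for $p\geq 2$ the same construction would produce an unbounded $L^p$-gradient, in full agreement with the rigidity of Proposition~\ref{prop:K_lambda}. Finally, the zero-mean-value condition $\int_\Omega u_\eps\dd x=\int_\Omega u\dd x$ is arranged by replacing $u_\eps$ with $u_\eps+\dashint_\Omega(u-u_\eps)\dd x$; this constant shift preserves both the inclusion $\nabla u_\eps\in\SO(2)$ on $\Omega\cap\eYstiff$ and the weak convergence, since the correction vanishes as $\eps\to 0$.
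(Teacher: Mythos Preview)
Your approach is correct and takes a genuinely different route from the paper's. The paper encapsulates the ``corners have zero $p$-capacity for $p<2$'' phenomenon in an abstract extension operator (Lemma~\ref{lem:extension2}\,b)), which in turn rests on Grisvard's trace-space characterisation: for $p<2$ the trace space of $W^{1,p}$ on a polygon is simply $\prod_i W^{1-1/p,p}(\Gamma_i)$ \emph{without} compatibility conditions at the vertices. With this operator in hand, the paper defines the same tile-wise identity map on $\Ystiff$ as you do, extends once at unit scale, rescales by $\eps$, and identifies the weak limit of the (now periodic) gradients via Riemann--Lebesgue together with a Gauss--Green computation on each soft tile.

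Your explicit polar interpolation does by hand what the trace theory delivers abstractly; the calculation $\int_0^{\eps/2}(\eps/r)^p\,r\dd r\sim\eps^2$ for $p<2$ is precisely the capacity estimate underlying Grisvard's result. Your path to the limit (uniform convergence plus a $W^{1,p}$-bound) is also more direct than the paper's Riemann--Lebesgue/Gauss--Green argument. The paper's proof is shorter once the extension lemma is available and reusable elsewhere, whereas yours is self-contained and makes the role of the hypothesis $p<2$ completely transparent at the level of the estimates.

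One technical point worth tightening: the step ``glue the four local corner constructions on $T$ by a smooth partition of unity'' needs a little care, since the partition-of-unity cutoffs on a tile of size $\eps$ have gradients of order $\eps^{-1}$. A clean way around this is to first subtract from the boundary data four localised angular corrections $J_i\,\phi(\theta)\,\chi(r/\eps)$ (one per corner, $J_i=\mathcal{O}(\eps)$ the jump vector), whose extensions carry exactly the $\eps/r$ singularity you analysed, and then extend the now \emph{continuous} piecewise-affine remainder by any standard bounded linear extension; no gluing is then required.
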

 
\begin{proof} Let $\nabla u=F$ with $F\in \R^{2\times 2}$.
The idea is to work here with a classical Sobolev extension result \cite[Lemma 2.5]{ACDP92}, bearing in mind that in contrast to \cite[Theorem~2.1]{CaS11}, the functions we wish to extend are defined on different connected components, which makes a pure estimate of the gradients impossible.
First, we define $v$ on the stiff components via
\begin{align}\label{def_v_p<2}
	v(x) = x - k + Fk \quad\text{ if } x\in k+ Y_1\cup Y_3
\end{align}
for some $k\in \Z^2$, see e.g., Figure \ref{fig:microcrack}.
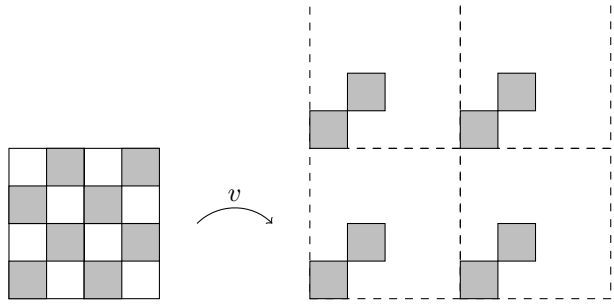
\begin{figure}
	\centering
	\begin{tikzpicture}[scale=.5]
		\foreach \x in {0,2}{
			\foreach \y in {0,2}{
				\draw (\x,\y) rectangle ($(2,2)+(\x,\y)$);
				\draw [fill=black!25!white]($(0,0)+(\x,\y)$) rectangle ($(1,1)+(\x,\y)$);
				\draw [fill=black!25!white]($(1,1)+(\x,\y)$) rectangle ($(2,2)+(\x,\y)$);
		}}
		\draw[->] (5,2) to[out=45,in=135] (7,2);
		\draw (6,2.75) node {$v$};
		\begin{scope}[shift={(8,0)}]
			\foreach \x in {0,2}{
			\foreach \y in {0,2}{
				\begin{scope}[shift={(\x,\y)}]
					\draw [dashed](\x,\y) rectangle ($(4,4)+(\x,\y)$);
					\draw [fill=black!25!white]($(0,0)+(\x,\y)$) rectangle ($(1,1)+(\x,\y)$);
					\draw [fill=black!25!white]($(1,1)+(\x,\y)$) rectangle ($(2,2)+(\x,\y)$);
				\end{scope}
		}}		
		\end{scope}
	\end{tikzpicture}
	\caption{An illustration of the microcracks induced by the deformation $v$ as in \eqref{def_v_p<2} for $F=2\Id$ and $\lambda=\frac{1}{2}$. The stiff components (and their image) are colored in grey.}\label{fig:microcrack}
\end{figure}

Let $\tilde{\Omega}\Supset\Omega$ be an open set covering $\Omega$ and let $L: W^{1,p}(\tilde{\Omega}\cap\eYstiff;\R^2)\to W^{1,p}(\Omega;\R^2)$ be the operator from Lemma \ref{lem:extension2} for $U=\tilde{\Omega}$ and $U'=\Omega$.
For $\eps>0$, let us then define the Sobolev function 
\begin{align*}
	u_\eps(x)=\eps L(v)(\tfrac{x}{\eps}) +\dashint_{\Omega} u(y) - \eps L(v)(\tfrac{y}{\eps})\dd{y},\quad x\in \Omega,
\end{align*}
which satisfies $\nabla u_\eps = \nabla \big(L(v)\big)(\frac{\cdot}{\eps}) = \Id\in\SO(2)$ a.e.~in $\Omega\cap\eYstiff$ by design. 
Moreover, Riemann-Lebesgue's Lemma yields that
\begin{align}\label{536}
	\nabla u_\eps\weakly|\Ystiff|\Id + \int_{Y_4}\nabla v \dd{x} + \int_{Y_2} \nabla v \dd{x}\quad \text{in $L^p(\Omega;\R^{2\times 2})$},
\end{align}
where the last two integrals can be calculated using Gau{\ss}-Green's theorem,
\begin{align}\label{Gaus_green_calc}
	\int_{Y_4}\nabla v\dd{x} &+ \int_{Y_2} \nabla v \dd{x} = \int_{\partial Y_4} v \otimes \nu \dd{x} + \int_{\partial Y_2} v \otimes \nu \dd{x}\nonumber\\
	& = 2\lambda(1-\lambda)\Id + \big(\lambda (Fe_1 - e_1)  | (1-\lambda) (Fe_2-e_2)\big) + \big((1-\lambda)(Fe_1-e_1)|\lambda (Fe_2-e_2)\big)\nonumber\\ 
	& = (|\Ysoft|-1)\Id + F,
\end{align}
where $\nu$ denotes the outer unit normal, cf.~also Figure \ref{fig:trace_p<2} for the boundary values of $v$ in the sense of traces.
Hence, the weak limit in~\eqref{536} is $F$. 
With Poincar\'e's inequality in mind, we then finally conclude that $u_\eps\weakly u$ in $W^{1,p}(\Omega;\R^2)$, as desired.
\begin{figure}
	\centering
	\begin{tikzpicture}[scale=1]
		\foreach \x in {0,1,2}{
			\draw [fill=black!25!white]($(0,0)+(\x,-\x)$) rectangle ($(1,1)+(\x,-\x)$);
			\draw [fill=black!25!white]($(1,1)+(\x,-\x)$) rectangle ($(2,2)+(\x,-\x)$);
		}
		\draw (1.5,.5) node {$Y_2$};
		\draw (2.5,-.5) node {$Y_4$};
		\draw [<-] (.5,.5) -- (-.5,.5) node [anchor=east]{$v(x)=x+e_1-Fe_1$};
		\draw [<-] (1.5,-.5) -- (.5,-.5) node [anchor=east]{$v(x)=x$};
		\draw [<-] (2.5,-1.5) -- (1.5,-1.5) node [anchor=east]{$v(x)=x+e_2-Fe_2$};
		
		\draw [<-] (1.5,1.5) -- (2.5,1.5) node [anchor=west]{$v(x)=x-e_2+Fe_2$};
		\draw [<-] (2.5,.5) -- (3.5,.5) node [anchor=west]{$v(x)=x$};
		\draw [<-] (3.5,-.5) -- (4.5,-.5) node [anchor=west]{$v(x)=x-e_1+Fe_1$};
	\end{tikzpicture}
	\caption{An illustration of the $Y$-periodic deformation $v$ as in \eqref{def_v_p<2}. The values of $v$ of the stiff components (colored in gray) can be used to calculate the line integral in \eqref{Gaus_green_calc} in the sense of traces.}\label{fig:trace_p<2}
\end{figure}
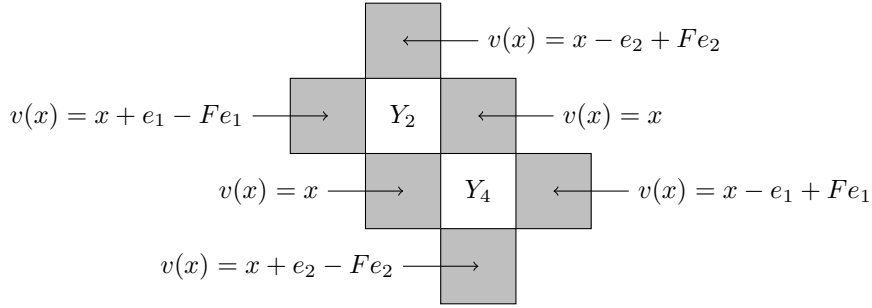
\end{proof}

This next lemma is needed to prove Proposition \ref{prop:anythingcanhappen} and derive suitable energy estimates later in Section \ref{sec:elastic_hom}.

\begin{lemma}[Extension result for checkerboard structures]\label{lem:extension2}
	Let $U'\Subset U\subset \R^2$ be bounded open sets and $\eps>0$ sufficiently small.
	
	a) If $p>2$, then there is a linear operator $L: W^{1,p}(U\cap\eYstiff;\R^2)\cap C^0(\overline{U\cap\eYstiff};\R^2) \to W^{1,p}(U';\R^2)$ such that
	$L u = u$ a.e.~in $U'\cap \eYstiff$ and
	\begin{align*}
		\norm{L u}_{W^{1,p}(U';\R^2)} \leq C\norm{u}_{W^{1,p}(U\cap\eYstiff;\R^2)}
	\end{align*}
	for a constant $C>0$ independent of $\eps,U',U$, and for every $u\in W^{1,p}(U\cap\eYstiff;\R^2)\cap C^0(\overline{U\cap\eYstiff};\R^2)$.
	
	b) If $p\in(1,2)$, then the operator $L$ in a) is defined on all of $W^{1,p}(U\cap\eYstiff;\R^2)$.
\end{lemma}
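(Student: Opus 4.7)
The plan is to construct $L$ in two stages: first a reflection-based extension from the stiff squares into the soft rectangles, defined away from the joints, followed by an extension across small disks centered at the joint points based on \cite[Theorem~2.1]{ACDP92}. A preliminary rescaling $x\mapsto x/\eps$ reduces matters to the unit-cell scale: if $L_1$ is such an operator for $\eps=1$ whose norm depends only on the geometry of $\Ystiff$, then I set $Lu(x):=L_1\bigl(u(\eps\cdot)\bigr)(x/\eps)$; the standard scaling of Sobolev norms, together with the boundedness of $U'$, yields the required $\eps$-uniform estimate since the resulting prefactors involve only nonnegative powers of $\eps$.

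For the reflection step, let $Z\subset \R^2$ denote the discrete set of joints of $\Ystiff$, and fix $\rho>0$ small enough that the balls $B(z,\rho)$ for $z\in Z$ are pairwise disjoint and each meets only the two stiff squares touching at $z$. For every soft tile $T$ and every edge $\Gamma\subset \partial T$ shared with a stiff square, Euclidean reflection across $\Gamma$ carries $u$ on that stiff square to a function on a rectangular region $T_\Gamma\subset T$ adjacent to $\Gamma$. The sets $\{T_\Gamma\setminus \bigcup_z B(z,\rho)\}_\Gamma$ cover $T\setminus \bigcup_z B(z,\rho)$, so a smooth partition of unity subordinate to this cover lets me assemble the individual reflections into a composite $\tilde u$ defined on $D_\rho:=U\setminus \bigcup_{z\in Z} B(z,\rho)$, with
\begin{align*}
\|\tilde u\|_{W^{1,p}(D_\rho;\R^2)} \leq C(\rho,\lambda)\,\|u\|_{W^{1,p}(U\cap \Ystiff;\R^2)}
\end{align*}
by standard chain-rule and partition-of-unity bounds.

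For the completion step, $D_\rho$ is a periodically perforated Lipschitz domain, and \cite[Theorem~2.1]{ACDP92}, applied at the unit-cell scale, provides a bounded linear extension $E:W^{1,p}(D_\rho;\R^2)\to W^{1,p}(U';\R^2)$ whose operator norm depends only on $\rho$ and $\lambda$. Setting $L_1 u := E\tilde u$ then gives $L_1 u = u$ a.e.\ on $U'\cap \Ystiff$, and the estimates from the two steps combine to give the claimed bound. The distinction between (a) and (b) lies in the treatment of the joints: for $p>2$, continuity of $u$ is invoked so that the reflections from the two stiff squares meeting at a joint produce a composite without a pointwise jump at the common endpoint, and thus $\tilde u$ is bona fide Sobolev on a full neighborhood of $Z$ before the ACDP correction; for $p<2$, each joint point has zero $p$-capacity and the partition-of-unity combination of the reflections lies in $W^{1,p}$ regardless of boundary values at the joints, so no continuity assumption is required.

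The main obstacle I anticipate is establishing that the norm of the ACDP extension is independent of $\eps$. I would address this by carrying out the whole construction on the unit cell, exploiting the periodicity of the perforation $D_\rho$ so that the extension operator from \cite{ACDP92} is obtained once and for all at scale $1$ and then transferred to the $\eps$-scale via the rescaling of Step~1; the reflection step contributes an $\eps$-independent constant automatically.
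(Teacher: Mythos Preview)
Your construction has a genuine gap: it does not yield $Lu=u$ on all of $U'\cap\eYstiff$, only on $U'\cap\eYstiff\setminus \bigcup_{z\in Z}B(z,\rho)$. The ACDP extension $E$ from \cite[Theorem~2.1]{ACDP92} preserves values on the perforated domain $D_\rho$, but not inside the removed balls. Since $D_\rho=U\setminus\bigcup_z B(z,\rho)$ excludes the parts of $\Ystiff$ near the joints, the final $L_1u=E\tilde u$ need not agree with $u$ on $\Ystiff\cap\bigcup_z B(z,\rho)$. What you have actually constructed is precisely the \emph{approximate} extension operator of Lemma~\ref{lem:extension} (where one only gets $L_ru=u$ on $U'\cap\eYstiff\setminus\eps B_r$), not the exact extension of Lemma~\ref{lem:extension2}.

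The attempted fix via continuity at the joints does not close this gap. Even if the two reflected pieces agree at the corner point $z$, the partition-of-unity blend $\psi_1 u_1+\psi_3 u_3$ does not have the correct trace on $\partial Y_2$ near $z$: for the glued function to be Sobolev across the bottom edge you need $\psi_1\equiv 1$ there, and across the right edge $\psi_3\equiv 1$, which is incompatible with $\psi_1+\psi_3=1$ in a neighbourhood of the corner. This is exactly why you removed balls in the first place, and you cannot then recover the values of $u$ on the stiff parts inside those balls via ACDP. The alternative of keeping $u$ on all of $\Ystiff$ and applying ACDP only to fill $\Ysoft\cap\bigcup_z B(z,\rho)$ fails because the resulting perforated domain has a non-Lipschitz bowtie singularity at each joint.

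The paper avoids this by a different mechanism: it works directly with the trace space of the soft polygon $Y_2$. For $p>2$ the trace space is the subspace of $\prod_i W^{1-1/p,p}(\Gamma_i;\R^2)$ with matching corner values (Grisvard \cite[Theorem~1.5.2.3]{Gr85}), and continuity of $u$ on $\overline{\Ystiff}$ guarantees exactly this compatibility. A bounded right inverse of the trace operator then produces an extension into $Y_2$ whose trace equals that of $u$ on all of $\partial Y_2$, so the glued function is genuinely Sobolev and agrees with $u$ everywhere on $\Ystiff$. For $p<2$ the corner compatibility condition disappears from the trace space, which is why no continuity hypothesis is needed. Your reflection-plus-ACDP strategy is the right one for Lemma~\ref{lem:extension}, but for the present lemma you need the trace-theoretic route.
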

\begin{proof}
	We first cover the continuous case $p>2$.

	\textit{Step 1: A preliminary construction on the first unit cell.}
	We set $Z$ to be the union of $Y$ and its eight neighbors, i.e.,
	\begin{align}\label{cellZ}
		Z :=\bigcup_{e\in I} (e+Y)\quad\text{ with } I=\{0,\pm e_1, \pm e_2, (\pm 1, \pm 1), (\pm 1, \mp 1)\};
	\end{align}
	we analogously set $\Zstiff$ as the union of $\Ystiff$ and the stiff components all its eight neighboring cells.
	Moreover, consider the space
	\begin{align}\label{trace_space}
		B = \Big\{(g_i)_i\in \prod_{i=1}^4W^{1-\tfrac{1}{p},p}(\Gamma_i;\R^2) : g_{i-1}(x_{i})=g_{i}(x_i) \text{ for all } i\in\{1,\ldots,4\}\Big\},
	\end{align}
	where $g_0 = g_4$, and $\Gamma_1,\ldots,\Gamma_4\subset \partial Y_2$ are the four straight boundary pieces of the polygon $Y_2$ and $x_i\in\partial Y_2$ are the four vertices of $Y_2$, all numbered clockwise, starting in the lower left corner.
	The space $B$ is exactly the trace space of $Y_2$ as can be seen in \cite[Theorem 1.5.2.3 b)]{Gr85}.
	Let $T: W^{1,p}(Y_2;\R^2)\to B$ be the trace operator on the domain $Y_2$ and let
	\begin{align*}
		T_1&: W^{1,p}(-e_1 + Y_3;\R^2)\to W^{1-\tfrac{1}{p},p}(\Gamma_1;\R^2),\quad T_2: W^{1,p}(e_2 + Y_1;\R^2) \to W^{1-\tfrac{1}{p},p}(\Gamma_2;\R^2)\\
		T_3&: W^{1,p}(Y_3;\R^2)\to W^{1-\tfrac{1}{p},p}(\Gamma_3;\R^2),\quad T_4: W^{1,p}(Y_1;\R^2) \to W^{1-\tfrac{1}{p},p}(\Gamma_4;\R^2)
	\end{align*}
	be the projections of the trace operators of the neighboring stiff components onto $\Gamma_1,\ldots,\Gamma_4$.
	In light of \cite[Theorem 4.2]{LaP20}, there exists a linear and continuous right inverse $S$ of $T$.
	By composing $S$ with $(T_1,\ldots,T_4)$ and arguing similarly on $Y_4$, we find a linear and continuous operator $L^{(1)}: W^{1,p}(\Zstiff;\R^2)\cap C^0(\overline{\Zstiff};\R^2)\to W^{1,p}(Y;\R^2)$ such that $L^{(1)} u = u$ a.e.~in $\Ystiff$ and
	\begin{align*}
		\norm{L^{(1)} u}_{W^{1,p}(Y;\R^2)} &\leq C(\lambda,p)\norm{u}_{W^{1,p}(\Zstiff;\R^2)}
	\end{align*}
	for every $u\in W^{1,p}(U\cap\eYstiff;\R^2)$.
	
	\medskip
	
	\textit{Step 2: Extension on large domains.} Now, let $V'\Subset V\subset \R^2$ and $\eps$ sufficiently small.
	Then, there exists an operator $L^{(2)}: W^{1,p}(V\cap \Ystiff;\R^2)\cap C^0(\overline{V\cap\Ystiff};\R^2)\to W^{1,p}(V';\R^2)$ such that $L^{(2)} u = u$ a.e.~in $V'\cap \Ystiff$ and
	\begin{align}\label{est_big}
		\begin{split}
			\norm{L^{(2)} u}_{W^{1,p}(V';\R^2)} &\leq C\norm{u}_{W^{1,p}(V\cap\Ystiff;\R^2)},
		\end{split}
	\end{align}
	where $\Ystiff$ now denotes the $Y$-periodic extension in this step.
	Indeed, with $J'=\{k\in\Z: V'\cap(k+Y) \neq \emptyset\}$ we obtain
	\begin{align}\label{coverYZ}
		V' \subset \bigcup_{k\in J'}k+Y\subset \bigcup_{k\in J'}\eps(k+Z) \subset V,
	\end{align}
	which then allows us to work cell-wise. 
	With $\pi^\xi(x):= x+\xi$ for $x,\xi\in\R^2$, we find for fixed $k\in J$ and $u\in W^{1,p}(V\cap \Ystiff)\cap C^0(\overline{V\cap\Ystiff};\R^2)$ the function
	\begin{align*}
		u_k:= L^{(1)}(u\restrict{k+\Zstiff} \circ \pi^{k})\circ \pi^{-k} \in W^{1,p}(k+(\Zstiff\cup Y);\R^2)
	\end{align*}
	with $L^{(1)}$ as in Step 2.
	Since $u_k=u$ on $k+\Zstiff$, we obtain that
	\begin{align*}
		L^{(2)}: W^{1,p}(V\cap \Ystiff;\R^2)\cap C^0(\overline{V\cap\Ystiff};\R^2)\to W^{1,p}(V';\R^2), (L^{(2)}u)(x)=u_k(x)\text{ if }x\in k + Y
	\end{align*}
	is well-defined and satisfies $L^{(2)}u=u$ on $V'\cap\Ystiff$. On each $k+Y$, it holds that
	\begin{align*}
		\norm{u_k}_{W^{1,p}(k+Y;\R^2)} &= \norm{L_0(u\restrict{k+\Zstiff}\circ \pi^k)\circ \pi^{-k}}_{W^{1,p}(k+Y;\R^2)} = \norm{L_0(u\restrict{k+\Zstiff}\circ \pi^k)}_{W^{1,p}(Y;\R^2)}\\
		&\leq C(\lambda,p)\norm{u\restrict{k+\Zstiff}\circ \pi^k}_{W^{1,p}(\Zstiff;\R^2)} = C(\lambda,p)\norm{u\restrict{k+\Zstiff}}_{W^{1,p}(k+\Zstiff;\R^2)}.
	\end{align*}
	Summing this estimate over all $k\in J'$ and exploiting \eqref{coverYZ} then yields \eqref{est_big}.

	\medskip

	\textit{Step 3: Scaling analysis.} The desired extension operator follows immediately from a scaling analysis as in the first step of the proof of \cite[Theorem 2.1]{ACDP92}.	
	
	\medskip
	
	\textit{Step 4:} To obtain the desired result for $p\in(1,2)$, we merely need to add the fact that the trace space $B$ corresponding to $Y_2$ as in \eqref{trace_space} is now simply
	\begin{align*}
		B = \prod_{i=1}^4W^{1-\tfrac{1}{p},p}(\Gamma_i;\R^2)
	\end{align*}
	in light of \cite[Theorem 1.5.2.3 a)]{Gr85}; one works analogously on $Y_4$.
	Omitting the intersection with a suitable space of continuous functions, the rest of the proof can be handled exactly as in the three steps before.
\end{proof}

\begin{remark}[Porous checkerboard structures]
So far, we have dealt with checkerboard structures composed of elastically stiff squares $\eYstiff$ and soft rectangles $\eYsoft$, so that the entire reference configuration $\Omega$ consists of an elastic material. While this model is relevant, for example, in the production of waterproof or airtight auxetic materials, the porous counterpart, where $\eYsoft$ is replaced by void, is also of significance.
	
	To model this scenario, we choose a bounded Lipschitz domain $\Omega'\Subset\Omega$ and work with energies defined on the set $\Acal_\eps$ of all functions $u\in W^{1,p}(\Omega\cap\eYstiff;\R^2)\cap C^0(\overline{\Omega\cap\eYstiff};\R^2)$ with $\int_{\Omega'\cap \eYstiff} u \dd x =0$ and $\norm{u}_{L^p(\Omega\cap\eYstiff;\R^2)}\leq M\norm{u}_{L^p(\Omega'\cap\eYstiff;\R^2)}$ for a fixed constant $M>0$; the latter condition serves to avoid concentration effects near the boundary of $\Omega$.
	Precisely, the energies are defined as
	\begin{align*}
		\Ical_\eps : \Acal_\eps\to [0,\infty],\ u\mapsto \int_{\Omega\cap\eYstiff} \Wrig(\nabla u) \dd x,
	\end{align*}
	with $\Wrig$ as in \eqref{Wrig2} and $p>2$.
	Since $\Ical_\eps$ is defined on $\eps$-dependent spaces, it is necessary to explain the underlying topology of a corresponding $\Gamma$-convergence (and compactness) result.
	In light of Lemma \ref{lem:extension2}, every $u\in\Acal$ can be extended to a function $Lu$ in $W^{1,p}(\Omega';\R^2)$ with estimates of the $W^{1,p}$-norms, which allows us to use the weak topology in $W^{1,p}(\Omega';\R^2)$ for the $\Gamma$-convergence of $(\Ical_\eps)_\eps$.
	In particular, we say that a sequence $(u_\eps)_\eps$ with $u_\eps\in \Acal_\eps$ converges to $u\in W^{1,p}(\Omega';\R^2)$ in $W^{1,p}(\Omega';\R^2)$ if the sequence $(Lu_\eps)_\eps\subset W^{1,p}(\Omega';\R^2)$ does so. 
	
	With this notion of convergence, it is straightforward to show that $(\Ical_\eps)_\eps$ $\Gamma$-converges to the constant zero function defined on the set of all affine deformations with vanishing mean value and gradient in $K$, cf.~\eqref{Klambda}, considering that compactness follows in view of the continuity of $L$ the Poincar\'e's inequality as in Lemma \ref{lem:poincare2} below.
\end{remark}

\begin{remark}[Checkerboard structures with rigid rectangles]\label{rem:result_rectangle}
By an analogous argumentation as in the proofs of Proposition \ref{prop:K_lambda}, periodic high-contrast geometries with stiff parts consisting of rectangles can be handled as well. In this situation, we set 
$$Y_1=(0,\lambda] \times (0, \mu],\quad Y_2=(0, \lambda]\times (\mu, 1],\quad Y_3 = (\lambda,1] \times (\mu,1],\quad Y_4=(\lambda, 1]\times (0, \mu]$$
for given $\lambda,\mu \in (0,1)$. Instead of the weak limit \eqref{weaklimit_w}, we now obtain
\begin{align*}
	\nabla w_\eps\weakly \int_Y \nabla w \dd{x} &= \lambda \mu S +(1-\lambda)(1-\mu)R + \lambda (1-\mu)(Se_1|Re_2) + \mu (1-\lambda) (Re_1|Se_2)\\ 
	&= ((\lambda S + (1-\lambda)R)e_1|(\mu S + (1-\mu)R)e_2)  \quad\text{in }L^p(\Omega;\R^{2\times 2}).
\end{align*}
This yields that admissible limit deformations are affine with gradient in
\begin{align*}
	K=\{((\lambda S + (1-\lambda)R)e_1|(\mu S + (1-\mu)R)e_2): R,S\in \SO(2), Re_1\cdot Se_1 \geq 0\}.
\end{align*}
For every $R,S\in \SO(2)$ with $Re_1\cdot Se_1 \geq 0$, we have
\begin{align*}
		\det\big(((\lambda S + (1-\lambda)R)e_1|(\mu S + (1-\mu)R)e_2)\big) 
&= \lambda \mu +(1-\lambda)(1-\mu) + (\lambda(1-\mu)+(1-\lambda)\mu)Se_1\cdot Re_1	\\
&= |\Ystiff| + |\Ysoft|Se_1 \cdot Re_1 \geq |\Ystiff|,
	\end{align*}	 
as well as $$|(\lambda S + (1-\lambda)R)e_1| \leq 1, \quad|(\mu S + (1-\mu)R)e_2| \leq 1,$$ but 
$$(\lambda S + (1-\lambda)R)e_1 \cdot (\mu S + (1-\mu)R)e_2 = (\mu- \lambda)
Re_1\cdot Se_2.$$ Hence, $F \in K$ is not necessarily a conformal contraction any more but the Poisson's ratio corresponding to $F$ is still negative.  
\end{remark}

\section{Analysis of the model with stiff tiles}\label{sec:elastic}
\subsection{Technical tools}\label{sec:elastic_aux}
We begin the analysis of the model with diverging elastic energy by establishing a replacement for the local results Lemma \ref{lem:affine} and Corollary \ref{cor:preservation}.
In contrast to Section \ref{sec:rigid_aux}, where we merely needed to consider the boundary values at a single soft rectangle, our analysis now requires the four neighboring rigid squares as well.
In this section, we consider for $\mu\in (0,1]$ the following cross-like structure
\begin{align}\label{cross-structure}
	 \begin{split}
	 	E &= \bigcup_{i=0}^4 E_i,\quad E' = E\setminus E_0 \text{ with } \\
	 	E_0 &= (0,1]\times (0,\mu],\ E_1 = (0,1]\times (-1,0],\ E_2 = (-\mu,0]\times (0,\mu],\\
	 	E_3 &= E_1 + (1+\mu)e_2,\ E_4=E_2 + (1+\mu)e_1,
	 \end{split}
\end{align}
see also Figure \ref{fig:cross}.
\begin{figure}[h!]
	\centering
	\begin{tikzpicture}
		\draw (0,0) rectangle (1.5,1);
		\draw (-1,0) rectangle (0,1);
		\draw (1.5,0) rectangle (2.5,1);
		\draw (0,-1.5) rectangle (1.5,0);
		\draw (0,1) rectangle (1.5,2.5);
		\draw (.75,0.5) node {$E_0$};
		\draw (-0.5,0.5) node {$E_2$};
		\draw (2,0.5) node {$E_4$};
		\draw (.75,-.75) node {$E_1$};
		\draw (.75,1.75) node {$E_3$};	
		\draw (2.5,2) node {$E$};
		
		\draw [<->] (-1.2,0) --++ (0,0.5) node [anchor=east] {$\mu$} --++ (0,0.5);
		\draw [<->] (0,-1.7) --++ (.75,0) node [anchor=north] {$1$} --++ (.75,0);
		
		\fill (0,0) circle (1.5pt);
		\draw (0,0) node [anchor = north east] {$\small x_1$};
		\fill (0,1) circle (1.5pt);
		\draw (0,1) node [anchor = south east] {$\small x_2$};
		\fill (1.5,1) circle (1.5pt);
		\draw (1.5,1) node [anchor = south west] {$\small x_3$};
		\fill (1.5,0) circle (1.5pt);
		\draw (1.5,0) node [anchor = north west] {$\small x_4$};
	\end{tikzpicture}
	\caption{An illustration of the cross structure defined in \eqref{cross-structure}}\label{fig:cross}
\end{figure}
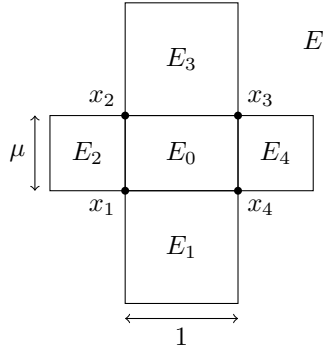 

We begin with a brief lemma about transferring the Ciarlet-Ne\v{c}as condition from one function to one that is sufficiently close with respect to the $W^{1,p}$-norm.

\begin{lemma}[Approximate Ciarlet-Ne\v{c}as condition]\label{lem:approx_ciarlet}
Let $p>2$, $M\subset \R^2$ be the union of finitely many bounded Lipschitz-domains. If $u\in W^{1,p}(M;\R^2)$ satisfies the Ciarlet-Ne\v{c}as condition \eqref{ciarlet_necas} for $\Omega=M$
and there is $v\in W^{1,p}(M;\R^2)$ with
\begin{align}\label{u-v_Sobolev_estimate}
	\norm{u-v}_{W^{1,p}(M;\R^2)}\leq h
\end{align}
for some  $h\in(0,1)$ sufficiently small, then there exists a constant $C=C(M,p)>0$ such that
\begin{align*}
	\int_{M}|\det \nabla v| \dd x \leq |v(M)| + C\big(1+\norm{\nabla v}_{L^2(M;\R^{2\times 2})}\big)h.
\end{align*}
\end{lemma}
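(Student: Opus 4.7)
The plan is to transfer the Ciarlet-Ne\v{c}as condition from $u$ to $v$ by exploiting the $W^{1,p}$-closeness. Since $p>2$ and $M$ is a finite union of bounded Lipschitz domains, Morrey's embedding yields $\|u-v\|_{L^\infty(M)}\leq Ch$ with $C=C(M,p)$; this uniform closeness, combined with a pointwise Jacobian comparison and a tubular-neighborhood estimate for the image $v(\partial M)$, should deliver the claim. I would structure the argument in two main ingredients: (i) comparing the integrals $\int_M|\det\nabla u|\,dx$ and $\int_M|\det\nabla v|\,dx$ via a Jacobian identity, and (ii) comparing $|u(M)|$ and $|v(M)|$ via an image-volume estimate.

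For (i), I would start from the elementary inequality $\bigl||\det A|-|\det B|\bigr|\leq C(|A|+|B|)|A-B|$ valid for all $A,B\in\R^{2\times 2}$. Integrating over $M$ and applying H\"older's inequality together with the embedding $L^p(M)\hookrightarrow L^2(M)$ (valid for $p\geq 2$ on a bounded set) gives
\begin{align*}
\Bigl|\int_M|\det\nabla v|\,dx-\int_M|\det\nabla u|\,dx\Bigr|\leq C\bigl(\|\nabla u\|_{L^2}+\|\nabla v\|_{L^2}\bigr)\|\nabla u-\nabla v\|_{L^2}\leq C\bigl(1+\|\nabla v\|_{L^2}\bigr)h,
\end{align*}
where I invoked $\|\nabla u\|_{L^2}\leq\|\nabla v\|_{L^2}+Ch$ and $\|\nabla u-\nabla v\|_{L^2}\leq Ch$. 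Combining this with the Ciarlet-Ne\v{c}as hypothesis $\int_M|\det\nabla u|\,dx\leq|u(M)|$ yields
\begin{align*}
\int_M|\det\nabla v|\,dx\leq|u(M)|+C\bigl(1+\|\nabla v\|_{L^2}\bigr)h.
\end{align*}

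The remaining task, which I expect to be the main obstacle, is the volume comparison $|u(M)|\leq|v(M)|+C(1+\|\nabla v\|_{L^2})h$. From $\|u-v\|_{L^\infty}\leq Ch$ I deduce $u(M)\subseteq v(M)+\overline{B(0,Ch)}$, so
\begin{align*}
|u(M)\setminus v(M)|\leq\bigl|(v(M)+B(0,Ch))\setminus v(M)\bigr|\leq\bigl|\{y\in\R^2:\dist(y,\partial v(M))\leq Ch\}\bigr|,
\end{align*}
and it suffices to bound this tubular neighborhood. Since $v$ is continuous on $\overline M$, one has $\partial v(M)\subseteq v(\partial M)$ up to a set of critical values (negligible by a Sard-type result for Sobolev maps with $p>2$). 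The rectifiability of $\partial M$, inherited from the Lipschitz character of $M$, combined with an area-formula argument on a thin strip of $M$ adjacent to $\partial M$, should allow one to bound the Minkowski $1$-content of $v(\partial M)$ by $C\|\nabla v\|_{L^2(M)}$, completing the estimate. Making this tubular step fully rigorous despite the low regularity of $\partial v(M)$ is the point I anticipate requiring the most delicate work.
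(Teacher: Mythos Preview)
Your two–step structure (determinant comparison plus image–volume comparison) is exactly the paper's strategy, and your step (i) is carried out in the same way. The difference lies in step (ii).

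The paper is in fact \emph{less} detailed than you here: after the Morrey embedding it simply records $u(M_i)\subset v(M_i)+\overline{B(0,C_1h)}$ and asserts that taking the union gives $|u(M)|\leq |v(M)|+C_2h$ with $C_2=C_2(M,p)$, without estimating the tube. Your instinct that this is the delicate point is correct. For a general $v\in W^{1,p}(M;\R^2)$ with $p>2$ one only has H\"older continuity with exponent $1-2/p$, so $v(\partial M)$ need not be $1$-rectifiable (its Hausdorff dimension can be as large as $p/(p-2)$), and there is no reason for the Minkowski $1$-content of $v(\partial M)$, or for $|v(M)+B(0,h)|-|v(M)|$, to be $O(h)$ with a constant depending only on $(M,p)$. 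In particular, your proposed bound of the Minkowski content by $C\|\nabla v\|_{L^2}$ via an area formula on a boundary strip cannot be made to work at this regularity: the tangential derivative of $v$ along $\partial M$ is not controlled in $L^1(\partial M)$ by $\|\nabla v\|_{L^2(M)}$.

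This gap is harmless for the paper because the lemma is only ever invoked (in Lemma~\ref{lem:luftschloss}) with $v=\bar v$ a piecewise rigid motion, hence globally Lipschitz with constant $\sqrt{2}$. In that case $\mathcal H^1\bigl(v(\partial M)\bigr)\leq \sqrt{2}\,\mathcal H^1(\partial M)=C(M)$ and the tube estimate $|v(M)+B(0,h)|\leq |v(M)|+C(M)\,h+\pi h^2$ is elementary. For a clean write-up you may either add the hypothesis that $v$ is Lipschitz with a fixed constant, or replace the asserted bound by $|u(M)|\leq |v(M)|+C\bigl(\mathcal H^1(v(\partial M))+h\bigr)h$; either version suffices for the application and closes step~(ii) rigorously.
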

\begin{proof}
	Let $M_1,\ldots, M_n\subset \R^2$ for $n\in\N$ be the finitely many bounded Lipschitz domains that comprise $M$, i.e., $M= \bigcup_{i=1}^nM_i$.
	In light of the Sobolev embeddings applied to each $M_i$, the bound \eqref{u-v_Sobolev_estimate} is (up to a constant $C_1=C_1(M,p)>0$) also uniform on $M_i$. 
	We therefore obtain
	\begin{align*}
		u(M_i) \subset v(M_i) + \overline{B(0,C_1h)},
	\end{align*}
	which, after taking the union $i=1,\ldots, n$, leads to the estimate
	\begin{align}\label{ciarlet_upper}
		|u(M)| &\leq |v(M)| + C_2h
	\end{align}
	for a constant $C_2=C_2(M,p)>0$.
	On the other hand, the estimate \eqref{u-v_Sobolev_estimate} yields that
	\begin{align}\label{ciarlet_lower}
		\int_{M} &|\det \nabla u - \det \nabla v| \dd x = \int_{M} |(\partial_1 u)^\perp\cdot \partial_2 u - (\partial_1 v)^\perp\cdot \partial_2 v| \dd x\nonumber\\
		&= \int_{M} |(\partial_1 u)^\perp(\partial_2 u - \partial_2 v) + (\partial_1 u - \partial_1 v)^\perp\cdot\partial_2 v| \dd x\nonumber\\
		&\leq \int_{M} |\partial_1 u||\partial_2 u - \partial_2 v| + |\partial_1 u - \partial_1 v||\partial_2 v| \dd x\nonumber\\
		&\leq (\norm{\partial_1 u}_{L^2(M;\R^2)} + \norm{\partial_2 v}_{L^2(M;\R^2)})\norm{u-v}_{W^{1,2}(M;\R^2)}\nonumber\\
		&\leq (\norm{u-v}_{W^{1,2}(M;\R^2)}+2\norm{\nabla v}_{L^2(M;\R^{2\times 2})})\norm{u-v}_{W^{1,2}(M;\R^2)}\nonumber\\
		&\leq (C_1h+2\norm{\nabla v}_{L^2(M;\R^{2\times 2})})C_1h \leq 2C_1\big(1+\norm{\nabla v}_{L^2(M;\R^{2\times 2})}\big)h
	\end{align}
	if $C_1h<2$.
	Now, we combine \eqref{ciarlet_necas} with the estimates \eqref{ciarlet_lower} and \eqref{ciarlet_upper} to conclude that
	\begin{align*}
		\int_M |\det \nabla v| \dd x \leq |v(M)| + (2C_1+C_2)\big(1+\norm{\nabla v}_{L^2(M;\R^{2\times 2})}\big)h.
	\end{align*}
\end{proof}

The next lemma, which is substantial for characterizing the set of admissible limit deformations, is a quantitative rigidity estimate in the spirit of \cite{FJM02} for cross structures $E'$ as in \eqref{cross-structure}.
By combining Lemma \ref{lem:approx_ciarlet} with careful geometric arguments, we show that the rotations on opposite squares can be selected identical while controlling the error terms.
This result demonstrates, in particular, that Corollary \ref{cor:preservation} a) holds true if orientation preservation is replaced by non-self-interpenetration of matter.

\begin{lemma}[Quantitative rigidity estimate for cross structures]\label{lem:luftschloss}
Let $p>2$ and $E,E',E_0,\ldots, E_4$ be as in \eqref{cross-structure}.
There is a constant $C=C(p)>0$ and $\delta_0 = \delta_0(p)$ with the following property: For every $u\in W^{1,p}(E;\R^2)$ satisfying the Ciarlet-Ne\v{c}as condition \eqref{ciarlet_necas} on $E'$ and for which $\norm{\dist(\nabla u, \SO(2))}_{L^p(E')}=:\delta < \delta_0$, there exist $R,S\in \SO(2)$ such that
\begin{align}\label{luftschloss_absch}
	\norm{\nabla u - S}_{L^p(E_1\cup E_3;\R^{2\times 2})} + \norm{\nabla u - R}_{L^p(E_2\cup E_4;\R^{2\times 2})} \leq C\delta^\frac{1}{2}
\end{align}
and 
\begin{align}\label{approx_scalar}
	Re_1 \cdot Se_1 \geq - C \delta^\frac{1}{2}.
\end{align}
\end{lemma}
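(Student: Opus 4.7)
The plan is to combine the quantitative rigidity estimate of Friesecke--James--M\"uller, applied individually on each of the four stiff squares, with the approximate non-interpenetration bound of Lemma \ref{lem:approx_ciarlet}. The latter will take over the role that orientation preservation plays in Corollary \ref{cor:preservation}, namely excluding the ``hook''-type configurations from cases $iii)$--$iv)$ of Lemma \ref{lem:affine}, and forcing the rotations on opposite squares to coincide up to controlled error.

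The first step is to apply \cite[Theorem 3.1]{FJM02} to $u$ on each $E_i$ in order to extract rotations $R_i\in\SO(2)$ with $\|\nabla u-R_i\|_{L^p(E_i;\R^{2\times 2})}\leq C\delta$ and, after a Poincar\'e--Wirtinger step, translations $b_i\in\R^2$ such that the affine maps $v_i(x):=R_ix+b_i$ satisfy $\|u-v_i\|_{W^{1,p}(E_i;\R^2)}\leq C\delta$. Since $p>2$, Sobolev embedding upgrades this to $\|u-v_i\|_{C^0(\overline{E_i};\R^2)}\leq C\delta$. Each of the four corners $x_k$ of $E_0$ is shared between two adjacent stiff squares $E_i,E_j$, so continuity of $u$ forces $|v_i(x_k)-v_j(x_k)|\leq C\delta$. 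Summing these four vector identities around the loop $E_1\to E_2\to E_3\to E_4\to E_1$ telescopes the translations and yields the key \emph{loop equation}
\begin{equation*}
    \bigl|(R_1-R_3)e_1+\mu(R_4-R_2)e_2\bigr|\leq C\delta.
\end{equation*}
Setting $v:=v_i$ on $E_i$, one has $\det\nabla v\equiv 1$ on $E'$ and $\|\nabla v\|_{L^2(E')}$ uniformly bounded, so Lemma \ref{lem:approx_ciarlet} applied to $u$ and $v$ on $E'$ with $h=C\delta$ gives $|E'|=\int_{E'}|\det\nabla v|\dd x\leq |v(E')|+C\delta$. Combined with the elementary subadditivity $|v(E')|\leq |E'|-\sum_{i<j}|v(E_i)\cap v(E_j)|$, the total pairwise overlap of the four rigidly deformed squares is controlled by $C\delta$.

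The main obstacle is the final geometric reduction from four rotations to two. Parametrising $R_i=R_{\alpha_i}$ and applying sum-to-product formulas, the loop equation rewrites as
\begin{equation*}
    \sin\tfrac{\alpha_1-\alpha_3}{2}\,R_{(\alpha_1+\alpha_3)/2}e_2 = \mu\sin\tfrac{\alpha_4-\alpha_2}{2}\,R_{(\alpha_2+\alpha_4)/2}e_1 + O(\delta),
\end{equation*}
whose solution set decomposes into a \emph{perfect} branch where both sines vanish (the desired outcome $R_1\approx R_3$ and $R_2\approx R_4$) and a \emph{hook} branch where $\alpha_1+\alpha_3\equiv\alpha_2+\alpha_4\pm\pi$, echoing cases $iii)$--$iv)$ of Lemma \ref{lem:affine}. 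A direct computation, in the spirit of Corollary \ref{cor:preservation}\,a), shows that any finite deviation from the perfect branch towards the hook branch forces a macroscopic overlap of two of the images $v(E_i),v(E_j)$, ruled out by the overlap bound obtained above. Quantitatively, the combination of the loop equation with the overlap estimate is expected to yield a Pythagoras-type bound of the form $|R_1-R_3|^2+|R_2-R_4|^2\leq C\delta$, so that each of the two differences is at most $C\delta^{1/2}$ --- this is the origin of the square root. Setting $S:=R_1$ and $R:=R_2$, the triangle inequality applied in $L^p$ (where the correction $|R_i-S|\,|E_i|^{1/p}$ arising from the FJM step contributes the dominant $\delta^{1/2}$ term) then yields \eqref{luftschloss_absch}, while an analogous wedge-overlap analysis at the common corner $v_1(x_1)\approx v_2(x_1)$ of $v(E_1)$ and $v(E_2)$ delivers the one-sided scalar product condition \eqref{approx_scalar}.
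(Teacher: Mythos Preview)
Your overall strategy matches the paper's: apply the FJM rigidity estimate on each $E_i$, upgrade to $C^0$ via Sobolev embedding, use Lemma~\ref{lem:approx_ciarlet} to bound the overlap of the rigidly deformed squares, and then exclude the ``hook'' configurations geometrically. Your loop equation $|(R_1-R_3)e_1+\mu(R_4-R_2)e_2|\leq C\delta$ is a clean algebraic device that the paper does not use explicitly, and your idea of applying Lemma~\ref{lem:approx_ciarlet} directly to the piecewise affine $v$ (rather than to an adjusted continuous map) is in principle a simplification, since $\|u-v\|_{W^{1,p}(E')}\leq C\delta$ gives an overlap bound of order $\delta$ rather than $\delta^{1/3}$.

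The genuine gap is that you do not carry out the ``main obstacle'' you yourself identify. You assert that the hook branch forces a macroscopic overlap and that combining this with the loop equation ``is expected to yield a Pythagoras-type bound of the form $|R_1-R_3|^2+|R_2-R_4|^2\leq C\delta$'', but you do not show it, and this mechanism is not how the paper obtains the square root. In the paper the exponent $\tfrac12$ comes from a different source: one first constructs an auxiliary \emph{continuous} piecewise rigid map $\bar v$ by projecting the approximate corner images $b,c$ onto exact circles $a+\mathcal S^1$, $d+\mathcal S^1$; the projection error is governed by the chord length of a thin annulus, i.e.\ $|\bar c-c|\lesssim\sqrt{\eta}$ via Pythagoras on the annulus. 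Because this step degenerates when $|b-d|$ is small, the paper first runs the argument with a crude threshold (yielding $\eta^{1/3}$), uses that to conclude $|b-d|\geq l>0$, and then bootstraps to $\eta^{1/2}$. Only after $\bar v$ is in hand does the overlap case analysis (parallelogram versus hook) become clean, since the four images then share corners exactly.

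In your approach the four images $v(E_i)$ do \emph{not} share corners exactly (the mismatch is $O(\delta)$), so the overlap computation is not the simple wedge picture you invoke, and the loop equation alone does not separate the perfect branch from the hook branch when the two unit vectors $R_{(\alpha_1+\alpha_3)/2}e_2$ and $R_{(\alpha_2+\alpha_4)/2}e_1$ are nearly parallel. You would need to supply a quantitative argument covering this near-degenerate regime; the paper's two-pass construction of $\bar v$ is precisely the device that handles it. Without that (or an equivalent replacement), the proposal remains a plausible outline rather than a proof.
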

\begin{proof}
	This proof concentrates on the more delicate scenario $\mu=1$, while the case $\mu\in (0,1)$ shall be discussed at the end.
	We first establish \eqref{luftschloss_absch} and \eqref{approx_scalar} with the right-hand side $C\delta^{\frac{1}{3}}$ and improve the estimate later on.
	
	\medskip
	
	\textit{Step 1: Geometric setup.} Due to the quantitative geometric rigidity estimate by Friesecke, James \& M\"uller \cite[Theorem 3.1]{FJM02} there exist four matrices $S_1,S_3,R_2,R_4\in \SO(2)$ such that
	\begin{align*}
		\norm{\nabla u - S_i}_{L^p(E_i,\R^{2\times 2})}, \norm{\nabla u - R_j}_{L^p(E_j,\R^{2\times 2})} \leq C\norm{\dist(\nabla u, \SO(2))}_{L^p(E')}
	\end{align*}
	for all $i\in\{1,3\}$ and $j\in\{2,4\}$.
	The reversed triangle inequality then yields that
	\begin{align}\label{fjm_inital_estimate}
		\begin{split}
			\norm{\nabla u - S_1}_{L^p(E_1\cup E_3;\R^{2\times 2})} &\leq C(\norm{\dist(\nabla u, \SO(2))}_{L^p(E')} + |S_1 - S_3|),\\
			\norm{\nabla u - R_2}_{L^p(E_2\cup E_4;\R^{2\times 2})} &\leq C(\norm{\dist(\nabla u, \SO(2))}_{L^p(E')} + |R_2 - R_4|).
		\end{split}
	\end{align}
	Our primary task is to obtain an estimate for the quantities $|S_1-S_3|$ and $|R_2-R_4|$ in terms of powers of $\norm{\dist(\nabla u, \SO(2))}_{L^p(E')}$.
	For $i\in\{1,3\}$, $j\in \{2,4\}$ we set $s_i = \int_{E_i} u(x) - S_i x \dd x$, $r_j= \int_{E_j} u(x) - R_j x \dd x$ and introduce the auxiliary functions
	\begin{align*}
		v_k: \overline{E_k} \to \R^2,\ x\mapsto \begin{cases}
										S_i x + s_i &\text{ if } k = i,\\
										R_j x + r_j	&\text{ if } k = j,
									 \end{cases}
									 \quad\text{ for } k\in\{1,\ldots,4\}.
	\end{align*}
	From Poincar\'e's inequality and the Sobolev embeddings, we then obtain for all $k\in\{1,\ldots, 4\}$ the estimates
	\begin{align}\label{uniform_eta}
		\norm{u-v_k}_{C^0(\overline{E_k};\R^2)} \leq C \norm{u-v_k}_{W^{1,p}(E_k;\R^2)} \leq C \norm{\dist(\nabla u, \SO(2)}_{L^p(E')}=:\eta.
	\end{align}
	From this uniform estimate, we infer that $u(\overline{E_k})\subset v_k(\overline{E_k}) + \overline{B(0,\eta)}$; in particular, it holds that
	\begin{align*}
		u(\partial E_0) \subset \bigcup_{k=1}^4 v_k(\partial E_k \cap \partial E_0) + \overline{B(0,\eta)},
	\end{align*}
	see also Figure \ref{fig:boundary}. To shorten the notation, we set
	\begin{align}\label{abcd}
		\begin{split}
			a=v_1(x_1),\ b=v_3(x_2),\ c=v_3(x_3),\ d=v_1(x_4),\\
			a'=v_2(x_1),\ b'=v_2(x_2),\ c'=v_4(x_3),\ d'=v_4(x_4),
		\end{split}
	\end{align}
	and find that \eqref{uniform_eta} and the continuity of $u$ yields that
	\begin{align}\label{a-a'}
		|a-a'|,|b-b'|,|c-c'|,|d-d'|\leq 2\eta.
	\end{align}
	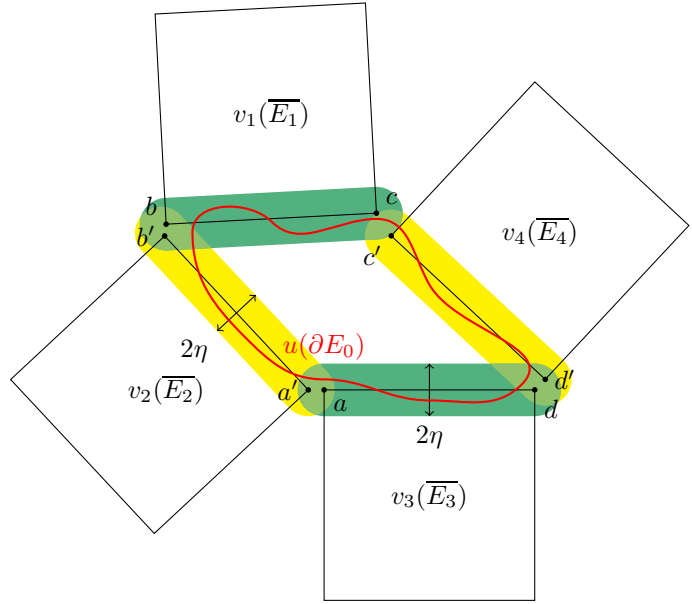
\begin{figure}
		\centering
		\begin{tikzpicture}[scale=.7]
			\fill[yellow] ($(-0.3,0) + (43:.5)$) arc (43:-137:.5) --++(133:4) arc (233:43:.5) -- cycle;
			\fill[yellow] ($(4.2,0.2)+(227:.5)$) arc (-133:47:.5) --++ (137:4) arc (47:227:.5) -- cycle;
			\fill[ForestGreen!70!white] (0,-0.5) arc (270:90:0.5) -- (4,0.5) arc (90:-90:0.5) -- cycle;
			\fill[ForestGreen!70!white] ($(-3,3.15) + (273:.5)$) arc (273:93:.5) --++ (3:4) arc (93:-87:.5) -- cycle;			
			
			\begin{scope}
				\clip ($(-0.3,0) + (43:.5)$) arc (43:-137:.5) --++(133:4) arc (233:43:.5) -- cycle;
				\fill[ForestGreen!60!yellow!80!white] (0,-0.5) arc (270:90:0.5) -- (4,0.5) arc (90:-90:0.5) -- cycle;
			\end{scope}
			\begin{scope}
				\clip ($(4.2,0.2)+(227:.5)$) arc (-133:47:.5) --++ (137:4) arc (47:227:.5) -- cycle;
				\fill[ForestGreen!60!yellow!80!white] (0,-0.5) arc (270:90:0.5) -- (4,0.5) arc (90:-90:0.5) -- cycle;
			\end{scope}
			\begin{scope}
				\clip ($(-0.3,0) + (43:.5)$) arc (43:-137:.5) --++(133:4) arc (233:43:.5) -- cycle;
				\fill[ForestGreen!60!yellow!80!white] ($(-3,3.15) + (273:.5)$) arc (273:93:.5) --++ (3:4) arc (93:-87:.5) -- cycle;
			\end{scope}
			\begin{scope}
				\clip ($(4.2,0.2)+(227:.5)$) arc (-133:47:.5) --++ (137:4) arc (47:227:.5) -- cycle;
				\fill[ForestGreen!60!yellow!80!white] ($(-3,3.15) + (273:.5)$) arc (273:93:.5) --++ (3:4) arc (93:-87:.5) -- cycle;
			\end{scope}
			\draw (-0.3,0) --++ (133:4) --++ (223:4)--++(313:4)--cycle;
			\draw (0,0) --++ (0:4) --++ (-90:4) --++(-180:4) --cycle;
			\draw (-3,3.15) --++ (3:4) --++ (93:4) --++(183:4) --cycle;
			\draw (4.2,0.2) --++ (137:4) --++ (47:4) --++ (-43:4)--cycle;
			\draw [<->] (2,.5) -- (2,-.5) node [anchor = north]{$2\eta$}; 
			\draw [<->] ($(-0.3,0) + (133:2) + (43:.5)$) --++ (223:1) node [anchor=north east] {$2\eta$};
			
			\fill (0,0) circle (1.5pt) node [anchor = north west] {$a$};
			\fill (4,0) circle (1.5pt) node [anchor = north west] {$d$};
			\fill (-3,3.15) circle (1.5pt) node [anchor = south east] {$b$};
			\fill ($(-3,3.15)+(3:4)$) circle (1.5pt) node [anchor = south west] {$c$};
			
			\fill (-0.3,0) circle (1.5pt) node [anchor = east] {$a'$};
			\fill ($(-0.3,0)+(133:4)$) circle (1.5pt) node [anchor = east] {$b'$};
			\fill (4.2,0.2) circle (1.5pt) node [anchor = west] {$d'$};
			\fill ($(4.2,0.2) + (137:4)$) circle (1.5pt) node [anchor = north east] {$c'$};
			
			\draw [thick,red] (0,.2) to[out=0,in=180] (2,-.2) to [out=0,in=225] (3.75,.1) to[out=45,in=-60] (2,2) to[out=120,in=0] (1,3.25) to[out=180,in=-45] (-1,3.2) to[out=135,in=90] (-2.5,3) to[out=-90,in=135] (-1.5,1) to[out=-45,in=180] (0,.2);
			
			\draw (0,.4) node [red, anchor=south] {$u(\partial E_0)$};
			
			\draw (-3,0) node {$v_2(\overline{E_2})$};
			\draw (2,-2) node {$v_3(\overline{E_3})$};
			\draw (4.1,3) node {$v_4(\overline{E_4})$};
			\draw (-1,5.25) node {$v_1(\overline{E_1})$};			
		\end{tikzpicture}
		\caption{The four rotated squares $v_1(\overline{E_1}),\ldots, v_4(\overline{E_4})$ (which have side length $1$). The connected image $u(\partial E_0)$ (colored in red) is contained in the four closed tubes (colored in yellow and green) of thickness $2\eta$. The points $a,b,c,d,a',b',c',d'$ are defined as in \eqref{abcd}.}\label{fig:boundary}
	\end{figure}	
	
	The goal for the remainder of this proof is to show that the polygons $abcd$ and $a'b'c'd'$ are close to a parallelogram with a small error in terms of powers of $\eta$; note that
	$S_1e_1 =S_3e_1$ (or $R_2e_2 = R_4e_2$) if $abcd$ (or $a'b'c'd'$) is a parallelogram.
	First, we focus on the polygon $abcd$ and estimate the deviation of $c-b=v_3(x_3) - v_3(x_2) = S_3 e_1$ from $d-a = v_1(x_4) - v_1(x_1) = S_1e_1$.
	In light of \eqref{abcd}, \eqref{a-a'} and the fact that $v_2,v_4$ are a rigid body motions, we find that 
	\begin{align}\label{bc_annulus}
		b\in \overline{A(a,1-4\eta,1+4\eta)} \qand c\in \overline{A(d,1-4\eta, 1+4\eta)}
	\end{align}
	if $\eta$ is sufficiently small, cf.~also for the notation of the annuli \eqref{annulus}. 
	Moreover, it holds that 
	\begin{align*}
		|d-a|=1 \qand |b-c|=1
	\end{align*} 
	since $v_1$ and $v_3$ are rigid body motions.

	\medskip	
	
	\textit{Step 2: Auxiliary function.} In this step, we show that there exists a continuous and piecewise affine function $\bar{v}:E\to \R^2$ such that
	$\bar{v}\restrict{E_i}$ is a rigid body motion, and
	\begin{align}\label{v-barv}
		\norm{v_i-\bar{v}}_{W^{1,p}(E_i;\R^2)}\leq C \eta^{\frac{1}{3}}.
	\end{align}
	for every $i\in\{1,\ldots,4\}$. Such a function is uniquely determined on $E'$ by the vertices $\bar{v}(x_1),\ldots, \bar{v}(x_4)$. Finding such suitable points is the goal of this next step.
	
	\smallskip
	
	\textit{Step 2a: Auxiliary points.}
	We show that there exist $\bar{b}\in a+ \Scal^1$ and $\bar{c} \in d+\Scal^1$ such that $|\bar{b} -\bar{c}|=1$ and
	\begin{align}\label{quadratloesung}
		|\bar{b}-b| + |\bar{c}-c| \leq C \eta^\frac{1}{3}.
	\end{align}
	\begin{figure}
		\centering
		\begin{tikzpicture}[scale=.7]
			\coordinate (a) at (0,0);
			\coordinate (b) at (129:4.1);
			\coordinate (bb) at (135:4);
			\coordinate (cc) at (0,0);
			\coordinate (d) at (4,0);
			\def\ringa{(a) circle (3.5) (a) circle (4.5)}
			\def\ringb{(d) circle (3.5) (d) circle (4.5)}
			
			\path [draw=none,fill=Cerulean!60!white,even odd rule] (a) circle (4.5) (a) circle (3.5);
			\path [draw=none,fill=Dandelion!80!white,even odd rule] (d) circle (4.5) (d) circle (3.5);
			
			\begin{scope}[even odd rule]
		        \clip \ringa;
		        \fill[fill=Dandelion!50!Cerulean!80!white] \ringb;
		    \end{scope}
		    
			\draw (a) circle (4);
			\fill (a) circle (2pt);
			\draw[name path = circle1] (d) circle (4);
			\path [name path = bigcircle1] (d) circle (4.5);
			\path [name path = smallcircle1] (d) circle (3.5);
			\fill (d) circle (2pt);
			\draw (d) node [anchor = north west]{$d$};
			
			\draw [<->] (-4.5,0) --++(1,0);
			\draw (-3.95,0) node[anchor = south east] {$8\eta$};
			\draw [<->] (7.5,0) --++(1,0);
			\draw (7.95,0) node[anchor = north west] {$8\eta$};

			\fill (b) circle (2pt);
			\draw [name path = circle3] (b) circle (4);
			\draw (b) node [anchor = south west]{$b$};
			\begin{scope}[even odd rule]
				\clip \ringb;
				\draw[ultra thick,red] (b) circle (4);
			\end{scope}
			\draw[name path = circle2, blue] (bb) circle (4);
			\fill[blue] (bb) circle (2pt);
			\draw (bb) node [anchor = east]{$\bar{b}$};
			\path [name intersections={of=circle1 and circle2}];
			\fill[blue] (intersection-2) circle (2pt);
			\draw [dashed] (0,0) -- (4,0) -- (intersection-2) -- (135:4)--cycle;
			\draw ($(intersection-2)-(0.1,0)$) node [anchor = south east] {$\bar{c}$};
			
			\path [name intersections={of=bigcircle1 and circle3}];
			\draw [thick, black,shorten >=-90] (intersection-1) -- (intersection-2);
			\draw [thick, black,shorten >=-60] (intersection-2) -- (intersection-1);
			\path [name intersections={of=smallcircle1 and circle3}];
			\draw [thick, black,shorten >=-90] (intersection-1) -- (intersection-2);
			\draw [thick, black,shorten >=-115] (intersection-2) -- (intersection-1);
			
			\draw [<->] (-2,-4.8) --++ (157:.7) node [anchor=north east] {$C\eta^{\frac{2}{3}}$};
			
			\draw ($(a)+(.1,0)$) node [anchor = north west]{$a$};
		\end{tikzpicture}
		\caption{The two colored annuli describe the regions in which the points $b,c$ can lie once $a$ and $d$ are fixed.
		If $b$ is sufficiently far from $d$, then we choose $\bar{b}\in a + \Scal^1$ such that $|b-\bar{b}|$ is small and compare the intersection of $\bar{b}+\Scal^1$ and $d + \Scal^1$ with the point $c$, which lies in the intersection of the orange annulus with $b+\Scal^1$ (colored in red).}\label{fig:kandinsky}
	\end{figure}
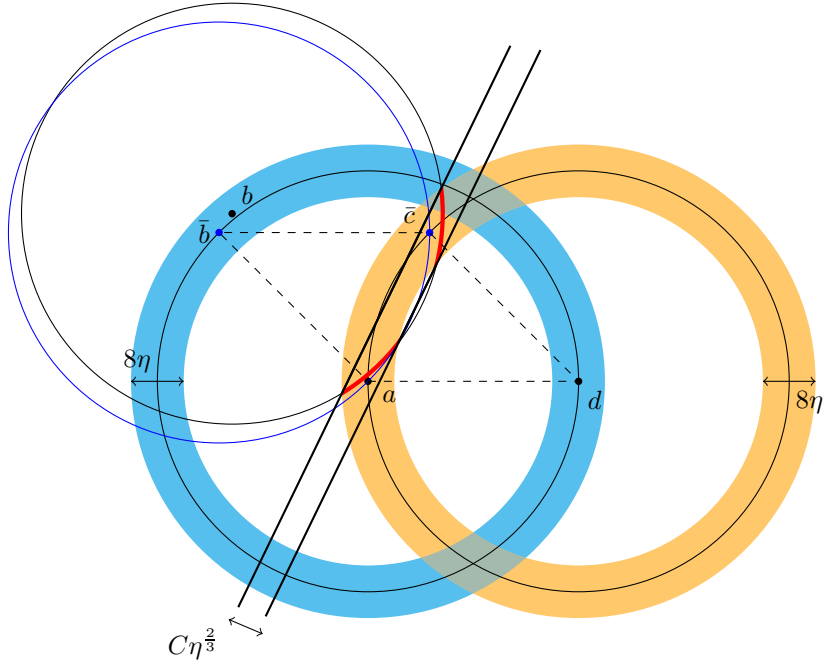
	If $|b-d|\leq \eta^\frac{1}{3}$, then we choose $\bar{b} = d$ and an arbitrary $\bar{c}\in d + \Scal^1$ such that $|\bar{c} - c| \leq 4 \eta$.
	In this case, \eqref{quadratloesung} holds if $\eta\ll 1$.
	
	Now, let $|b-d|>\eta^{\frac{1}{3}}$, then we choose an arbitrary $\bar{b}\in a + \Scal^1$ with $|b-\bar{b}| \leq 4\eta$ and search for a point $\bar{c}\in d +\Scal^1$ that satisfies $|\bar{b}-\bar{c}|=1$; note that there exist at least one but at most two options. 
	In any case, the following system of equations has to be satisfied by $\bar{c}$ and $c$:
	\begin{align*}
		\begin{cases}
			|\bar{c}|^2 - 2\bar{c}\cdot d + |d|^2 = |\bar{c}-d|^2=1,\\
			|\bar{c}|^2 - 2\bar{c}\cdot \bar{b} + |\bar{b}|^2 = |\bar{c}-\bar{b}|^2=1,
		\end{cases}\qand
		\begin{cases}
			|c|^2 - 2c\cdot d + |d|^2 = |c-d|^2=r^2,\\
			|c|^2 - 2c\cdot b + |b|^2 = |c-b|^2=1,
		\end{cases}
	\end{align*}
	for some $r\in [1-4\eta,1+4\eta]$.
	By suitably combining these equations, we obtain for the difference $c-\bar{c}$ in the direction $d-b$ that
	\begin{align}\label{one_direction_est1}
		2(c-\bar{c})\cdot(d-b) &=2c\cdot(d-b) - 2\bar{c}\cdot (d-\bar{b}) + 2\bar{c}\cdot (b - \bar{b})\nonumber\\
			&\leq 1-r^2 + |d|^2 - |b|^2  - |d|^2 + |\bar{b}|^2  + 2|\bar{c}||b-\bar{b}|\nonumber\\
			&\leq 1-r^2 + 4(|\bar{b}|-|b|)(|\bar{b}|+|b|) + 8\eta|\bar{c}|\nonumber\\
			&\leq 8\eta + 16\eta^2 + 4\eta(|b-a|+|\bar{b}-a| + 2|a|) + 8\eta(|\bar{c} - d| + |d-a| +|a|)\nonumber\\
			&\leq 24\eta + 4\eta(3+2|a|) + 8\eta(3+|a|)
	\end{align}
	since $\eta\ll 1$. Without loss of generality, we may now assume that $a=0$, otherwise we move the coordinate system. 
	Since $|d-b|>\eta^{\frac{1}{3}}$ we conclude that
	\begin{align}\label{one_direction_est2}
		\left|(c-\bar{c})\cdot\frac{d-b}{|d-b|}\right| \leq C \frac{\eta}{\eta^{\frac{1}{3}}} = C \eta^{\frac{2}{3}}.
	\end{align}
	Essentially, this estimate ensures that $\bar{c}$ lies in an infinitely long tube in the direction $(d-b)^\perp$ with thickness $C\eta^{\frac{2}{3}}$ around $c$, see also Figure \ref{fig:kandinsky}.
	The intersection of this tube with the annulus $\overline{A(d,1-4\eta, 1+4\eta)}$ has at most two connected components $T_1,T_2\subset \R^2$.
	If $T_1\neq T_2$, then we find one of the two possible choices for $\bar{c}$ in each of the two sets. 
	Naturally, we select $\bar{c}$ to be in the same component as $c$.
	While the width (measured in the direction $\frac{b-d}{|b-d|}$) of $T_1\cup T_2$ is at most $C\eta^{\frac{2}{3}}$ due to \eqref{one_direction_est2}, its height (measured in the direction $\frac{(b-d)^\perp}{|b-d|}$) becomes largest as soon as the two sets touch. In this case, it holds that $T_1=T_2$ and we may select any of the two choices for $\bar{c}$.
	We then estimate via Pythagoras that
	\begin{align}\label{one_direction_est3}
		\left|(c-\bar{c})\cdot\frac{(d-b)^\perp}{|d-b|}\right| \leq C\sqrt{(1+4\eta)^2 - (1-4\eta-C\eta^{\frac{2}{3}})^2} \leq C \sqrt{\eta^{\frac{2}{3}}} = C\eta^{\frac{1}{3}}
	\end{align}
	since $\eta \ll 1$.	This yields the desired estimate \eqref{quadratloesung}.
	
	\smallskip	
	
	\textit{Step 2b: Construction of the auxiliary function $\bar{v}$.} 
	Let $\bar{v}: \overline{E'} \to \R^2$ be continuous such that $\bar{v}\restrict{E_i}$ is a rigid body motion for every $i\in\{1,\ldots, 4\}$, and
	\begin{align}\label{design_barv}
		\bar{v}(x_1) = a,\quad \bar{v}(x_2) = \bar{b},\quad \bar{v}(x_3) =\bar{c},\quad\text{and } \bar{v}(x_4) = d.
	\end{align}
	Exactly as in the proof of Lemma \ref{lem:affine} (cf.~Case 1), we can continuously extend $\bar{v}$ to a piecewise affine function defined on all of $E$. 
	We now aim to prove the estimate \eqref{v-barv}.
	Indeed, we first observe that $\bar{v}=v_1$ on $E_1$ by design. 
	We then consider the case $i=3$. Under consideration of \eqref{abcd}, \eqref{design_barv} and \eqref{quadratloesung}, it holds that
	\begin{align*}
		\big|\big(\bar{v}(x_3)-\bar{v}(x_2)\big) - \big(v_3(x_3) - v_3(x_2)\big)\big| = |(\bar{c}-\bar{b}) - (c-b)| \leq |\bar{c}-c| + |\bar{b}-b| \leq C\eta^{\frac{1}{3}}.
	\end{align*}
	Since $v_3$ and $\bar{v}$ are both rigid body motions on the bounded set $E_3$, and $|Q-Q'|=\sqrt{2}|Qe_1-Q'e_1|$ for all $Q,Q'\in \SO(2)$, we conclude that
	\begin{align*}
		\norm{v_3-\bar{v}}_{W^{1,p}(E_3;\R^2)}\leq C \eta^{\frac{1}{3}}.
	\end{align*}
	As for $i\in\{2,4\}$, we repeat the same strategy and recall the estimates \eqref{a-a'}. Now, we have proven the desired estimate \eqref{v-barv}.
	
	We also point out that
	\begin{align}\label{u-barv}
		\norm{u-\bar{v}}_{W^{1,p}(E_i;\R^2)}\leq C \eta^{\frac{1}{3}}\quad\text{for every } i\in\{1,\ldots, 4\}
	\end{align}
	in view of \eqref{uniform_eta} and \eqref{v-barv}.
	
	\medskip
	
	\textit{Step 3: Estimating $|S_1-S_3|$ and $|R_2-R_4|$.} In the following, we differentiate between the different possible geometric of outcomes for $\bar{v}(\partial E_0)$ (in other words the polygon $a\bar{b}\bar{c}d$). 
	While some geometries (such as the case that $a\bar{b}\bar{c}d$ is a parallelogram) provide the desired estimates for $|S_1-S_3|$ and $|R_2-R_4|$, others will be excluded via the non-interpenetration of $u$.
	
	\smallskip
	
	\textit{Step 3a: The $\bar{v}(\partial E_0)$ is a parallelogram.}
	In this case, it holds that
	\begin{align}\label{parallelogram}
		d-a = \bar{c}-\bar{b}\qand \bar{b}-a = \bar{c}-d.
	\end{align}
	It is then straightforward to derive 
	\begin{align*}
		|S_1 - S_3| = \sqrt{2}|(S_3 - S_1)e_1| = \sqrt{2}|(c-b) - (d-a)| \leq\sqrt{2}(|(\bar{c}-\bar{b}) - (d-a)| + |\bar{c}-c| + |\bar{b}-b|) \leq  C\eta^{\frac{1}{3}}
	\end{align*}
	from \eqref{abcd}, and \eqref{quadratloesung}.
	We analogously conclude, under additional consideration of \eqref{a-a'}, that $|R_2-R_4| \leq C\eta^{\frac{1}{3}}$.
	Together with \eqref{fjm_inital_estimate}, these inequalities already prove the desired the estimate \eqref{luftschloss_absch} if $\eta\ll 1$.
	
	\smallskip	
	
	\textit{Step 3b: The $\bar{v}(\partial E_0)$ is not a parallelogram.}
	In this case, it holds that $\bar{b}=d$ or $\bar{c}=a$. 
	We may, due to symmetry reasons, assume without loss of generality that $\bar{b}=d$.
	Moreover, let 
	\begin{align}\label{rotated_branch}
		\bar{c}-\bar{b} = R_\ffi(a-d)
	\end{align}
	for some $\ffi\in[0,\pi]$ (the case $\ffi\in [0,-\pi]$ can be handled analogously), cf.~Figure \ref{fig:exclusion}.
	If $\ffi=\pi$, then \eqref{parallelogram} also holds and we obtain \eqref{luftschloss_absch} exactly as in Step 3a.

	What follows is a discussion of the cases $\ffi\in[0,\pi)$ where an overlap of the deformed squares $\bar{v}(E_i)$ occurs, see Figure \ref{fig:exclusion}.
	In light of Lemma \ref{lem:approx_ciarlet} applied to $M=\mathrm{int}\, E'$, $u,p$ as given, and $v=\bar{v}$, we obtain the inequality
	\begin{align*}
		|E'| \leq |\bar{v}(E')| + C_0\eta^{\frac{1}{3}},
	\end{align*}
	for a constant $C_0>0$; here we used \eqref{u-barv} and that $\bar{v}$ is a rigid body motion on each $E_i$, $i\in\{1,\ldots,4\}$.
	We further simplify this estimate to
	\begin{align}\label{ciarlet_rigid}
		4 - C_0\eta^{\frac{1}{3}} \leq |\bar{v}(E')| \leq 4.
	\end{align}
	If $\ffi = 0$, then the continuity and the design of $\bar{v}$ yields that $|\bar{v}(E')| = 2$, so that \eqref{ciarlet_rigid} yields a contradiction if $\eta\ll 1$.
	Due to monotonicity reasons, we can also exclude all cases $\ffi\in[0,\frac{\pi}{2}]$ since $|\bar{v}(E')|=3$ for $\ffi=\frac{\pi}{2}$. We shall thus assume from now on that $\ffi\in(\frac{\pi}{2},\pi)$.
	
	\begin{figure}
		\centering
		\begin{tikzpicture}[scale=1.4]
			\draw [orange](0,-1) rectangle (1,0);
			\draw [blue](-1,0) rectangle (0,1);
			\draw [ForestGreen](1,0) rectangle (2,1);
			\draw [red](0,1) rectangle (1,2);
			\draw (0.5,0.5) node {$E_0$};
			\draw [orange](0.5,-0.5) node {$E_1$};
			\draw [blue](-0.5,0.5) node {$E_2$};
			\draw [red](0.5,1.5) node {$E_3$};	
			\draw [ForestGreen](1.5,0.5) node {$E_4$};
			
			\fill (0,0) circle (1.5pt);
			\draw (0,0) node [anchor = north east] {$\small x_1$};
			\fill (0,1) circle (1.5pt);
			\draw (0,1) node [anchor = south east] {$\small x_2$};
			\fill (1,1) circle (1.5pt);
			\draw (1,1) node [anchor = south west] {$\small x_3$};
			\fill (1,0) circle (1.5pt);
			\draw (1,0) node [anchor = north west] {$\small x_4$};
			
			\draw[->] (2.5,0.5) to[out=45,in=135] (4,0.5);
			\draw (3.25,1) node {$\bar{v}$};
			
			\begin{scope}[shift={(4.75,.5)}]	
				\draw[blue] (0,0) rectangle (1,1);
				\draw[orange] (0,0) rectangle (1,-1);
				
				\draw[red] (1,0) --++(-45:1) --++(45:1) --++(135:1)--cycle;
				\draw[ForestGreen] (1,0) --++(-45:1) --++(-135:1) --++(135:1)--cycle;
				
  				\draw [->,black,domain=180:315] plot ({1+ 0.4*cos(\x)}, {0.4*sin(\x)}) node [anchor = south west]{$\ffi$};
  				\draw [pattern=north west lines, pattern color=purple!50!white] (1,0) --++ (225:1) -- (1,-1) -- cycle;
  				\draw [->,black,domain=225:270] plot ({1+ 0.7*cos(\x)}, {0.7*sin(\x)}) node [anchor = west]{$\theta$};
  				
  				\fill (0,0) circle (1.5pt);
				\draw (0,0) node [anchor = east] {$a$};
				\fill (1,0) circle (1.5pt);
				\draw (1.05,0) node [anchor = west] {$\bar{b}$};
				\draw (1,0) node [anchor = south east] {$d$};
				\fill ($(1,0)+(-45:1)$) circle (1.5pt);
				\draw ($(1,0)+(-45:1)$) node [anchor = north west] {$\bar{c}$};
				
				\draw [orange](0.5,-1.5) node {$\bar{v}(E_1)$};
				\draw [blue](0.5,1.5) node {$\bar{v}(E_2)$};
				\draw [red] (2.3,.7) node {$\bar{v}(E_3)$};
				\draw [ForestGreen] (1.7,-1.3) node {$\bar{v}(E_4)$};
			\end{scope}
		\end{tikzpicture}
		\caption{An illustration of the reference configuration $E$ and its deformed configuration under the continuous map $\bar{v}$. Here, the points $a\bar{b}\bar{c}d$ do not form a parallelogram, thus leading to an overlap of the deformed squares $\bar{v}(E_1)$ and $\bar{v}(E_4)$ of at least the hatched triangle with $\theta = \pi - \ffi$.}\label{fig:exclusion}
	\end{figure}
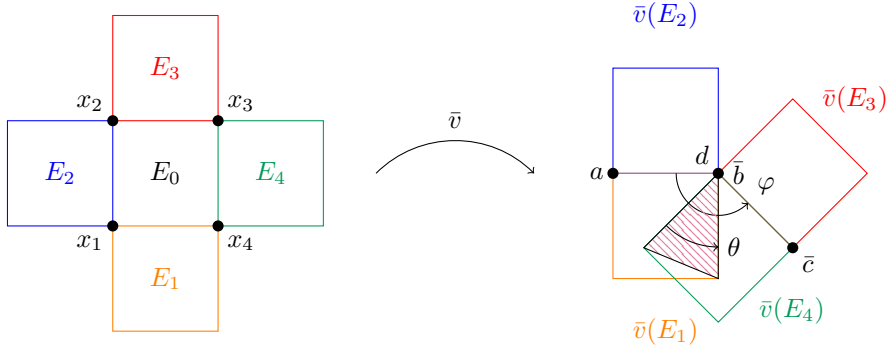
	
	The region $D=\bar{v}(E_1)\cap\bar{v}(E_4)$ in which we observe an overlap has at least measure $\frac{1}{2}\sin (\pi-\ffi) = \frac{1}{2}\sin\ffi$; in other words, $|\bar{v}(E')|\leq 4 - \frac{1}{2}\sin \ffi$.
	Let $\eta$ be small enough that $C_0\eta^{\frac{1}{3}}< \frac{1}{2}$, then there exists $\ffi_0\in (\frac{\pi}{2},\pi)$ such that $\frac{1}{2}\sin\ffi_0 = C_0\eta^{\frac{1}{3}}$. 
	We then derive from \eqref{ciarlet_rigid} the contradiction
	\begin{align}\label{exclusion}
		4 - C_0\eta^{\frac{1}{3}} \leq |\bar{v}(E')| \leq 4-\frac{1}{2}\sin \ffi < 4 - \frac{1}{2}\sin \ffi_0 = 4 - C_0\eta^{\frac{1}{3}},
	\end{align}
	for every $\ffi\in (\frac{\pi}{2},\ffi_0)$.
	In the cases $\ffi\in[\ffi_0, \pi)$, it holds that $0<\sin\ffi \leq \sin \ffi_0$ and thus,
	\begin{align}\label{S3-S1}
		|S_3 - S_1| &= \sqrt{2}|(S_3-S_1)e_1| \leq \sqrt{2}(|(\bar{c}-\bar{b}) - (d-a) | + |\bar{b}-b| + |\bar{c}-c|)\nonumber \\
		&= \sqrt{2}|(-R_\ffi - \Id)(d-a)|  + C\eta^{\frac{1}{3}}\leq C\big(|-R_\ffi-\Id| + \eta^{\frac{1}{3}}\big) \leq C \big(|\sin \ffi| + \big| -1 - \cos\ffi\big| + \eta^{\frac{1}{3}}\big)\nonumber \\
		&\leq C \big(|\sin \ffi| + \big| 1 - |\cos\ffi|\big| + \eta^{\frac{1}{3}}\big) \leq  C \big(|\sin \ffi| + \big| 1- \sqrt{1-\sin^2\ffi}\big| + \eta^{\frac{1}{3}}\big)\nonumber \\
		&\leq  C \big(|\sin \ffi| + \sqrt{| 1- 1+\sin^2\ffi|} + \eta^{\frac{1}{3}}\big) \leq C \big(|\sin \ffi_0| +\eta^{\frac{1}{3}}\big) \leq C\eta^{\frac{1}{3}}
	\end{align}
	under consideration of \eqref{rotated_branch}, \eqref{quadratloesung}, and the H\"older-continuity of the square-root.
	Analogously, obtain the same estimate for $|R_2-R_4|$ if we take \eqref{a-a'} into account.
	We now set $S:=S_1$ and $R:=R_2$ and thus, obtain the the desired estimate \eqref{luftschloss_absch} in light of \eqref{fjm_inital_estimate}, \eqref{uniform_eta} for $\eta\ll 1$.
	
	\medskip
	
	\textit{Step 4: The scalar product estimate.} We now prove \eqref{approx_scalar} with exponent $\frac{1}{3}$ on the right-hand side. 
	Indeed, if $a\bar{b}\bar{c}d$ is not a parallelogram, we may assume (as in Step 3b) that $\bar{b} = d$ and obtain that
	\begin{align*}
		Se_1 \cdot Re_1 &= Se_1\cdot (a'-b')^\perp =  Se_1\cdot(a'-a)^\perp + Se_1\cdot(b-b')^\perp + Se_1\cdot(\bar{b}-b)^\perp + Se_1 \cdot (a-\bar{b})^\perp\\
		&\geq - C\eta^{\frac{1}{3}} + Se_1 \cdot (a-\bar{b})^\perp = -C \eta^{\frac{1}{3}} - (d-a)\cdot (d-a)^\perp = -C \eta^{\frac{1}{3}}
	\end{align*}
	due to \eqref{a-a'}, \eqref{quadratloesung}.
	If $a\bar{b}\bar{c}d$ is a parallelogram and 
	\begin{align}\label{barv_pos_det}
		Se_1 \cdot (a-\bar{b})^\perp =\nabla \bar{v}\restrict{E_1} e_1 \cdot \nabla \bar{v} \restrict{E_2}e_1 >0,
	\end{align}
	then it holds that
	\begin{align}\label{barv_parallelogram}
		\begin{split}
			Se_1\cdot Re_1 \geq - C\eta^{\frac{1}{3}} + Se_1 \cdot (a-\bar{b})^\perp \geq -C\eta^{\frac{1}{3}} + \nabla \bar{v}\restrict{E_1} e_1 \cdot \nabla \bar{v} \restrict{E_2}e_1 > -C\eta^{\frac{1}{3}}.
		\end{split}
	\end{align}
	
	Lastly, we deal with the case that \eqref{barv_pos_det} is not satisfied. If $Se_1 \cdot (a-\bar{b})^\perp=0$, then there is nothing to prove. 
	Otherwise, let $\ffi\in(-\frac{\pi}{2},\frac{\pi}{2})$ be such that $\bar{R}e_2 := \bar{v}\restrict{E_2}e_2 = -R_{\ffi}\bar{v}\restrict{E_1}e_2$ and note that for $\ffi=0$ it holds that $|\bar{v}(E')|=1$, which causes a contradiction to \eqref{ciarlet_rigid} for $\eta\ll 1$. Henceforth, we shall only cover $\ffi\in(0,\frac{\pi}{2})$ due to symmetry reasons.
	In these cases, we observe an overlap of $\bar{v}(E_1)$ and $\bar{v}(E_4)$ with at least measure $\frac{1}{2}\sin(\frac{\pi}{2}-\ffi) = \frac{1}{2}\cos \ffi$.
	We now proceed analogously to Step 3b to find some $\ffi_0\in (0,\frac{\pi}{2})$ such that $\frac{1}{2}\cos\ffi_0 = C_0\eta^{\frac{1}{3}}$ with $C_0$ as in \eqref{ciarlet_rigid}.
	Every geometry resulting from $\ffi\in(0,\ffi_0)$ can then be excluded as in \eqref{exclusion}. On the other hand, for $\ffi\in(\ffi_0,\frac{\pi}{2})$, it holds that
	\begin{align*}
		Se_1 \cdot (a-\bar{b})^\perp = \det(Se_1|\bar{R}e_2)  = - \sin (\frac{\pi}{2}-\ffi) = - \cos \ffi \geq -\cos \ffi_0 = - 2C_0\eta^{\frac{1}{3}},
	\end{align*}
	and hence $Se_1\cdot Re_1 \geq - C\eta^{\frac{1}{3}}$ similarly to \eqref{barv_parallelogram}.

	\medskip
	
	\textit{Step 5: Improving the estimate.} In light of Step 3, we find that either $\bar{v}(\partial E_0)$ forms a parallelogram, or it holds that $\bar{b}=d$ or $\bar{c}=a$ together with \eqref{rotated_branch} for $\ffi\geq\frac{\pi}{2}$.
	We find in either case that $|\bar{c}-a|\geq\sqrt{2}$ or $|\bar{b}-d|\geq\sqrt{2}$ due to the parallelogram identity or the choice $\ffi\geq\frac{\pi}{2}$.
	In light of \eqref{quadratloesung}, it holds that $|c-a|\geq l$ or $|b-d|\geq l$ for some constant length $l>0$ if $\eta$ is sufficiently small. 
	Let us assume that the latter inequality is true.
	We then repeat the procedure in Steps 2 - 4 to improve the estimates \eqref{luftschloss_absch} and \eqref{approx_scalar}.
	Precisely, we first construct a new auxiliary function $\tilde{v}: E\to \R^2$ similar to $\bar{v}$ as in Step 2, for which we first need to find two points $\tilde{b}\in a+\Scal^1$ and $\tilde{c}\in d+\Scal^1$ such that $|\tilde{b}-\tilde{c}|=1$ and 
	\begin{align}\label{quadratloesung2}
		|\tilde{b} - b| + |\tilde{c}-c| \leq C\eta^{\frac{1}{2}}.
	\end{align}
	To this end, first choose any $\tilde{c}\in d+\Scal^1$ such that $|\tilde{c} - c|<4\eta$ and continue as in the proof of \eqref{one_direction_est1}-\eqref{one_direction_est3}, exploiting the new estimate $|b-d|>l$ for some constant $l>0$ independent of $\eta$.
	The rest of the proof works analogously to the Steps 2b - 4.
	
	\medskip
	
	\textit{Step 6: The case $\mu \in (0,1)$.} The general strategy is quite similar. 
	Step 1 stays essentially the same with the difference being $b\in \overline{A(a,\mu-4\eta,\mu+4\eta})$ and $c\in \overline{A(d,\mu-4\eta,\mu+4\eta})$ instead of \eqref{bc_annulus}.
	
	Our next task is to find auxiliary points $\bar{b}\in a+\mu \Scal^1$ and $\bar{c}\in d+\mu\Scal^1$ such that $|\bar{b}-\bar{c}|=1$ and \eqref{quadratloesung2} similar to Step 2a; 
	the construction of the auxiliary function $\bar{v}:E\to \R^2$ with the help of these points works exactly as before.
	Since $\mu<1$ we always find that $|b-d|>l$ for some $l>0$ independent of $\eta$ if $\eta\ll 1$ is small enough. We may hence argue as in Step 5 to establish \eqref{quadratloesung2}.
	Setting $S:=S_1$ and $R:=R_2$, we then need to prove estimates for $|S - S_3|$, $|R - R_4|$ and the scalar product $Se_1\cdot Re_1$ similarly to Steps 3 - 4.
	
	To this end, we seek to exclude invalid geometries for $\bar{v}(\partial E_0)$ with the help of the approximate Ciarlet-Ne{\v c}as condition from Lemma \ref{lem:approx_ciarlet}. 
	The estimate \eqref{ciarlet_rigid} changes in this case to
	\begin{align}\label{ciarlet_rigid2}
		2(1+\mu^2) - C_0\eta^{\frac{1}{2}} \leq |\bar{v}(E')| \leq 2(1+\mu^2).
	\end{align}
	The procedure to produce a contradiction to \eqref{ciarlet_rigid2} for $\eta\ll 1$ is now very similar to what we presented in Steps 3 - 4 and is based again on finding suitable rectangles that emerge from overlapping two neighboring rigid squares. 
	Since the general methodology is virtually the same, we shall only explain one scenario in detail for illustration, see also Figure \ref{fig:exclusion2}.
	Let us assume that $\bar{b} - a = R_\ffi (d-a)^\perp$ for $\ffi\in [0,\frac{\pi}{2})$, then we find that the intersection of $\bar{v}(E_3)$ and $\bar{v}(E_4)$ contains at least a triangle of measure $\frac{1}{2}\mu^2\sin(\theta)$ with $\theta=\frac{\pi}{2}-\ffi$. 
	Choose now for sufficiently small $\eta$ an angle $\ffi_0$ in such a way that $\frac{1}{2}\mu^2\cos\ffi_0 = C_0\eta^{\frac{1}{2}}$ so that the monotonicity of the cosine on $[0,\frac{\pi}{2}]$ generates a contradiction to \eqref{ciarlet_rigid2} for every $\ffi\in (0,\ffi_0)$. 
	In the remaining cases, a direct calculation in the spirit of \eqref{S3-S1} shows that
	\begin{align*}
		|Se_1 - S_3e_1| \leq C(\cos \ffi +\eta^{\frac{1}{2}})\leq C(\cos \ffi_0 +\eta^{\frac{1}{2}}) \leq C\eta^{\frac{1}{2}}
	\end{align*}
	and similarly for $|Re_2-R_4e_2|$, which proves the desired estimate \eqref{luftschloss_absch} when combined with \eqref{fjm_inital_estimate}. 
	The scalar product estimate \eqref{approx_scalar}, cf.~Step 4, can be handled analogously.
	
	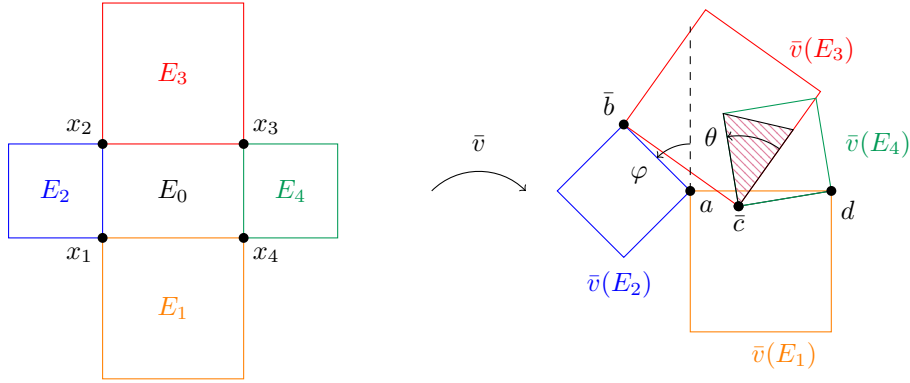
\begin{figure}
		\centering
		\begin{tikzpicture}[scale=1.25]
			\draw [orange](0,-1.5) rectangle (1.5,0);
			\draw [blue](-1,0) rectangle (0,1);
			\draw [ForestGreen](1.5,0) rectangle (2.5,1);
			\draw [red](0,1) rectangle (1.5,2.5);
			\draw (.75,0.5) node {$E_0$};
			\draw [orange](.75,-0.75) node {$E_1$};
			\draw [blue](-0.5,0.5) node {$E_2$};
			\draw [red](.75,1.75) node {$E_3$};	
			\draw [ForestGreen](2,0.5) node {$E_4$};
			
			\fill (0,0) circle (1.5pt);
			\draw (0,0) node [anchor = north east] {$\small x_1$};
			\fill (0,1) circle (1.5pt);
			\draw (0,1) node [anchor = south east] {$\small x_2$};
			\fill (1.5,1) circle (1.5pt);
			\draw (1.5,1) node [anchor = south west] {$\small x_3$};
			\fill (1.5,0) circle (1.5pt);
			\draw (1.5,0) node [anchor = north west] {$\small x_4$};
			
			\draw[->] (3.5,0.5) to[out=45,in=135] (4.5,0.5);
			\draw (4,1) node {$\bar{v}$};
			
			\begin{scope}[shift={(6.25,.5)}]	
				\draw[orange] (0,0) rectangle (1.5,-1.5);				
				\draw[blue] (0,0) --++ (135:1) --++ (225:1) --++ (-45:1) -- cycle;
				\coordinate (b) at (135:1);
				\coordinate (d) at (1.5,0);
				
				\path [name path = circle1] (b) circle (1.5);
				\path [name path = circle2] (d) circle (1);
				\path [name intersections={of=circle1 and circle2}];
				\draw (d) -- (intersection-2);
				\fill[black] (intersection-2) circle (1.5pt);
				\pgfmathanglebetweenpoints{\pgfpointanchor{b}{center}}{\pgfpointanchor{intersection-2}{center}}
				\edef\angleBC{\pgfmathresult}
				\pgfmathanglebetweenpoints{\pgfpointanchor{intersection-2}{center}}{\pgfpointanchor{d}{center}}
				\edef\angleCD{\pgfmathresult}
				\draw [red] (135:1) --++({\angleBC+90}:1.5)--++(\angleBC:1.5) -- (intersection-2) -- cycle;
				\draw [ForestGreen] (intersection-2) --++ (\angleCD+90:1) --++ (\angleCD:1) -- (d) -- cycle;
				
				\draw [dashed] (0,0) -- (0,1.75);
				\draw [->,black,domain=90:135] plot ({0.5*cos(\x)}, {0.5*sin(\x)}) node [anchor = north east] {$\ffi$};
				
				\draw[pattern=north west lines, pattern color=purple!50!white] (intersection-2) --++ (\angleBC+90:1) -- ($(intersection-2)+(\angleCD+90:1)$) -- cycle;
				\begin{scope}[shift={(intersection-2)}]
					\draw [->,black,domain={\angleBC-270}:{\angleCD+90}] plot ({0.75*cos(\x)}, {0.75*sin(\x)}) node [anchor=east] {$\theta$};
				\end{scope}
				
				\fill (0,0) circle (1.5pt) node [anchor = north west] {$a$};
				\fill (d)  circle (1.5pt) node [anchor = north west] {$d$};
				\fill (intersection-2) circle (1.5pt) node [anchor = north] {$\bar{c}$};
				\fill (135:1) circle (1.5pt) node[anchor=south east] {$\bar{b}$};
				\draw [orange](1,-1.75) node {$\bar{v}(E_1)$};
				\draw [blue](-.75,-1) node {$\bar{v}(E_2)$};
				\draw [red] (1.4,1.5) node {$\bar{v}(E_3)$};
				\draw [ForestGreen] (2,.5) node {$\bar{v}(E_4)$};
			\end{scope}
		\end{tikzpicture}
		\caption{An illustration of the reference configuration $E$ and its deformed configuration under the continuous map $\bar{v}$. Here, the points $a\bar{b}\bar{c}d$ do not form a parallelogram, thus leading to an overlap of the deformed squares $\bar{v}(E_3)$ and $\bar{v}(E_4)$ of at least the hatched triangle with $\theta=\frac{\pi}{2}-\ffi$.}\label{fig:exclusion2}
	\end{figure}
	
	Finally, we shall point out that quantity $\delta$ in the formulation of the statement is simply a multiple (dependent only on $p$) of $\eta$.
\end{proof}

In the following, we shall briefly remark on the dependence of the constants and rotations in Lemma \ref{lem:luftschloss} on uniform scalings and translations.

\begin{remark}[Scaling analysis]\label{rem:scaling}
	Let $\rho>0$ be arbitrary, then for every $u\in W^{1,p}(\rho E;\R^2)$ that satisfies the Ciarlet-Ne\v{c}as condition \eqref{ciarlet_necas} on $\rho E'$ and for which
	$\norm{\dist(\nabla u,\SO(2))}_{L^p(\rho E')}<\delta_0 \rho^{\frac{2}{p}}$, there exist $R,S\in \SO(2)$ such that
	\begin{align*}
		\norm{\nabla u - S}_{L^p(\rho E_1\cup \rho E_3,\R^{2\times 2})} + \norm{\nabla u - R}_{L^p(\rho E_2\cup \rho E_4,\R^{2\times 2})} \leq C\rho^{\frac{1}{p}} \norm{\dist(\nabla u,\SO(2))}_{L^p(\rho E')}^\frac{1}{2}
	\end{align*}
	and $Re_1 \cdot Se_1 \geq - C \rho^{-\frac{1}{p}}\norm{\dist(\nabla u,\SO(2))}_{L^p(\rho E)}^\frac{1}{2}$. 
	Here, the constants $\delta_0$ and $C$ (which depend only on $p$) and the rotations $S,R$ are exactly the same as in the case $\rho=1$ in Lemma \ref{lem:luftschloss} and are invariant under translations of the domain. 
	This is a direct consequence of a change of variables in the occurring integrals.
\end{remark}

Next, we prove a Poincar\'e inequality for open sets of checkerboard structure. 
The prove is essentially the same as in the case of connected open sets and is based on an argument via contradiction.
This way, however, we do not obtain an explicit dependence of the emerging constant.

\begin{lemma}[Poincar\'e-inequality for sets with path-connected closure]\label{lem:poincare}
	Let $p>2$, $N\in\N$ and let $U_1,\ldots,U_N$ be bounded Lipschitz domains such that the closure of $U:=U_1\cup\ldots\cup U_N$ is path-connected. 
	Then there exists a constant $C>0$ with the following property: 
	For every $u\in W^{1,p}(U;\R^2)\cap C^0(\overline{U};\R^2)\cap L^p_0(U;\R^2)$, it holds that
	\begin{align}\label{poincare0}
		\norm{u}_{L^p(U;\R^2)} \leq C \norm{\nabla u}_{L^p(U;\R^{2\times 2})}.
	\end{align}
\end{lemma}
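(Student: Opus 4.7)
My plan is to argue by contradiction in the standard fashion of proving Poincar\'e-type inequalities, the only nonstandard input being the use of the global continuity together with path-connectedness of $\overline{U}$ to chain constants across the different Lipschitz pieces $U_1,\dots,U_N$.

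Suppose the inequality~\eqref{poincare0} fails. Then there exists a sequence $(u_n)_n\subset W^{1,p}(U;\R^2)\cap C^0(\overline{U};\R^2)\cap L^p_0(U;\R^2)$ with
\[
\norm{u_n}_{L^p(U;\R^2)}=1 \qquad \text{and}\qquad \norm{\nabla u_n}_{L^p(U;\R^{2\times 2})}\to 0
\]
as $n\to\infty$. Since each $U_i$ is a bounded Lipschitz domain and $p>2$, the Rellich--Kondrachov theorem together with the compact Sobolev embedding $W^{1,p}(U_i;\R^2)\hookrightarrow C^0(\overline{U_i};\R^2)$ provides, after a diagonal extraction, a (non-relabeled) subsequence and functions $u^{(i)}\in W^{1,p}(U_i;\R^2)$ such that, for every $i\in\{1,\dots,N\}$,
\[
u_n\to u^{(i)} \text{ in } L^p(U_i;\R^2) \text{ and uniformly on } \overline{U_i}, \qquad \nabla u_n\weakly \nabla u^{(i)}\text{ in }L^p(U_i;\R^{2\times 2}).
\]
The vanishing of the gradients in the limit forces $\nabla u^{(i)}=0$, so that each $u^{(i)}$ is a constant $c_i\in\R^2$.

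Now I would use the global continuity of the $u_n$'s to identify these constants. If $\overline{U_i}\cap\overline{U_j}\neq\emptyset$, pick $x\in\overline{U_i}\cap\overline{U_j}$. The values $u_n(x)$ are well-defined (by continuity on $\overline{U}$) and, by uniform convergence on both $\overline{U_i}$ and $\overline{U_j}$, satisfy $u_n(x)\to c_i$ and $u_n(x)\to c_j$, hence $c_i=c_j$. To promote this to equality of all the $c_i$, I invoke the assumption that $\overline{U}$ is path-connected: for arbitrary $i,j$ there is a continuous curve $\gamma\colon [0,1]\to\overline{U}=\bigcup_k \overline{U_k}$ with $\gamma(0)\in\overline{U_i}$, $\gamma(1)\in\overline{U_j}$. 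The closed sets $\gamma^{-1}(\overline{U_k})$ cover $[0,1]$, so by the connectedness of $[0,1]$ they cannot be pairwise disjoint in a partition sense; a short covering/chain argument then produces indices $k_0=i,k_1,\dots,k_m=j$ with $\overline{U_{k_l}}\cap\overline{U_{k_{l+1}}}\neq\emptyset$ for each $l$. Applying the previous observation along the chain yields $c_i=c_j=:c$ for all $i,j$.

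Finally, the mean-value condition $u_n\in L^p_0(U;\R^2)$ passes to the limit via the $L^p$-convergence on each piece and gives
\[
0=\int_U u_n\dd x \to \sum_{i=1}^N c\,|U_i|=c\,|U|,
\]
so $c=0$. But then $\norm{u_n}_{L^p(U;\R^2)}^p=\sum_{i=1}^N\norm{u_n}_{L^p(U_i;\R^2)}^p\to 0$, contradicting $\norm{u_n}_{L^p(U;\R^2)}=1$. Hence \eqref{poincare0} holds with some constant $C>0$ depending only on $U_1,\dots,U_N$ and $p$. The main technical point, and the one where the hypothesis on $\overline{U}$ is decisive, is precisely the chaining step that propagates the pointwise identification of the constants $c_i$ through the touching closures of the Lipschitz pieces; without either the continuity on $\overline{U}$ or the path-connectedness, the constants on different components could be genuinely distinct and no global Poincar\'e bound would be available.
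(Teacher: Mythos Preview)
Your proof is correct and follows essentially the same contradiction argument as the paper: normalize, extract subsequences on each Lipschitz piece via the compact embedding $W^{1,p}\hookrightarrow C^0$ (using $p>2$), obtain constant limits, identify them through continuity on $\overline{U}$ and path-connectedness, and reach a contradiction via the zero-mean condition. The only difference is that you spell out the chain argument along a connecting path more explicitly than the paper does.
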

\begin{proof}
	We argue via contradiction and assume that for every $j\in\N$ there exists a sequence $(\tilde{u}_j)_{j}\subset W^{1,p}(U;\R^2)\cap C^0(\overline{U};\R^2)\cap L^p_0(U;\R^2)$ and 
	\begin{align}\label{u_j_contra}
		\norm{\tilde{u}_j}_{L^p(U;\R^2)} > j\norm{\nabla \tilde{u}_j}_{L^p(U;\R^{2\times 2})}.
	\end{align}
	We now define $u_j:=\norm{\tilde{u}_j}_{L^p(U;\R^2)}^{-1}\tilde{u}_j$ and obtain that $(u_j)_{j}$ is bounded in $W^{1,p}(U;\R^2)$. 
	In particular, each $u^n_j:=u_j\restrict{U_n}$ for $n\in \{1,\ldots,N\}$ is bounded in $W^{1,p}(U_n;\R^2)$. 
	We can thus find $u^{n}\in W^{1,p}(U_n;\R^2)$ such that 
	\begin{align}\label{limits_u_j}
		\begin{split}
			u^{n}_j\weakly u^{n} &\text{ in } W^{1,p}(U_n;\R^2),\\
			u^{n}_j\to u^{n} &\text{ in } C^0(\overline{U_n};\R^2),
		\end{split}
	\end{align}
	and we set
	\begin{align*}
		u: U\to \R^2, x\mapsto u^{n}(x)\text{ if } x\in U_n.
	\end{align*}
	In light of the weak convergence of $(u_j^{n})_j$, \eqref{u_j_contra} and the lower-semicontinuity of the norm, we find that $u^{k,i}$ is constant on $U_n$ with value, say $d^{n}\in\R^2$.
	Since each $u_j$ is continuous on $\overline{U_n}$, and $\overline{U}$ is path-connected, we find that $d^{n}=d^{m}=d\in\R^2$ for every $n,m\in\{1,\ldots,N\}$.
	Hence, the vanishing mean value of $u$ on $U$ yields that
	\begin{align*}
		0 = \int_{U} u(x) \dd x = \sum_{n=1}^N |U_n| d,
	\end{align*}
	which implies that $d=0$ and $u=0$ on all of $U$.
	On the other hand, we then find a contradiction to $\norm{u}_{L^p(U;\R^2)}=1$ as $u$ is the strong limit of $(u_j)_j$ on $L^p(U;\R^2)$, recall \eqref{limits_u_j} and $\norm{u_j}_{L^p(U;\R^2)}=1$. 
\end{proof}

As we have pointed out above, it is not clear if or how the constant $C$ in \eqref{poincare0} depends on the domain.
We address this issue in the next lemma under additional assumptions and slight change in the domains of integration.
This result serves as the second key ingredient in the characterization of the macroscopic deformation behavior.

\begin{lemma}[Poincar\'e estimate for checkerboard structures]\label{lem:poincare2}
	Let $p>2$, $U'\Subset U$ be bounded Lipschitz domains, and $M>0$. 
	There exists a constant $C>0$ independent of $\eps$ such that for every $u \in W^{1,p}(U;\R^2)$ with
	\begin{align}\label{mean_value_rig}
		\int_{U'\cap\eYstiff} u \dd x=0
	\end{align}
	and
	\begin{align}\label{domain_est}
		\norm{u}_{L^p(U\cap\eYstiff;\R^2)}\leq M \norm{u}_{L^p(U'\cap\eYstiff;\R^2)}
	\end{align}
	it holds that
	\begin{align*}
		\norm{u}_{L^p(U'\cap\eYstiff;\R^2)}\leq C \norm{\nabla u}_{L^p(U\cap\eYstiff;\R^{2\times 2})}.
	\end{align*}
\end{lemma}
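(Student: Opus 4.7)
The plan is to mimic the indirect proof of Poincaré's inequality, using the uniform extension result of Lemma~\ref{lem:extension2}(a) to transfer the problem from the $\eps$-dependent stiff set to the fixed reference domain $U'$. Suppose for contradiction that no such $C$ exists; then there are sequences $\eps_j>0$ and $u_j\in W^{1,p}(U;\R^2)$ fulfilling \eqref{mean_value_rig} and \eqref{domain_est} with $\eps_j$ in place of $\eps$, such that $\|u_j\|_{L^p(U'\cap\eps_j\Ystiff;\R^2)}>j\,\|\nabla u_j\|_{L^p(U\cap\eps_j\Ystiff;\R^{2\times 2})}$. After normalizing so that $\|u_j\|_{L^p(U'\cap\eps_j\Ystiff;\R^2)}=1$, we have $\|\nabla u_j\|_{L^p(U\cap\eps_j\Ystiff;\R^{2\times 2})}\to 0$, and by \eqref{domain_est} also $\|u_j\|_{L^p(U\cap\eps_j\Ystiff;\R^2)}\leq M$. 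If $(\eps_j)_j$ is bounded away from $0$, extracting $\eps_j\to\eps_0>0$ reduces the claim to a classical Poincaré estimate on the fixed piecewise-Lipschitz set $U'\cap\eps_0\Ystiff$ (whose closure is path-connected since neighbouring stiff squares meet at corners), which is supplied by Lemma~\ref{lem:poincare}. Hence we may assume $\eps_j\to 0$.

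Because $p>2$, Sobolev embedding makes each $u_j$ continuous on $\overline U$, so the restriction $u_j|_{U\cap\eps_j\Ystiff}$ lies in the admissible class of Lemma~\ref{lem:extension2}(a). Define $\bar u_j:=L(u_j|_{U\cap\eps_j\Ystiff})\in W^{1,p}(U';\R^2)$, which coincides with $u_j$ on $U'\cap\eps_j\Ystiff$. The cellwise construction underlying Lemma~\ref{lem:extension2} (reflection/trace-lifting on $Z$ followed by a scaling argument in the spirit of~\cite{ACDP92}) yields the two separate, $\eps$-independent estimates
\begin{equation*}
\|\nabla \bar u_j\|_{L^p(U';\R^{2\times 2})}\leq C\,\|\nabla u_j\|_{L^p(U\cap\eps_j\Ystiff;\R^{2\times 2})},\qquad
\|\bar u_j\|_{L^p(U';\R^2)}\leq C\,\|u_j\|_{L^p(U\cap\eps_j\Ystiff;\R^2)}.
\end{equation*}
Hence $(\bar u_j)_j$ is bounded in $W^{1,p}(U';\R^2)$, while $\|\nabla\bar u_j\|_{L^p(U';\R^{2\times 2})}\to 0$. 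By Rellich--Kondrachov, a (non-relabeled) subsequence converges strongly in $L^p(U';\R^2)$ and weakly in $W^{1,p}(U';\R^2)$ to some $\bar u$; weak lower semicontinuity forces $\nabla\bar u=0$, so $\bar u\equiv d\in\R^2$ is constant.

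To identify $d$, rewrite the mean-value condition \eqref{mean_value_rig} as
\begin{equation*}
0=\int_{U'\cap\eps_j\Ystiff}u_j\dd x=\int_{U'}\bar u_j\,\one_{\eps_j\Ystiff}\dd x.
\end{equation*}
Combining the strong $L^p$-convergence $\bar u_j\to\bar u$ with the Riemann--Lebesgue weak-$\ast$ convergence $\one_{\eps_j\Ystiff}\weakstar|\Ystiff|$ in $L^\infty(U')$ and passing to the limit yields $|\Ystiff|\,|U'|\,d=0$, whence $\bar u\equiv 0$. But then
\begin{equation*}
1=\|u_j\|_{L^p(U'\cap\eps_j\Ystiff;\R^2)}^p=\int_{U'}|\bar u_j|^p\,\one_{\eps_j\Ystiff}\dd x\leq\|\bar u_j\|_{L^p(U';\R^2)}^p\to 0,
\end{equation*}
a contradiction. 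The only delicate point is ensuring that Lemma~\ref{lem:extension2} indeed delivers separate $L^p$- and gradient-bounds with $\eps$-independent constants, rather than merely a combined $W^{1,p}$-estimate; this is where condition \eqref{domain_est} and the cellwise structure of the ACDP-type extension — in which the gradient of $Lu$ on a soft tile is controlled purely by the gradient of $u$ on the neighbouring stiff tiles — become essential, and it is the only step that genuinely uses the hypothesis that the boundary layer norm is controlled by the interior norm.
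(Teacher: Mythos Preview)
Your overall strategy---contradiction, normalize, extend to the solid domain $U'$, obtain a constant limit by compactness, identify the constant via the mean-value condition, and contradict the normalization---is exactly the paper's. The gap is precisely where you flag it: Lemma~\ref{lem:extension2} as stated yields only a combined $W^{1,p}$-bound, and the trace-lifting construction behind it does \emph{not} automatically give the separate gradient estimate $\|\nabla \bar u_j\|_{L^p(U')}\leq C\|\nabla u_j\|_{L^p(U\cap\eps_j\Ystiff)}$. Without that, you cannot conclude $\nabla\bar u_j\to 0$, and the argument stalls. Your closing remark that ``the gradient of $Lu$ on a soft tile is controlled purely by the gradient of $u$ on the neighbouring stiff tiles'' is exactly the claim that needs proof.

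The paper does not use Lemma~\ref{lem:extension2} here; it proves and uses a different extension result, Lemma~\ref{lem:extension}, built from cellwise reflections (which do preserve $L^p$- and gradient-norms separately) followed by the subtract-the-mean trick of~\cite{ACDP92} together with the cell-scale Poincar\'e inequality of Lemma~\ref{lem:poincare} on $\Zstiff$. Reflections fail near the checkerboard corners, so Lemma~\ref{lem:extension} only extends to $U'\setminus\eps B_r$; consequently the paper's Steps~2a--2b must carry error terms from the excluded balls $\eps B_r$ and an auxiliary domain $U''\Subset U'$, sending $r$ and $|U'\setminus U''|$ to zero at the end.

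Your route would in fact be cleaner once the gap is filled: apply the subtract-the-mean improvement directly to the cellwise trace-lifting operator $L^{(1)}$ of Lemma~\ref{lem:extension2}, invoking Lemma~\ref{lem:poincare} on $\Zstiff$ (legitimate since $p>2$ gives continuity). This produces the separate gradient bound \emph{without} excluding corner balls, after which your Riemann--Lebesgue passage to the limit goes through verbatim and bypasses the $B_r$-bookkeeping entirely. As written, though, the proposal asserts the key estimate rather than establishing it.
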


Before we prove this result, we first cover an alternative auxiliary extension-type result, in which the deformations and the gradients can be estimated separately, cf.~\cite{ACDP92}.

\begin{lemma}[Approximate extension result for checkerboard structures]\label{lem:extension}
	Let $p>2$, $U'\Subset U\subset \R^2$ be a bounded open sets and $\eps>0$ sufficiently small.
	Moreover, let
	\begin{align}\label{B_r}
		B_r:=\bigcup_{e\in I} Y\cap(e + B(0,r))\text{ with } I=\{0,\lambda,1\}^2\text{ and } r<\frac{1}{4}\min\{\lambda,1-\lambda\},
	\end{align}
	as well as its $Y$-periodic extension.
	There exists a constant $C>0$ independent of $\eps, U, U'$ and a linear and continuous operator 
	$L_r: W^{1,p}(U\cap\eYstiff;\R^2)\cap C^0(\overline{U\cap\eYstiff};\R^2) \to W^{1,p}(U';\R^2)$ such that $L_r u = u$ a.e.~in $U'\cap \eYstiff\setminus \eps B_r$ and
	\begin{align*}
		\norm{L_r u}_{L^p(U';\R^2)} &\leq C\norm{u}_{L^p(U\cap \eYstiff;\R^2)},\\
		\norm{\nabla(L_r u)}_{L^p(U';\R^{2\times 2})} &\leq C\norm{\nabla u}_{L^p(U\cap \eYstiff;\R^{2\times 2})}
	\end{align*}
	for every $u\in W^{1,p}(U\cap\eYstiff;\R^2)\cap C^0(\overline{U\cap\eYstiff};\R^2)$.
\end{lemma}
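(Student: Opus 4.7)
My strategy is to adapt the three-step structure of the proof of Lemma~\ref{lem:extension2}, replacing the trace-inverse construction on each soft tile by a reflection-based one, which is the classical device (going back to~\cite{ACDP92}) for obtaining separate $L^p$ and gradient bounds. The removal of the joint balls $\eps B_r$ is precisely what enables reflections: it sidesteps the non-Lipschitz corners of $\Ystiff$ at the nine joint points in $I$.

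Step 1 (local operator). On the enlarged reference cell $Z$ from~\eqref{cellZ}, I would first construct a linear operator $L^{(1)}_r: W^{1,p}(\Zstiff;\R^2)\cap C^0(\overline{\Zstiff};\R^2) \to W^{1,p}(Y;\R^2)$ with $L^{(1)}_r u = u$ on $\Ystiff \setminus B_r$ and with the two separate estimates
\begin{align*}
\norm{L^{(1)}_r u}_{L^p(Y;\R^2)} &\leq C\norm{u}_{L^p(\Zstiff;\R^2)},\\
\norm{\nabla L^{(1)}_r u}_{L^p(Y;\R^{2\times 2})} &\leq C\norm{\nabla u}_{L^p(\Zstiff;\R^{2\times 2})}.
\end{align*}
On each soft tile $Y_i$, $i\in\{2,4\}$, I extend $u$ by reflection across the shared edges with the adjacent stiff tiles in $\Zstiff$, iterating the reflection when a single one does not cover $Y_i$ (relevant for $\lambda\neq \tfrac{1}{2}$). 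Each reflection is a rigid motion, so the corresponding local extension of $u$ inherits the required separate bounds from $u$ on the reflected tile. The reflections coming from different neighbors of $Y_i$ disagree inside $Y_i$ but coincide at every joint $k\in I$ contained in $\partial Y_i$. To glue them into one Sobolev function I use a smooth partition of unity $\{\chi_k\}_{k\in I}$ with $\chi_k$ supported in $Y\cap B(k,r/2)$, together with a complementary cutoff equal to $1$ outside $B_r$; by construction $L^{(1)}_r u = u$ on $\Ystiff \setminus B_r$. The only delicate term in the gradient estimate is $\nabla\chi_k\cdot(u_1-u_2)$, with two reflections $u_1, u_2$ that agree at $k$. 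Because $p>2$, Morrey's embedding applied to $u$ on the stiff tile containing $k$ gives the H\"older bound $|u_1(y)-u_2(y)| \leq C|y-k|^{1-2/p}\norm{\nabla u}_{L^p(\Zstiff;\R^{2\times 2})}$, and together with $|\nabla\chi_k|\leq C/r$, integration over $B(k,r)$ yields exactly $C\norm{\nabla u}_{L^p(\Zstiff;\R^{2\times 2})}$, absorbing the critical factor $r^{-1}$.

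Step 2 (global operator and scaling). Following Step~2 in the proof of Lemma~\ref{lem:extension2}, I would paste $L^{(1)}_r$ cell by cell: for each relevant $k\in \Z^2$, translate $u$ to the reference configuration, apply $L^{(1)}_r$, and translate back. Because $L^{(1)}_r u = u$ on $\Ystiff\setminus B_r$, in particular on every stiff boundary shared between two neighboring cells away from the joints, the cell-wise extensions agree on their common boundaries and the result is a well-defined $W^{1,p}$ function, with the global bounds obtained by summing the per-cell estimates. A scaling $x\mapsto \eps x$, exactly as in the first step of the proof of \cite[Theorem~2.1]{ACDP92}, then produces the $\eps$-dependent operator $L_r$ with constants independent of $\eps$, $U$, and $U'$.

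I expect the hardest part of this plan to be Step~1, specifically absorbing the cutoff contribution $\nabla\chi_k\cdot(u_1-u_2)$ into $\norm{\nabla u}_{L^p}$ without also generating a $\norm{u}_{L^p}$ term. The mechanism that makes this possible is the H\"older continuity provided by Morrey's embedding for $p>2$ combined with the vanishing of $u_1-u_2$ at the joint $k$, which is also the structural reason the argument presented here requires $p>2$.
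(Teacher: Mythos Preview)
Your strategy differs from the paper's, and as written Step~1 has a gap. The partition of unity you describe---cutoffs $\chi_k$ supported in the small balls $B(k,r/2)$ plus a complementary cutoff---cannot glue the four edge-reflections into a single Sobolev function on the soft tile, because the blending of those reflections must happen \emph{throughout} $Y_2$, not only near its corners. Concretely, if $V_1,\dots,V_4$ are neighborhoods of the four edges of $Y_2$ covering $\overline{Y_2}$, then following the horizontal midline from $\Gamma_1$ to $\Gamma_3$ forces a transition between at least two of the $V_i$ at a point of distance $\geq \tfrac12\min\{\lambda,1-\lambda\}>r$ from every corner; hence the pairwise overlaps cannot all lie in $B_r$. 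At such interior transition zones there is no nearby joint $k$ with $u_1(k)=u_2(k)$, so your Morrey estimate $|u_1(y)-u_2(y)|\le C|y-k|^{1-2/p}\|\nabla u\|_{L^p}$ is not available, and the cross term $(\nabla\psi_i)(u_i-u_j)$ is a~priori only controlled by the full $W^{1,p}$-norm of $u$ on $\Zstiff$, not by $\|\nabla u\|_{L^p(\Zstiff)}$ alone. (Your Step~2 also leaves unaddressed the soft segments of the shared cell boundaries; this is a minor point that resolves once the local extension is arranged to equal $u$ on all of $\partial Y_2$.)

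The paper proceeds differently. It uses reflections with a standard partition of unity on $\overline{\Ysoft}$ to extend only into $\Ysoft\setminus B_r$, accepting the coupled bound $\|\nabla(L^{(1)}_r u)\|_{L^p}\le C\|u\|_{W^{1,p}(\Zstiff)}$; it then upgrades this to a pure gradient bound via the mean-subtraction trick $L^{(2)}_r u = L^{(1)}_r\bigl(u-(u)_{\Zstiff}\bigr)+(u)_{\Zstiff}$ combined with the Poincar\'e inequality of Lemma~\ref{lem:poincare} on the disconnected set $\Zstiff$ (whose closure is path-connected); after the cell-wise and scaling steps this yields $\hat L_r$, which is finally restricted to $\eps(Y\setminus B_r)$ and composed with the classical extension \cite[Theorem~2.1]{ACDP92} to fill the periodic Lipschitz holes $\eps B_r$. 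Your Morrey idea can in fact be salvaged: since $\overline{\Zstiff}$ is path-connected and $u\in C^0(\overline{\Zstiff})$, chaining H\"older bounds through the joints gives a \emph{global} pointwise estimate $|u_i(y)-u_j(y)|\le C\|\nabla u\|_{L^p(\Zstiff)}$ valid everywhere in $Y_2$, which would handle an arbitrary partition of unity directly and bypass both Lemma~\ref{lem:poincare} and the second ACDP step---but that is a different argument from the one you wrote.
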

\begin{proof}
	This proof is subdivided into two main arguments. For $V\subset\R^2$ with $U'\Subset V\Subset U$ we first find an operator 
	$\hat{L}_r: W^{1,p}(U\cap\eYstiff;\R^2)\cap C^0(\overline{U\cap\eYstiff};\R^2) \to W^{1,p}(V\setminus (\eps B_r\cap \eYsoft);\R^2)$ such that $\hat{L}_r u = u$ a.e.~in $V\cap \eYstiff$ and
	\begin{align*}
		\norm{\hat{L}_r u}_{L^p(V\setminus (\eYsoft\cap \eps B_r);\R^2)} &\leq C\norm{u}_{L^p(U\cap \eYstiff;\R^2)},\\
		\norm{\nabla(\hat{L}_r u)}_{L^p(V\setminus (\eYsoft \cap \eps B_r);\R^{2\times 2})} &\leq C\norm{\nabla u}_{L^p(U\cap \eYstiff;\R^{2\times 2})}
	\end{align*}
	for every $u\in W^{1,p}(U\cap\eYstiff;\R^2)\cap C^0(\overline{U\cap\eYstiff};\R^2)$.
	
	For such functions $u$, we then define the desired operator as
	$$L_r(u):= \tilde{L}_r\big(\hat{L}_r(u)\restrict{V\cap \eps (Y\setminus B_r)}\big)$$
	where the linear and continuous map $\tilde{L}_r: W^{1,p}(V\cap \eps (Y\setminus B_r);\R^2) \to W^{1,p}(U';\R^2)$ is taken as in \cite[Theorem 2.1]{ACDP92} for $E=Y\setminus B_r$. 
	We shall now detail the construction of $\hat{L}$.\medskip
	
	\textit{Step 1: A preliminary construction on the first unit cell.}
	Let $Z$ be as in \eqref{cellZ}, and recall the analogous definition of $\Zstiff$.
	Moreover, we define $\Ysoftr:=\Ysoft\setminus B_r$.
	The next task is to choose suitable sets $V_1,\ldots,V_N$ for $N\geq 8$ that cover the compact set $\overline{\Ysoft}$ in a particular way. 
	Each of these sets shall overlap with at most one of the eight straight components of $\partial \Ysoft$, see e.g., Figure \ref{fig:cover_Ysoft}.
	\begin{figure}
		\centering
		\begin{tikzpicture}[scale=2]
			\def\length{.6}
			\pgfmathsetmacro\lengthb{1-\length}
			\begin{scope}
				\clip (0,\length) rectangle (\length,1);
	  			\fill [red](\length,\length) circle (0.1);
	  			\fill [red](\length,1) circle (0.1);
	  			\fill [red](0,1) circle (0.1);
	  			\fill [red](0,\length) circle (0.1);
	  		\end{scope}
	  		\begin{scope}
				\clip (\length,0) rectangle (1,\length);
	  			\fill [red](\length,\length) circle (0.1);
	  			\fill [red](\length,0) circle (0.1);
	  			\fill [red](1,\length) circle (0.1);
	  			\fill [red](1,0) circle (0.1);
	  		\end{scope}

			\draw (0,0) rectangle (1,1);
			\draw [fill=black!25!white] (\length,\length) rectangle (1,1);
			\draw [fill=black!25!white] (0,0) rectangle (\length,\length);
			\draw [fill=black!25!white] ($(\length,\length)-(1,0)$) rectangle (0,1);
			\draw [fill=black!25!white] ($(\length,\length)-(0,1)$) rectangle (1,0);
			\draw [fill=black!25!white] (0,1) rectangle (\length,\length+1);
			\draw [fill=black!25!white] (1,0) rectangle (\length+1,\length);

	  		\draw (\length/2,\length) ellipse (\length/2 and \length/2);
	  		\draw ($(0,\length)+(0,\lengthb/2)$) ellipse (\lengthb/2 and \lengthb/2);
	  		\draw (\length/2,1) ellipse (\length/2 and \length/2);
	  		\draw ($(\length,\length)+(0,\lengthb/2)$) ellipse (\lengthb/2 and \lengthb/2);
	  
	  		\draw (\length,\length/2) ellipse (\length/2 and \length/2);
	  		\draw ($(\length,0)+(\lengthb/2,0)$) ellipse (\lengthb/2 and \lengthb/2);
	  		\draw (1,\length/2) ellipse (\length/2 and \length/2);
	  		\draw ($(\length,\length)+(\lengthb/2,0)$) ellipse (\lengthb/2 and \lengthb/2);
	  		
			\draw [thick,blue] (0,0) rectangle (1,1);
			\draw (1,1) node [blue, anchor=south west] {$Y$};
		\end{tikzpicture}
		\caption{The unit cell $Y$ (indicated in blue) and its immediate neighboring stiff components. The eight ellipses cover the soft part $\Ysoft$ in such a way that they overlap with exactly one straight piece of $\partial \Ysoft$. The red quarter-circles describe the set $\Ysoft\cap B_r$ in the unit cell with $B_r$ as in \eqref{B_r}.}\label{fig:cover_Ysoft}
	\end{figure}
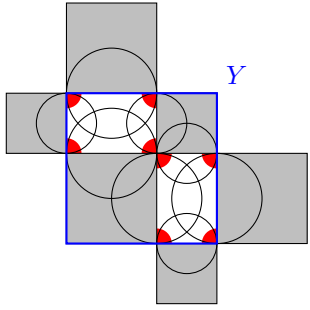	
	Then, there exists a partition of unity $(\ffi_i)_i$ with $\ffi_i\in C_c^{\infty}(V_i;\R^2)$ for $i=1,\ldots,N$ and $\sum_{i=1}^N\ffi_i = 1$ on $\bigcup_{i=1}^NV_i$.
	With standard mirroring techniques (cf. \cite[Chapter 5]{AdF03}) performed on each of the sets $V_1,\ldots, V_N$, we can thus find a linear and continuous operator $L_r^{(1)}: W^{1,p}(\Zstiff;\R^2)\cap C^0(\overline{\Zstiff};\R^2)\to W^{1,p}(\Zstiff\cup\Ysoftr;\R^2)$ such that
	\begin{align}
		L_r^{(1)} u &= u\quad \text{a.e.~in }\Zstiff\nonumber\\
		\norm{L_r^{(1)} u}_{L^p(\Zstiff\cup\Ysoftr;\R^2)} &\leq C(\lambda,p,r)\norm{u}_{L^p(\Zstiff;\R^2)}\nonumber\\
		\norm{\nabla(L_r^{(1)} u)}_{L^p(\Zstiff\cup\Ysoftr;\R^{2\times 2})} &\leq C(\lambda,p,r)\norm{u}_{W^{1,p}(\Zstiff;\R^{2\times 2})}\label{bad_ext}
	\end{align}
	for every $u\in W^{1,p}(\Zstiff;\R^2)\cap C^0(\overline{\Zstiff};\R^2)$.
	We shall point out that not all of $\Zstiff$ is needed to obtain an extension on $\Ysoftr$ but it keeps the notation easier later on.	
	
	\medskip
	
	\textit{Step 2: Improvement on the first unit cell.} 
	The next objective is to construct another extension operator that allows better estimates for the gradients than in \eqref{bad_ext}.
	This step can be handled similarly to the proof of \cite[Lemma 2.6]{ACDP92}.
	We define $L_r^{(2)}: W^{1,p}(\Zstiff;\R^2)\cap C^0(\overline{\Zstiff};\R^2)\to W^{1,p}(\Zstiff\cup\Ysoftr;\R^2)$ by setting
	\begin{align*}
		L_r^{(2)} u := L_r^{(1)}\big(u - (u)_{\Zstiff}\big) + (u)_{\Zstiff},\quad u\in W^{1,p}(\Zstiff;\R^2)\cap C^0(\overline{\Zstiff};\R^2)
	\end{align*}
	with $(u)_{\Zstiff}=\int_{\Zstiff}u\dd x$ and $L_r^{(1)}$ as in Step 1. 
	Let $u\in W^{1,p}(\Zstiff;\R^2)\cap C^0(\overline{\Zstiff};\R^2)$ be arbitrary. Since $L_r^{(1)}$ is a linear extension operator, it holds that $L_r^{(2)}u=u$ on $\Zstiff$.
	As for the $L^p$-estimate of the gradient, we use the properties of $L_r^{(1)}$, and invoke Lemma \ref{lem:poincare} for $U=\Zstiff$ to compute that
	\begin{align*}
		\int_{\Zstiff\cup\Ysoftr}|\nabla(L_r^{(1)}u)|^p &= \int_{\Zstiff\cup\Ysoftr}|\nabla\big(L_r^{(1)}(u-(u)_{\Zstiff})\big)|^p \dd x \\
		&\leq C(\lambda,p,r)\Big(\int_{\Zstiff}|u-(u)_{\Zstiff}|^p \dd x + \int_{\Zstiff}|\nabla u|^p \dd x\Big)\\
		&\leq C(\lambda,p,r)\int_{\Zstiff}|\nabla u|^p \dd x,
	\end{align*}
	where the constant $C(\lambda,p,r)$ may change from line to line.
	The estimate
	\begin{align*}
		\int_{\Zstiff\cup\Ysoftr}|L_r^{(1)}u|^p \leq C(\lambda,p,r)\int_{\Zstiff}|u|^p \dd x
	\end{align*}
	can be acquired exactly as in the proof of \cite[Lemma 2.6]{ACDP92}.
	
	\medskip
	
	\textit{Step 3: Extension on large domains and scaling analysis.} The rest of the proof can be executed exactly as in the Steps 2 \& 3 of the proof of Lemma \ref{lem:extension2}.		
\end{proof}

We are now positioned to prove the Poincar\'e-type estimate in Lemma \ref{lem:poincare2}.

\begin{proof}[Proof of Lemma \ref{lem:poincare2}]
	We argue via contradiction and suppose that for every $j\in\N$ there exists $u_j:=u_{\eps_j}\in W^{1,p}(U;\R^2)$ satisfying \eqref{mean_value_rig} and \eqref{domain_est} for $\eps=\eps_j$, and
	\begin{align}\label{est_contra}
		\norm{u_j}_{L^p(U'\cap\jYstiff;\R^2)}>j\norm{\nabla u_j}_{L^p(U\cap\jYstiff;\R^{2\times 2})}.
	\end{align}\smallskip
	
	\textit{Step 1: Rescaling and extending $u_j$.} We now normalize $u_j$ by introducing
	\begin{align}\label{normalization}
		v_j:= \norm{u_j}_{L^p(U'\cap\jYstiff;\R^2)}^{-1}u_j\quad \text{on }U,
	\end{align}
	and observe that $v_j$ still satisfies \eqref{mean_value_rig} for all $j\in\N$. Moreover, it holds that
	\begin{align}\label{v_j_est}
		\norm{v_j}_{L^p(U\cap\jYstiff;\R^2)} \leq M\qand \norm{\nabla v_j}_{L^p(U\cap\jYstiff;\R^{2\times 2})}< \frac{1}{j}
	\end{align}
	due to \eqref{domain_est} and \eqref{est_contra}.
	
	Now, choose $U''\Subset U'\Subset U$, where the set $U''$ is to be specified later, and apply Lemma \ref{lem:extension} for the pair of sets $U''\Subset U$ and sufficiently small $\eps$. 
	This way, we find $\bar{v}_j:=L_{r}v_j\in W^{1,p}(U'';\R^2)$ with the properties
	\begin{align}\label{bar_v_j}
		\begin{split}
		\bar{v}_j = v_j &\text{ a.e.~in } U''\cap\jYstiff\setminus \eps_j B_r,\\
		\norm{\bar{v}_j}_{L^p(U'';\R^2)} &\leq C(\lambda,p,r)\norm{v_j}_{L^p(U \cap \jYstiff;\R^2)},\\
		\norm{\nabla\bar{v}_j}_{L^p(U'';\R^{2\times 2})} &\leq C(\lambda,p,r)\norm{\nabla v_j}_{L^p(U\cap\jYstiff ;\R^{2\times 2})}.
		\end{split}
	\end{align}
	Combining \eqref{v_j_est} with \eqref{bar_v_j} then produces
	\begin{align}\label{bar_v_j2}
		\norm{\bar{v}_j}_{L^p(U'';\R^2)}\leq C(\lambda,p,r)M&\qand\norm{\nabla\bar{v}_j}_{L^p(U'';\R^{2\times 2})} < C(\lambda,p,r)\frac{1}{j}.
	\end{align}
	
	\smallskip
	
	\textit{Step 2: Asymptotic behavior of $(\bar{v}_j)_j$.}
	In view of \eqref{bar_v_j2}, we find that there exists a (non-relabeled) subsequence of $(\bar{v}_j)_j$ such that
	\begin{align}\label{bar_v_j_convergence}
		\bar{v}_j \to d \text{ in }W^{1,p}(U'';\R^2) \qand \bar{v}_j \to d \text{ in }C^0(U'';\R^2).
	\end{align}
	for some constant vector $d\in\R^2$. In this final step, we prove that $|d|$ is close to $1$ and close to $0$ if $|U'\setminus U''|$ and $r$ are sufficiently small, which is a contradiction.
	
	\textit{Step 2a: The length $|d|$ is small.}
	First, we exploit \eqref{mean_value_rig} to compute that
	\begin{align}
		0 &= \int_{U'\cap\jYstiff} v_j \dd x = \int_{U''\cap\jYstiff} v_j \dd x + \int_{(U'\setminus U'') \cap\jYstiff} v_j \dd x\nonumber\\
		&=\int_{U''\cap\jYstiff\setminus \eps_j B_r} \bar{v}_j \dd x + \int_{U''\cap\jYstiff\cap \eps_j B_r} v_j \dd x + \int_{(U'\setminus U'') \cap\jYstiff} v_j \dd x\nonumber\\
		&=\int_{U''\cap\jYstiff} \bar{v}_j \dd x + \int_{U''\cap\jYstiff\cap \eps_j B_r} v_j-\bar{v}_j \dd x + \int_{(U'\setminus U'') \cap\jYstiff} v_j \dd x.\label{three_terms}
	\end{align}
	The first term in \eqref{three_terms} converges to $|\Ystiff||U''|d$ due to \eqref{bar_v_j_convergence} as $j\to\infty$.
	We handle the third term in \eqref{three_terms} via H\"older's inequality and \eqref{v_j_est},
	\begin{align}\label{third}
		\Big|\int_{(U'\setminus U'') \cap\jYstiff} v_j \dd x\Big| \leq \norm{v_j}_{L^p(U\cap\jYstiff;\R^2)}|U'\setminus U''|^{1-\frac{1}{p}} \leq M |U'\setminus U''|^{1-\frac{1}{p}}
	\end{align}
	The second term in \eqref{three_terms} can be similarly estimated:
	\begin{align}\label{second}
		\Big|\int_{U''\cap\jYstiff\cap \eps_j B_r} v_j-\bar{v}_j \dd x \Big| &\leq \int_{U''}|v_j|\one_{\jYstiff}\one_{\eps_jB_r} \dd x + \int_{U''} |\bar{v}_j| \one_{\eps_j B_r} \dd x\nonumber\\
		&\leq M|U''\cap \eps_j B_r|^{1-\frac{1}{p}} + \int_{U''} |\bar{v}_j| \one_{\eps_j B_r} \dd x.
	\end{align}
	We shall also point out that $|U''\cap \eps_j B_r|\to |U''|\pi r^2$ and $\int_{U''} |\bar{v}_j| \one_{\eps_j B_r} \dd x \to |d||U''|\pi r^2$ as $j\to \infty$.
	
	In conclusion, we first select for arbitrary $\delta>0$ the set $U''\Subset U'$ large enough that \eqref{third} is smaller than $\delta$.
	Then, we select $r$ small enough and subsequently $j$ sufficiently large in such a way that \eqref{second} is also bounded by $\delta$, and that
	\begin{align*}
		\Big| \int_{U''\cap\jYstiff} \bar{v}_j \dd x - |\Ystiff||U''|d\Big| \leq \delta
	\end{align*}		
	for all such $j$.
	These choices then produce
	\begin{align}\label{close_to_0}
		|\Ystiff||U''||d| \leq 3\delta.
	\end{align}
	
	\textit{Step 2b: Bound of $|d|$ from below.}
	The convergence \eqref{bar_v_j_convergence} yields that
	\begin{align*}
		\norm{\bar{v}_j}_{L^1(U'';\R^2)}\to |U''||d|\quad \text{ as } j\to \infty.
	\end{align*}
	On the other hand, this norm can be estimated from below by exploiting \eqref{bar_v_j2} and \eqref{normalization}:
	\begin{align}\label{two_terms}
		\norm{\bar{v}_j}_{L^1(U'';\R^2)} &\geq \norm{\bar{v}_j}_{L^1(U''\cap \jYstiff \setminus \eps_j B_r;\R^2)} = \norm{v_j}_{L^1(U''\cap \jYstiff \setminus \eps_j B_r;\R^2)}\nonumber\\
		&= \norm{v_j}_{L^1(U'\cap \jYstiff \setminus \eps_j B_r;\R^2)} - \norm{v_j}_{L^1((U'\setminus U'')\cap \jYstiff \setminus \eps_j B_r;\R^2)} \nonumber\\
		&= 1- \norm{v_j}_{L^1(U'\cap \jYstiff \cap \eps_j B_r;\R^2)} - \norm{v_j}_{L^1((U'\setminus U'')\cap \jYstiff \setminus \eps_j B_r;\R^2)} \nonumber\\
		&= 1- \norm{v_j}_{L^1(U''\cap \jYstiff \cap \eps_j B_r;\R^2)} - \norm{v_j}_{L^1((U'\setminus U'')\cap \jYstiff;\R^2)}.
	\end{align}
	The last two terms in \eqref{two_terms} are handled via \eqref{v_j_est} and H\"older's inequality, namely
	\begin{align*}
		\norm{v_j}_{L^1(U''\cap \jYstiff \cap \eps_j B_r;\R^2)} \leq M |U''\cap \eps_j B_r|^{1-\frac{1}{p}},
	\end{align*}
	and
	\begin{align*}
		\norm{v_j}_{L^1((U'\setminus U'')\cap \jYstiff;\R^2)} \leq M |U'\setminus U''|^{1-\frac{1}{p}}.
	\end{align*}
	Similarly to Step 2a, we then find that $|d-1|$ is very small which is a contradiction to \eqref{close_to_0}. This concludes the proof of this lemma.
\end{proof}

Finally, we state a brief technical Lemma which ensures that \eqref{domain_est} is satisfied for a suitable sequence later on.
\begin{lemma}\label{lem:complement}
	Let $p>2$, and $U\subset \R^2$ be a bounded Lipschitz domain and let $(v_\eps)_\eps\subset W^{1,p}(U;\R^2)$ be bounded and satisfy $\norm{v_\eps}_{L^p(U\cap\eYstiff;\R^2)}\geq a > 0$.
	Then, there exist $U'\Subset U$ and $M>0$ such that
	\begin{align*}
		\norm{v_\eps}_{L^p(U\cap\eYstiff;\R^2)} \leq M \norm{v_\eps}_{L^p(U'\cap\eYstiff;\R^2)}
	\end{align*}
	for every $\eps>0$.
\end{lemma}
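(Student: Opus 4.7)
The plan is to argue by contradiction, exploiting the compact embedding $W^{1,p}(U;\R^2)\hookrightarrow L^p(U;\R^2)$ together with the Riemann–Lebesgue weak-$*$ convergence $\one_{\eps\Ystiff}\weaklystar|\Ystiff|$ in $L^\infty(U)$.

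First, I would fix an exhausting sequence of Lipschitz subdomains $U_n'\Subset U$ with $|U\setminus U_n'|\to 0$, e.g.\ $U_n'=\{x\in U:\dist(x,\partial U)>1/n\}$ for large $n$. Suppose the conclusion fails. Then, by a standard diagonal extraction (after verifying that the relevant indices cannot remain bounded, since for any fixed $\eps$ one has $\norm{v_\eps}_{L^p(U_n'\cap\eps\Ystiff)}\to\norm{v_\eps}_{L^p(U\cap\eps\Ystiff)}\geq a$ as $n\to\infty$), one obtains a subsequence $(v_{\eps_n})_n$ with $\eps_n\to 0$ such that
\begin{align*}
\norm{v_{\eps_n}}_{L^p(U\cap\eps_n\Ystiff;\R^2)} > n\,\norm{v_{\eps_n}}_{L^p(U_n'\cap\eps_n\Ystiff;\R^2)}.
\end{align*}
Setting $w_n:=v_{\eps_n}/\norm{v_{\eps_n}}_{L^p(U\cap\eps_n\Ystiff;\R^2)}$, the lower bound $\norm{v_{\eps_n}}_{L^p(U\cap\eps_n\Ystiff)}\geq a$ combined with the boundedness of $(v_\eps)_\eps$ in $W^{1,p}(U;\R^2)$ yields that $(w_n)_n$ is bounded in $W^{1,p}(U;\R^2)$, and by construction
\begin{align*}
\int_U|w_n|^p\,\one_{\eps_n\Ystiff}\dd x=1\qand\int_U|w_n|^p\,\one_{U_n'}\,\one_{\eps_n\Ystiff}\dd x<\frac{1}{n^p}.
\end{align*}

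Next, by Rellich–Kondrachov, a further subsequence satisfies $w_n\to w$ strongly in $L^p(U;\R^2)$ for some $w\in W^{1,p}(U;\R^2)$. The elementary bound $\bigl||w_n|^p-|w|^p\bigr|\leq C(|w_n|^{p-1}+|w|^{p-1})|w_n-w|$, combined with H\"older's inequality, gives $|w_n|^p\to|w|^p$ in $L^1(U)$. Pairing this strongly convergent $L^1$-sequence with the weakly-$*$ convergent $L^\infty$-sequence $\one_{\eps_n\Ystiff}\weaklystar|\Ystiff|$ yields
\begin{align*}
1=\int_U|w_n|^p\,\one_{\eps_n\Ystiff}\dd x\xrightarrow{n\to\infty}|\Ystiff|\int_U|w|^p\dd x,
\end{align*}
so that $\int_U|w|^p\dd x=1/|\Ystiff|$.

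On the other hand, decomposing $\one_{\eps_n\Ystiff}=\one_{U_n'}\one_{\eps_n\Ystiff}+\one_{U\setminus U_n'}\one_{\eps_n\Ystiff}$, the first contribution tends to $0$ by hypothesis (it is bounded by $n^{-p}$), and the second is controlled by
\begin{align*}
\int_{U\setminus U_n'}|w_n|^p\dd x\leq\norm{|w_n|^p-|w|^p}_{L^1(U)}+\int_{U\setminus U_n'}|w|^p\dd x\xrightarrow{n\to\infty}0,
\end{align*}
using $L^1$-convergence and $|U\setminus U_n'|\to 0$. Together these give $1\to 0$, the desired contradiction. The only delicate point is the initial extraction ensuring $\eps_n\to 0$ along the selected subsequence; beyond that, the argument is a direct application of compactness and the homogenization of characteristic functions.
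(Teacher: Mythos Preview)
Your argument is correct, but it proceeds differently from the paper. The paper exploits the hypothesis $p>2$ directly: by Morrey's embedding the family $(v_\eps)_\eps$ is uniformly bounded in $C^0(\overline{U};\R^2)$, so one can choose $U'\Subset U$ with $|U\setminus U'|$ so small that $\norm{v_\eps}_{L^p((U\setminus U')\cap\eps\Ystiff)}\leq a/2$ uniformly in $\eps$; the desired inequality then follows by an elementary algebraic manipulation. Your route instead normalizes, passes to a strongly $L^p$-convergent subsequence via Rellich--Kondrachov, and pairs the resulting $L^1$-convergence of $|w_n|^p$ with the weak-$*$ convergence $\one_{\eps_n\Ystiff}\weaklystar|\Ystiff|$ to produce the contradiction $1\to 0$. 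The paper's proof is shorter and more direct, while yours has the advantage of not using the $C^0$-embedding at all---in fact your argument goes through verbatim for any $p\geq 1$, so it shows that the restriction $p>2$ in the lemma is not essential. The diagonal extraction step you flag as ``delicate'' is indeed the only place requiring care, and your parenthetical observation (that for each fixed $\eps$ the ratio eventually drops below any given $n$) correctly rules out a subsequence of $\eps_n$ bounded away from zero.
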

\begin{proof}
	Since the family $(v_\eps)_\eps$ is bounded in $W^{1,p}(U;\R^2)$, we find via Sobolev-embeddings that $(v_\eps)_\eps$ is bounded in $C^0(\overline{U};\R^2)$.
	With $U_j:=\{x\in U: \dist(x,\partial U) > \frac{1}{j}\}$, we estimate 
	\begin{align*}
		\int_{(U\setminus U_j)\cap\eYstiff}|v_\eps|^p \dd x \leq \norm{v_\eps}^p_{C^0(\overline{U};\R^2)}|(U\setminus U_j)\cap\eYstiff| \leq \norm{v_\eps}^p_{C^0(\overline{U};\R^2)}|U\setminus U_j|;
	\end{align*}
	in particular, we find some $j_0\in\N$ such that we find on $U':=U_{j_0}$ that
	\begin{align*}
		\norm{v_\eps}_{L^p((U\setminus U')\cap\eYstiff;\R^2)}\leq \frac{a}{2}
	\end{align*}
	for all $\eps>0$.
	We argue for the rest of the proof via contradiction. 
	Assume that for the set $U'$ chosen above and every $j\in\N$ there exists $v_j:=v_{\eps_j}$ such that
	\begin{align*}
		\norm{v_j}_{L^p(U\cap\jYstiff;\R^2)}^p > j^p \norm{v_j}_{L^p(U'\cap\jYstiff;\R^2)}^p=j^p\big(\norm{v_j}_{L^p(U\cap\jYstiff;\R^2)}^p - \norm{v_j}^p_{L^p((U\setminus U')\cap\jYstiff;\R^2)}\big).
	\end{align*}
	We then find for every $j>1$ that
	\begin{align*}
		a^p \leq \norm{v_\eps}_{L^p(U\cap\eYstiff;\R^2)}^p \leq \frac{j^p}{j^p-1} \norm{v_\eps}_{L^p((U\setminus U')\cap\eYstiff;\R^2)}^p \leq \frac{j^p}{j^p-1}\frac{a^p}{2^p},
	\end{align*}
	which is a contradiction if $j$ is large enough.
\end{proof}

\subsection{Macroscopic deformation behavior}\label{sec:elastic_deform}
With technical tools and results about the local behavior in place, we are now in a position to derive global effects.
The next theorem serves as the compactness result in Theorem \ref{theo:homogenization_elastic_intro} and is the analogon of Proposition \ref{prop:K_lambda} a) in the rigid case.

\begin{proposition}[Criterion for limit deformations]\label{prop:elastic_compactness}
Let $p>2$, $\beta>2p-2$, and let $(u_\eps)_\eps\subset \Acal$, cf.~\eqref{Acal}, be a sequence that satisfies
\begin{align}\label{elastic_energy}
	\int_{\Omega\cap\eYstiff}\dist^p(\nabla u_\eps, \SO(2))\dd{x}\leq C\eps^\beta
\end{align}
for a constant $C>0$ independent of $\eps$.
If $u_\eps\weakly u$ in $W^{1,p}(\Omega;\R^2)$ for some $u\in W^{1,p}(\Omega;\R^2)$, then $u$ is affine with $\nabla u \in K$ with $K$ as in \eqref{Klambda}.
\end{proposition}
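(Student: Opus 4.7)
The plan is to parallel Proposition~\ref{prop:K_lambda}\,a), with the exact rigidity statements (Reshetnyak plus Corollary~\ref{cor:preservation}) replaced by the quantitative rigidity on cross structures (Lemma~\ref{lem:luftschloss}) and with the Poincar\'e-type inequality (Lemma~\ref{lem:poincare2}) used to convert gradient control on the stiff parts into $L^p$-control on the deformation itself. Fix $\Omega'\Subset\Omega$. For $\eps$ sufficiently small, $\|\dist(\nabla u_\eps,\SO(2))\|_{L^p(\eps(k+E'))}\leq C\eps^{\beta/p}<\delta_0\eps^{2/p}$ on every cross structure $\eps(k+E')\subset\Omega'$, since $\beta>2p-2\geq 2$. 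Applying Lemma~\ref{lem:luftschloss} via Remark~\ref{rem:scaling} cross by cross then yields rotations $S_\eps^k,R_\eps^k\in\SO(2)$ with
\begin{align*}
  \|\nabla u_\eps - S_\eps^k\|_{L^p(\eps(k+Y_1))} + \|\nabla u_\eps - R_\eps^k\|_{L^p(\eps(k+Y_3))} \leq C\eps^{1/p}(\delta_\eps^k)^{1/2},
\end{align*}
where $\delta_\eps^k=\|\dist(\nabla u_\eps,\SO(2))\|_{L^p(\eps(k+E'))}$, together with the asymptotic scalar-product estimate $S_\eps^k e_1\cdot R_\eps^k e_1 \geq -C\eps^{-1/p}(\delta_\eps^k)^{1/2}$, which vanishes uniformly in $k$ as $\eps\to 0$.

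Two adjacent crosses share a stiff square on which both associated rotations must approximate $\nabla u_\eps$, yielding $|S_\eps^k-S_\eps^{k'}|\leq C\eps^{-1/p}((\delta_\eps^k)^{1/2}+(\delta_\eps^{k'})^{1/2})$ and similarly for $R$. A careful H\"older-based assembly of these pairwise bounds against the global energy $\sum_k(\delta_\eps^k)^p\leq C\eps^\beta$, where the regime $\beta>2p-2$ plays a decisive role, shows that the piecewise-constant fields $k\mapsto S_\eps^k$, $k\mapsto R_\eps^k$ converge (along a subsequence) strongly in $L^p(\Omega';\R^{2\times 2})$ to constants $S,R\in\SO(2)$ with $Se_1\cdot Re_1\geq 0$. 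Choose representatives $S_\eps\to S$, $R_\eps\to R$ and construct the continuous piecewise affine $v_\eps$ from $(S_\eps,R_\eps)$ as in~\eqref{nablav_eps}. Set $\varphi_\eps:=u_\eps-v_\eps-c_\eps$ with $c_\eps\in\R^2$ chosen so that $\int_{\Omega'\cap\eYstiff}\varphi_\eps\dd x=0$; the previous estimates then give $\nabla\varphi_\eps\to 0$ strongly in $L^p(\Omega\cap\eYstiff;\R^{2\times 2})$.

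The growth hypothesis \eqref{domain_est} required by Lemma~\ref{lem:poincare2} is supplied by Lemma~\ref{lem:complement} applied to $(\varphi_\eps)_\eps$, whence $\varphi_\eps\to 0$ in $L^p(\Omega'\cap\eYstiff;\R^2)$. A Riemann-Lebesgue computation mirroring \eqref{weaklimit_w} yields $\nabla v_\eps\weakly \lambda S+(1-\lambda)R$ in $L^p(\Omega;\R^{2\times 2})$. Combining these convergences with the given weak limit $u_\eps\weakly u$, uniqueness of weak limits identifies $\nabla u=\lambda S+(1-\lambda)R$ a.e.~on $\Omega'$. As $\Omega'\Subset\Omega$ was arbitrary, the identity extends to all of $\Omega$, so $u$ is affine with $\nabla u\in K$.

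The principal obstacle is the second step. The individual cross errors $C\eps^{1/p}(\delta_\eps^k)^{1/2}$ are small on their own, but there are $O(\eps^{-2})$ crosses contributing in total, and their assembly into a strong $L^p$-statement for the rotation fields (and hence for $\nabla\varphi_\eps$ on the stiff part) demands a tight H\"older summation against $\sum_k(\delta_\eps^k)^p\leq C\eps^\beta$. The loss factor $\eps^{-1/p}$ incurred when converting an $L^p$-norm on a single stiff square to a pointwise rotation comparison is precisely what singles out the scaling threshold $\beta>2p-2$ in this strategy.
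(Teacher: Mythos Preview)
Your proposal is correct and follows essentially the same route as the paper: apply Lemma~\ref{lem:luftschloss} cross by cross to obtain cell-wise rotations, chain neighboring-cell estimates (the paper does this via a telescoping/Fr\'echet--Kolmogorov argument, which is the ``careful H\"older-based assembly'' you allude to and which indeed singles out $\beta>2p-2$) to show the piecewise constant rotation fields converge strongly to constants $S,R\in\SO(2)$, and then use Lemma~\ref{lem:complement} and Lemma~\ref{lem:poincare2} to compare $u_\eps$ with the piecewise affine map built from $S,R$. The only minor deviation is in the last identification step: you rely on compact embedding and uniqueness of weak limits to pass from $L^p$-closeness on the stiff part to the full conclusion, whereas the paper explicitly propagates the $L^p$-closeness from the stiff to the soft tiles by a one-dimensional shift estimate (using $Y_2\subset Y_1+\lambda e_2$ and boundedness of $(\nabla u_\eps)_\eps$); both are valid.
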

\begin{proof}
	Throughout this proof as well as the proof of Theorem \ref{theo:homogenization_elastic_intro}, we work on several nested compactly contained Lipschitz domains $\Omega_i\Subset \Omega_{i-1}$ with $i\in\{0,\ldots,4\}$ and $\Omega_0:=\Omega$.
	For such sets, we define
	\begin{align}\label{index_sets}
		J_{\eps,i} = \{k\in \Z^2 : \Omega_i\cap\eps(k+Y)\neq \emptyset\}
	\end{align}
	and observe that 
	\begin{align*}
		\Omega_i\Subset \bigcup_{k\in J_{\eps,i}}\eps(k+Y)\Subset \Omega_{i-1}
	\end{align*}		
	for sufficiently small $\eps>0$.
	For $k\in J_{\eps,1}$, let $\eps(k+Z)$ be the union of $\eps(k+Y)$ and its eight neighboring cells, see \eqref{cellZ}, and note that $\eps(k+Z) \subset \Omega$ for $\eps$ sufficiently small.
	
	\smallskip
	
	\textit{Step 1: Setup.} Let $\Omega_1\Subset \Omega$ be an arbitrary bounded Lipschitz domain.
	We briefly write 
	\begin{align}\label{cross2}
		\begin{split}
			E_1(\eps,k)&:=\eps(k+Y_1),\ E_2(\eps,k):=\eps(k-e_1+Y_3),\\
			E_3(\eps,k)&:=\eps(k+e_2+Y_1),\ E_4(\eps,k):=\eps(k+Y_3),\\
			E'(\eps,k)&:=\bigcup_{i=1}^4 E_i(\eps,k)
		\end{split}
	\end{align}
	for $k\in \Z^2$ and observe that
	\begin{align}\label{same_squares2}
		E_3(\epsilon,k) = E_1(\eps,k+e_2)\qand E_4(\eps,k) = E_2(\eps,k+e_1)\quad\text{ for all }k\in\Z^2.
	\end{align}
	Since $\beta > 2p-2 > 2$ we find that $\norm{\dist(\nabla u_\eps,\SO(2))}_{L^p(E'(\eps,k))}^p \leq C\eps^\beta \leq \delta_0\eps^{2}$ with $\delta_0$ as in Lemma \ref{lem:luftschloss}, and $k\in \Z^2$ with $\eps(k+Z)\subset \Omega$.
	For all these $k$, we may apply Lemma \ref{lem:luftschloss} and Remark \ref{rem:scaling} for $\mu=\frac{1-\lambda}{\lambda}$, $\rho=\lambda\eps$, and the sets $\rho E_0 = \eps(k+Y_2)$, $\rho E_i = E_i(\eps,k)$ for $i\in\{1,\ldots,4\}$, to obtain two rotations $R_\eps^k,S_\eps^k\in\SO(2)$ such that 
	\begin{align}\label{anw_luftschloss2}
		\begin{split}
			\norm{\nabla u_\eps - S_{\eps}^k}_{L^p(E_1(\eps,k)\cup E_3(\eps,k);\R^{2\times 2})} &+ \norm{\nabla u_\eps - R_{\eps}^k}_{L^p(E_2(\eps,k)\cup E_4(\eps,k);\R^{2\times 2})}\\
			&\qquad\qquad\qquad\leq C\eps^{\frac{1}{p}}\norm{\dist(\nabla u_\eps,\SO(2))}_{L^p(E'(\eps,k))}^{\frac{1}{2}}
		\end{split}
	\end{align}
	and
	\begin{align}\label{scalar_product2}
		S_\eps^k e_1 \cdot R_{\eps}^k e_1 \geq -C \eps^{-\frac{1}{p}}\norm{\dist(\nabla u_\eps,\SO(2))}_{L^p(E'(\eps,k))}^{\frac{1}{2}}
	\end{align} 
	for a constant $C>0$ independent of $\eps$ and $k$.
	On this basis, we define two auxiliary piecewise constant maps $S_\eps, R_\eps :\Omega\to \SO(2)$ as
	\begin{align}\label{Seps_Reps2}
		S_\eps:=\sum_{k\in J_{\eps,1}}S_\eps^k \one_{\eps(k+Y)},\qand R_\eps:=\sum_{k\in J_{\eps,1}}R_\eps^k \one_{\eps(k+Y)}.
	\end{align}
	
	\textit{Step 2: Strong convergence of rotations.} We consider only $(S_\eps)_\eps$, the other sequence can be dealt with analogously.
	Recalling \eqref{cross2}, \eqref{same_squares2}, and \eqref{anw_luftschloss2} we find for $x\in \eps(k+Y)$ and $\tilde{x} \in \eps(k+ e_2 + Y)$ that
	\begin{align*}
		|S_\eps(x) - S_\eps(\tilde{x})|^p &= C\eps^{-2}\norm{S_\eps^k - S_\eps^{k+e_2}}_{L^p(\eps(k+ e_2 + Y_1);\R^{2\times 2})}^p \\
		&\leq C\epsilon^{-2}\big(\norm{S_\eps^k - \nabla u_\eps}_{L^p(E_3(\eps,k);\R^{2\times 2})}^p + \norm{S_\eps^{k+ e_2} - \nabla u_\eps}_{L^p(E_1(\eps,k+e_2);\R^{2\times 2})}^p\big)\\
		&\leq C\epsilon^{-1}\norm{\dist(\nabla u_\eps,\SO(2))}_{L^p(E'(\eps,k)\cup E'(\eps, k+e_2))}^{\frac{p}{2}}.
	\end{align*}
	Analogously (and with an additional triangle inequality), we obtain for any $\tilde{x}\in \eps(k+Z)$ that
	\begin{align}\label{help1}
		|S_\eps(x) - S_\eps(\tilde{x})|^p \leq C\epsilon^{-1}\norm{\dist(\nabla u_\eps,\SO(2))}_{L^p\big(\bigcup_{e\in I} E'(\eps,k + e)\big)}^{\frac{p}{2}}
	\end{align}
	with a constant $C>0$ independent of $\eps$, see \eqref{cellZ} for the definition of $I$.
	We choose now an arbitrary $\xi\in \R^2$ with $|\xi|\leq\frac{1}{2}\dist(\Omega_1,\partial \Omega)$ and we set $m_{\eps}=\lceil \frac{|\xi|_{\infty}}{\eps} \rceil$ with $|\xi|_{\infty}:=\max\{|\xi_1|, |\xi_2|\}$. 
	Now, select $m_{\eps} +1$ points $0=\xi\ui{0}, \xi\ui{1}, \ldots, \xi\ui{m_{\eps}}= \xi$ in such a way that $|\xi\ui{j+1}-\xi\ui{j}|_{\infty} \leq \eps$ for every $j=0, \ldots, m_{\eps}-1$,
	which produces a piecewise straight path from the origin to $\xi$ with maximal step length $\eps$. With the help of a telescoping sum argument and the discrete H\"older's inequality, we obtain
	\begin{align*}
		|S_\eps(x) - S_\eps(x + \xi)|^p \leq m_\eps^{p-1}\sum_{i=0}^{m_\eps-1}|S_\eps(x + \xi\ui{j}) - S_\eps(x + \xi\ui{j+1})|^p.
	\end{align*}
	Integrating this estimate on $\eps(k+Y)$ and combining the result with \eqref{help1} generates
	\begin{align*}
		\int_{\eps(k+Y)}|S_\eps(x) - S_\eps(x + \xi)|^p \dd x \leq C\epsilon m_\eps^{p-1}\sum_{i=0}^{m_\eps -1}\norm{\dist(\nabla u_\eps,\SO(2))}_{L^p\big(\bigcup_{e\in I} E'(\eps,k+\lfloor\xi\ui{j}\rfloor + e)\big)}^{\frac{p}{2}}
	\end{align*}		
	with $\lfloor \zeta \rfloor := (\lfloor \zeta_1\rfloor,\lfloor \zeta_2 \rfloor)$ for $\zeta \in \R^2$.
	Summing over all $k\in J_{\eps,1}$ and considering that $m_\eps \leq C\frac{|\xi|}{\eps} + 1$ as well \eqref{anw_luftschloss2} we derive
	\begin{align*}
		\int_{\Omega_1}|S_\eps(x)& - S_\eps(x +\xi)|^p \dd x \leq C\epsilon m_\eps^{p-1}\sum_{i=0}^{m_\eps -1}\sum_{k\in J_{\eps,1}}\norm{\dist(\nabla u_\eps,\SO(2))}_{L^p\big(\bigcup_{e\in I} E'(\eps,k+\lfloor\xi\ui{j}\rfloor + e)\big)}^{\frac{p}{2}}\nonumber \\
		&\leq C\epsilon m_\eps^p \norm{\dist(\nabla u_\eps,\SO(2))}_{L^p(\Omega\cap\eYstiff)}^{\frac{p}{2}}
		\leq C\epsilon^{1+\frac{\beta}{2}}\Big(\frac{|\xi|}{\eps}+1\Big)^p
		\leq C\Big(|\xi|^p\eps^{1+\frac{\beta}{2} - p}+\eps^{1+\frac{\beta}{2}}\Big)
	\end{align*}
	for a constant $C>0$ independent of $\eps$. 
	In light of Fr\'ech\'et-Kolmogorov's theorem and the fact that $\beta > 2p-2$, we conclude that $(S_\eps)_\eps$ converges strongly in $L^p(\Omega_1;\R^{2\times 2})$ to a constant rotation $S\in \SO(2)$.
	We find analogously that $(R_\eps)_{\eps}$ converges strongly to some $R\in \SO(2)$ in $L^p(\Omega_1;\R^{2\times 2})$.
	Moreover, we derive from \eqref{scalar_product2} and \eqref{elastic_energy} that
	\begin{align*}
		S_\eps(x) e_1\cdot R_\eps(x)e_1 \geq - C \eps^{\frac{-2+\beta}{2p}} \to 0\quad \text{ as } \eps \to 0
	\end{align*}
	since $\beta>2$, which yields that $Se_1 \cdot Re_1 \geq 0$:
	
	\medskip
	
	\textit{Step 3: Approximation of $u_\eps$ by piecewise affine functions.} 
	We recall $w$ as in \eqref{nablaw}, set $\hat{w}_\eps: \R^2\to \R^2,\ x\mapsto \eps w(\tfrac{x}{\eps}),$ and define
	\begin{align}\label{w_eps_new}
		w_\eps:= \hat{w}_\eps + \int_{\Omega_1\cap\eYstiff}u_\eps-\hat{w}_\eps\dd x.
	\end{align}
	The goal is the prove that $w_\eps$ and $u_\eps$ are close to each other in the $L^p$-sense on a suitable large subset by comparing them on the soft and stiff components separately.
	We first distinguish between two cases: if $\norm{w_\eps - u_\eps}_{L^p(\Omega_1;\R^2)}\to 0$ as $\eps \to 0$, then there is nothing to prove.
	Otherwise we find via Lemma \ref{lem:complement}, applied to $v_\eps = u_\eps - w_\eps$ and $U=\Omega'$, a subset $\Omega_2\Subset\Omega_1$ and $M>0$ such that
	\begin{align}\label{est_Omega'}
		\norm{v_\eps}_{L^p(\Omega_1\cap\eYstiff;\R^2)} \leq M \norm{v_\eps}_{L^p(\Omega_2\cap\eYstiff;\R^2)}.
	\end{align}
	This subset can be chosen as close to $\Omega_1$ as we wish without changing the estimate \eqref{est_Omega'}.
	
	On each $\eps(k+Y_1)$ for $k\in J_{\eps,2}$ we estimate
	\begin{align}\label{u-w_eYrig}
		\norm{\nabla u_\eps-\nabla w_\eps}^p_{L^p(\eps(k+Y_1);\R^{2\times 2})} &= \norm{\nabla u_\eps- S}^p_{L^p(\eps(k+Y_1);\R^{2\times 2})}\nonumber\\
		&\leq C(\norm{\nabla u_\eps- S_\eps}^p_{L^p(\eps(k+Y_1);\R^{2\times 2})}  + \norm{S_\eps- S}^p_{L^p(\eps(k+Y_1);\R^{2\times 2})})\nonumber\\
		&\leq C(\eps\norm{\dist(\nabla u,\SO(2))}_{L^p(E'(\eps,k))}^{\frac{p}{2}} + \norm{S_\eps- S}^p_{L^p(\eps(k+Y_1);\R^{2\times 2})}),
	\end{align}
	and analogously on $\eps(k+Y_3)$.
	A summation of these estimates over all $k\in J_{\eps,2}$ in combination with \eqref{elastic_energy} then produces
	\begin{align}\label{u-w_Omega2}
		\norm{\nabla u_\eps-\nabla w_\eps}^p_{L^p(\Omega_2\cap\eYstiff;\R^{2\times 2})}\leq C(\eps^{1+\frac{\beta}{2}} + \norm{S_\eps- S}^p_{L^p(\Omega_1;\R^{2\times 2})} + \norm{R_\eps- R}^p_{L^p(\Omega_1;\R^{2\times 2})}),
	\end{align}
	where the latter two terms vanish in the limit due to Step 1.
	Now, let $\Omega_3\Subset\Omega_2$ be another arbitrary bounded Lipschitz domain. 
	In light of \eqref{est_Omega'}, we may apply the Poincar\'e-type Lemma \ref{lem:poincare2} to $v=u_\eps - w_\eps$, $U=\Omega_2$, and $U'=\Omega_3$, to obtain
	\begin{align}\label{u-w_Omega''}
		\norm{u_\eps-w_\eps}^p_{L^p(\Omega_3\cap\eYstiff;\R^{2\times 2})}&\leq C\norm{\nabla u_\eps-\nabla w_\eps}^p_{L^p(\Omega_2\cap\eYstiff;\R^{2\times 2})}\nonumber\\
		&\leq C(\eps^{1+\frac{\beta}{2}} + \norm{S_\eps- S}^p_{L^p(\Omega_1;\R^{2\times 2})}+ \norm{R_\eps- R}^p_{L^p(\Omega_1;\R^{2\times 2})}).
	\end{align}
	for a constant $C>0$ independent of $\eps$.
	
	To produce a similar estimate on the soft components, we first choose yet another bounded Lipschitz domain $\Omega_4\Subset\Omega_3$.
	We also observe that $Y_2 \subset Y_1 + \lambda e_2$ if $\lambda\geq\frac{1}{2}$, which allows us to estimate
	\begin{align}\label{est_Y22}
		\int_{\eps(k + Y_2)}|u_\eps - w_\eps|^p \dd x &\leq C\int_{\eps(k + Y_1)}|u_\eps - w_\eps|^p \dd x + C\int_{\eps(k+Y_1)} | (u_\eps - w_\eps)(x) - (u_\eps - w_\eps)(x + \eps e_2)|^p \dd x \nonumber\\
		&\leq C\big(\norm{u_\eps - w_\eps}_{L^p(\eps(k+Y_1);\R^2)}^p + \eps^p\norm{\nabla u_\eps - \nabla w_\eps}_{L^p(\eps(k+Y_1\cup Y_2);\R^{2\times 2})}^p\big)\nonumber\\
		&\leq C\big(\norm{u_\eps - w_\eps}_{L^p(\eps(k+Y_1);\R^2)}^p + \eps^p\norm{\nabla u_\eps}_{L^p(\eps(k+Y_1\cup Y_2);\R^{2\times 2})}^p +\eps^p |\eps(k+Y_1\cup Y_2)|\big);
	\end{align}
	in the case $\lambda<\frac{1}{2}$, it holds that $Y_2 \subset Y_3 - \lambda e_2$, which leads to a similar estimate.
	Analogously, we find that
	\begin{align}\label{est_Y42}
		\int_{\eps(k + Y_4)}|u_\eps  - w_\eps|^p \dd x \leq C\big(\norm{u_\eps - w_\eps}_{L^p(\eps(k+Y_1);\R^2)}^p + \eps^p\norm{\nabla u_\eps}_{L^p(\eps(k+Y_1\cup Y_4);\R^{2\times 2})}^p +\eps^p |\eps(k+Y_1\cup Y_4)|\big)
	\end{align}
	since $Y_4\subset Y_1 + \lambda e_1$ if $\lambda\geq \frac{1}{2}$.
	Summing \eqref{u-w_eYrig}, \eqref{est_Y22}, \eqref{est_Y42} over all $k\in J_{\eps,4}$, and combining the result with \eqref{u-w_Omega''} then yields
	\begin{align*}
		\int_{\Omega_4} |u_\eps -w_\eps|^p \dd x &\leq C\big(\eps^{1+\frac{\beta}{2}} + \norm{S_\eps- S}^p_{L^p(\Omega_1;\R^{2\times 2})} + \norm{R_\eps- R}^p_{L^p(\Omega_1;\R^{2\times 2})} + \eps^p \norm{u_\eps}_{W^{1,p}(\Omega;\R^2)}^p + \eps^p |\Omega|\big)\\
		&\leq C\big(\eps^{1+\frac{\beta}{2}} + \eps^p +\norm{S_\eps- S}^p_{L^p(\Omega_1;\R^{2\times 2})} + \norm{R_\eps- R}^p_{L^p(\Omega_1;\R^{2\times 2})}\big),
	\end{align*}
	with a constant $C>0$ independent of $\eps>0$. This shows that $u_\eps$ and $w_\eps$ have the same limit in $L^p(\Omega_4;\R^2)$. 
	Since $w_\eps$ converges to an affine function $w$ with gradient $\nabla w = F\in K$, cf.~\eqref{Klambda}, the limit function $u$ satisfies $\nabla u = F\in K$ on $\Omega_4$.
	
	An exhaustion argument proves the desired result.
\end{proof}

\subsection{Proof of Theorem \ref{theo:homogenization_elastic_intro}}\label{sec:elastic_hom}

Finally, we give the proof of Theorem \ref{theo:homogenization_elastic_intro}, which consists of verifying the lower and upper bounds for the $\Gamma$-convergence result.

\begin{proof}[Proof of Theorem \ref{theo:homogenization_elastic_intro}]
	\textit{Step 1: The lower bound.} Assume that $(u_\eps)_\eps\subset L^p_0(\Omega;\R^2)$ converges strongly to $u\in L^p_0(\Omega;\R^2)$ and satisfies
	\begin{align*}
		\lim_{\eps\to 0}\Ical_\eps(u_\eps) = \liminf_{\eps\to 0}\Ical_\eps(u_\eps)<\infty;
	\end{align*} 
	in this case, it holds that $(u_\eps)_\eps\subset \Acal$.
	Due to the lower bounds \eqref{Wsoft} and \eqref{Wrig} for $\Wsoft$ and $\Wstiff$, we find that $(\nabla u_\eps)_{\eps}$ is bounded in $L^p(\Omega;\R^2)$.
	A direct application of Poincar\'e's inequality on $\Omega$ yields the boundedness of $(u_\eps)_{\eps}$ in $W^{1,p}(\Omega;\R^2)$ as well as $u_\eps\weakly u$ in $W^{1,p}(\Omega;\R^2)$ up to the selection of a (non-relabeled) subsequence.
	Moreover, Proposition \ref{prop:elastic_compactness} produces that $u$ is affine with gradient $\nabla u = F\in K$, cf.~\eqref{Klambda}, in view of the lower bound \eqref{Wrig}.	
	
	From the proof of Proposition \ref{prop:elastic_compactness}, we recall the five sets $\Omega_4\Subset\ldots\Subset\Omega_1\Subset\Omega_0=\Omega$ together with their index sets $J_{\eps,1},\ldots,J_{\eps,4}$ as in \eqref{index_sets}, the quantities $S_\eps,R_\eps$ as in \eqref{Seps_Reps2} with limits $S,R\in\SO(2)$, as well as $w_\eps$ as in \eqref{w_eps_new}.
	Since $W^\qc$ is polyconvex, there exists a lower semicontinuous and convex function $g:\R^{2\times 2}\times \R\to [0,\infty]$ such that $W^\qc(F) = g(F,\det F)$.	
	In light of Jensen's inequality (see \cite[Lemma A.2]{MVGSN20}) for extended-valued functions, we estimate
	\begin{align}\label{lower_bound1}
		\Ical_\eps(u_\eps) &\geq \int_{\Omega_4\cap\eYsoft}\Wsoft(\nabla u_\eps) \dd x 
		\geq \int_{\Omega_4\cap\eps Y_2}\Wsoft^\qc(\nabla u_\eps) \dd x + \int_{\Omega_4\cap \eps Y_4}\Wsoft^\qc(\nabla u_\eps) \dd x\nonumber\\
		&\geq \sum_{i\in\{2,4\}}|\Omega_4\cap\eps Y_i|\,g\Big(\dashint_{\Omega_4\cap\eps Y_i}(\nabla u_\eps,\det \nabla u_\eps) \dd x \Big).
	\end{align}
	We now want to exchange every $u_\eps$ by the easier piecewise affine function $w_\eps$, which is suitably close to $u_\eps$.
	Since $|\Omega_4\cap\eps Y_i| \to |\Omega_4||Y_i|>0$ as $\eps \to 0$, it remains to show that
	\begin{align}\label{difference_minors}
		\Big|\int_{\Omega_4\cap \eps Y_i} (\nabla u_\eps,\det \nabla u_\eps) - (\nabla w_\eps,\det \nabla w_\eps) \dd x\Big| \to 0
	\end{align}
	as $\eps \to 0$ for $i\in\{2,4\}$.
	
	Lemma \ref{lem:extension2} applied to $U=\Omega_2$ and $U'=\Omega_4$ now generates a linear and continuous operator $L: W^{1,p}(\Omega_2\cap \eYstiff;\R^2)\cap C^0(\overline{\Omega_2\cap\eYstiff};\R^2)\to W^{1,p}(\Omega_4;\R^2)$.
	We then find that we may replace $u_\eps$ and $w_\eps$ in \eqref{difference_minors} by the continuous functions $\tilde{w}_\eps:= L(w_\eps\restrict{\Omega_2\cap \eYstiff})$ and $\tilde{u}_\eps:= L(u_\eps\restrict{\Omega_2\cap \eYstiff})$ since the minors are Null-Lagrangians. 
	For the difference in the gradients, we compute
	\begin{align}\label{diff_gradients}
		\Big|\int_{\Omega_4\cap\eps Y_i} \nabla \tilde{u}_\eps - \nabla \tilde{w}_\eps \dd x\Big| \leq C \norm{\nabla \tilde{u}_\eps - \nabla \tilde{w}_\eps}_{L^{p}(\Omega_4;\R^{2\times 2})} \leq C \norm{u_\eps - w_\eps}_{W^{1,p}(\Omega_2\cap\eYstiff;\R^{2})}
	\end{align}
	with a constant $C>0$ independent of $\eps$ for $i\in\{2,4\}$. 
	As for the determinants, we use that $(w_\eps)_\eps$ and $(u_\eps)_\eps$ are both bounded in $W^{1,p}(\Omega_2;\R^2)$ to estimate
	\begin{align}\label{diff_dets}
		\Big|\int_{\Omega_4\cap \eps Y_i} \det \nabla \tilde{u}_\eps - \det \nabla \tilde{w}_\eps \dd x\Big| &\leq 
		C\Big(\norm{\partial_1 \tilde{w}_\eps}_{L^{p'}(\Omega_4;\R^2)}\norm{\partial_2 \tilde{w}_\eps - \partial_2 \tilde{u}_\eps}_{L^p(\Omega_4;\R^2)} \nonumber\\
		&\qquad\qquad+ \norm{\partial_2 \tilde{u}_\eps}_{L^{p'}(\Omega_4;\R^2)}\norm{\partial_1 \tilde{w}_\eps - \partial_1 \tilde{u}_\eps}_{L^p(\Omega_4;\R^2)}\Big)\nonumber\\
		&\leq C\norm{w_\eps - u_\eps}_{W^{1,p}(\Omega_2\cap\eYstiff;\R^2)}
	\end{align}
	with $\frac{1}{p'} + \frac{1}{p} = 1$ and a constant $C>0$ independent of $\eps$, for $i\in\{2,4\}$.
	By combining \eqref{diff_gradients} and \eqref{diff_dets} with \eqref{u-w_Omega2}, we verify \eqref{difference_minors}.
	
	Considering the definition \eqref{w_eps_new} of $w_\eps$, we can now pass to the limit in \eqref{lower_bound1},
	\begin{align*}
		\liminf_{\eps\to 0}\Ical_\eps(u_\eps) &\geq \sum_{i\in\{2,4\}}|\Omega_4||Y_i|\,g\Big(\liminf_{\eps\to 0}\dashint_{\Omega_4\cap\eps Y_i}(\nabla w_\eps,\det \nabla w_\eps) \dd x \Big)\\
		&\geq |\Omega_4||Y_2| g\big((Se_1|Re_2), Se_1\cdot Re_1\big) + |\Omega_4||Y_4| g\big((Re_1|Se_2), Se_1\cdot Re_1\big)\\
		&= |\Omega_4|\frac{|\Ysoft|}{2}\big(\Wsoft^\qc(Se_1|Re_2) + \Wsoft^\qc(Re_1|Se_2)\big)\geq |\Omega_4| \Whom(F)
	\end{align*}
	with $\nabla u = F\in K$. 
	By taking the supremum over all compactly contained $\Omega_4\Subset\Omega$, we produce the desired estimate.

	\textit{Step 2: The upper bound.} The recovery sequence can be constructed exactly as in the proof of Theorem \ref{theo:homogenization_rigid}.	
\end{proof}

\subsection*{Acknowledgements}
This work was initiated when CK and DE were affiliated with Utrecht University. CK acknowledges partial support by the Dutch Research Council NWO through the project TOP2.17.01 and the Westerdijk Fellowship program.

\bibliographystyle{abbrv}
\bibliography{EDK23}

	\end{document}